\newtheorem{thm}{Theorem}[section]
\newtheorem*{theorem*}{Theorem}
\newtheorem*{acknowledgement*}{Acknowledgement}
\newtheorem{cor}[thm]{Corollary}
\newtheorem{lem}[thm]{Lemma}
\newtheorem{prop}[thm]{Proposition}
\theoremstyle{definition}
\theoremstyle{remark}
\newtheorem{rem}[thm]{Remark}
\numberwithin{equation}{section}
\theoremstyle{definition}
\newtheorem{definition}{Definition}[section]
\theoremstyle{remark}
\newtheorem{remark}{Remark}[section]
\newcommand{\set}[1]{\left\{#1\right\}}
\newcommand{\Real}{\mathbb R}
\newcommand{\dist}[0]{\mathrm{dist}}
\title[Minimal Surfaces and Entropy in $\mathbb{CH}^{n+1}$]{Minimal surfaces and Colding-Minicozzi entropy in complex hyperbolic space }
\author{Jacob Bernstein}
\address{Department of Mathematics, Johns Hopkins University, 3400 N. Charles Street, Baltimore, MD 21218}
\email{bernstein@math.jhu.edu}
\author{Arunima Bhattacharya}
\address{Department of Mathematics, the University of North Carolina at Chapel Hill, Phillips Hall, Chapel Hill, NC 27514 }
\email{arunimab@unc.edu}
\begin{document}

\begin{abstract}
We study notions of asymptotic regularity for a class of minimal submanifolds of complex hyperbolic space that includes minimal Lagrangian submanifolds.  
As an application, we show a relationship between an appropriate formulation of Colding-Minicozzi entropy and a quantity we call the $CR$-volume that is computed from the asymptotic geometry of such submanifolds. 
\end{abstract}
\maketitle

 \section{Introduction}

 In \cite{BernsteinBhattacharya}, the authors generalized the Colding-Minicozzi entropy \cite{Coldinga} to submanifolds of Cartan-Hadamard manifolds. In this article, we study the specific case where the ambient manifold is complex hyperbolic space, $\mathbb{CH}^{n+1}$.  Recall, this is the $(2n+2)$-dimensional complete simply-connected K\"{a}hler-Einstein manifold with metric $g_{\mathbb{CH}}$ which has constant negative holomorphic curvature;  we adopt the convention that the sectional curvatures lie in $[-4,-1]$.   This leads to a relationship between the entropy of certain minimal submanifolds (e.g., minimal Lagrangian submanifolds) and a quantity we call the $CR$-volume which is associated to their asymptotic geometry.  This quantity is an analog, in the context of $CR$-geometry,  of the conformal volume of Li and Yau \cite{LiYau}; it is also related to the visual volume of Gromov \cite{GromovFilling} -- see Definition \ref{CRVol} in Section \ref{CRVolSec} and Section \ref{EntropySec}.

Our study requires appropriate compactifications of $\mathbb{CH}^{n+1}$; we describe two natural choices in Section \ref{CompactSec}.  Both lead to well-defined and equivalent notions of \emph{ideal boundary}, $\partial_\infty \mathbb{CH}^{n+1}$.  Importantly, $\partial_\infty \mathbb{CH}^{n+1}$ comes equipped with a natural $CR$-structure modeled on $\mathbb{S}^{2n+1}$ viewed as the boundary of the unit complex ball $\mathbb{B}_{\mathbb{C}}^{n+1}$, and in fact an equivalence class of Sasaki structures. 
Let $\Sigma\subset \mathbb{CH}^{n+1}$ be a smooth proper minimal submanifold. Such a $\Sigma$ is necessarily non-compact, and therefore a natural assumption is that its asymptotic geometry can be modeled, via the compactifications of $\mathbb{CH}^{n+1}$, on a submanifold, $\partial_\infty \Sigma\subset \partial_\infty \mathbb{CH}^{n+1}$ -- see Section \ref{MinCompactSec} for a detailed discussion.   One may then impose additional conditions on the asymptotic regularity and geometry of $\Sigma$. For instance, $\Sigma$ is \emph{asymptotically horizontal} if $\partial_\infty\Sigma$ is a horizontal submanifold of $\partial_\infty\mathbb{CH}^{n+1}$ and \emph{asymptotically Legendrian} when $\partial_\infty\Sigma$ is Legendrian -- i.e.,  horizontal and of maximal dimension. Indeed, a Lagrangian submanifold of sufficient asymptotic regularity is asymptotically Legendrian -- see Lemma \ref{IsotropIsHorizLem}. 
We refer to Definitions \ref{def_asymreg} and  \ref{def_weakasym} in  Section \ref{CompactSec_1} for specifics.


Our first result is to show that relatively weak assumptions on the asymptotic regularity of certain minimal submanifolds imply stronger asymptotic regularity.
\begin{thm}\label{main1}
	Suppose that $\Sigma\subset \mathbb{CH}^{n+1}$ is an $m$-dimensional  minimal submanifold.  If $\Sigma$ is weakly $C^2$-asymptotically regular and weakly asymptotically horizontal, then  it is $C^1$-asymptotically regular and strongly horizontal.  
\end{thm}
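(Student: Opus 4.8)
The plan is to localize near the ideal boundary and convert the problem into a question about a degenerate-elliptic system. Fixing a compactification as in Section \ref{CompactSec} and a boundary point $p \in \partial_\infty\Sigma$, I would choose a boundary defining function $r$ together with coordinates on $\partial_\infty\mathbb{CH}^{n+1}$ that split the boundary directions into the horizontal (contact) distribution and the transverse Reeb direction, so that the anisotropic Heisenberg scaling of the complex hyperbolic metric near infinity is made explicit. The weak $C^2$-asymptotic regularity then lets one represent $\Sigma$ near $p$ as a normal perturbation of the natural cone-type model over $\partial_\infty\Sigma$, with the perturbation controlled a priori in a suitable weighted sense. In these coordinates the minimality condition becomes a quasilinear, degenerate-elliptic second-order system for the graphing functions, the degeneration at $r=0$ encoding the asymptotically complex hyperbolic geometry.

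The regularity improvement is a bootstrap. First I would use the weak $C^2$ a priori bounds to control the coefficients of the linearization of the minimal surface operator, and then apply interior and boundary elliptic estimates adapted to the degenerate operator --- Schauder or $L^p$ estimates in function spaces matched to the Carnot--Carath\'{e}odory structure of the boundary --- to convert the weak bounds on the perturbation into genuine H\"{o}lder, and then $C^1$, control up to $r=0$. Iterating, the improved solution regularity feeds back into improved coefficient regularity, yielding the $C^1$-asymptotic regularity of Definition \ref{def_asymreg}.

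For strong horizontality I would track the pullback to $\Sigma$ of the suitably normalized contact form $\alpha$, whose vanishing as $r \to 0$ is exactly the horizontal condition. Weak asymptotic horizontality gives that this pullback tends to zero in a weak sense, and the goal is to upgrade this to the sharp decay rate. Here minimality is essential: writing the structure equations for $\alpha|_\Sigma$ and using the minimal surface system, one derives a differential inequality for the vertical defect (the Reeb-component of $T\Sigma$) showing that a nonzero leading-order vertical term would produce nonvanishing mean curvature --- equivalently, that the corresponding indicial root is excluded. A maximum-principle or energy argument, now legitimate because of the $C^1$ control just obtained, then forces strong horizontality.

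The main obstacle is the anisotropy of the geometry at infinity. In contrast to real hyperbolic space, where the conformally Euclidean structure makes the boundary analysis isotropic, the contact structure on $\partial_\infty\mathbb{CH}^{n+1}$ causes horizontal and vertical directions to scale with different powers of the defining function, so the minimal surface operator degenerates in a direction-dependent way and standard Schauder theory does not apply off the shelf. Moreover, regularity and horizontality are coupled: horizontality is precisely what controls the vertical degeneration, so the two conclusions cannot be obtained in strict sequence but must be bootstrapped together, each improvement feeding the other. Managing this coupling --- ensuring the regularity gained is strong enough to justify the horizontality estimate, which is in turn needed to carry the regularity cleanly to the boundary --- is the technical heart of the argument.
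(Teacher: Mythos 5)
Your proposal takes a genuinely different route from the paper, but as written it has a gap that goes to the heart of what the theorem actually asserts. The conclusion ``$C^1$-asymptotically regular'' refers to the \emph{Bergman} compactification (Definition \ref{def_asymreg}), while the hypothesis ``weakly $C^2$-asymptotically regular'' refers to the \emph{modified} Bergman compactification (Definition \ref{def_weakasym}); the two differ by the map $\mathcal{S}$, which is only $\frac{1}{2}$-H\"older at $\partial \mathbb{B}^{2n+2}$. By Lemma \ref{WeakAsy2StrongAsyLem}, passing from the weak to the strong notion is not a question of gaining derivatives at all --- the compactified surface is already $C^2$ up to the boundary by hypothesis --- but of showing that $\tilde{\Sigma}_p$ meets $\partial \mathbb{B}^{2n+2}$ \emph{orthogonally} (asymptotic quasi-normality). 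A bootstrap via weighted Schauder or $L^p$ estimates adapted to the Carnot--Carath\'eodory structure improves differentiability of a solution; it cannot by itself output the geometric conclusion that $\mathbf{X}^\perp=\mathbf{0}$ along $\partial \tilde{\Sigma}_p$. Your proposal never identifies orthogonal intersection as the thing to be proved, so the first half of the conclusion is not actually reached by the scheme you describe.

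What the paper does instead is entirely finite-dimensional at the boundary: it writes $g_{\tilde{B}}$ as a rank-one deformation of the Poincar\'e metric by the one-form dual to $\mathbf{T}$, computes the mean curvature in $g_{\tilde{B}}$ in terms of Euclidean data (Proposition \ref{2ndFFMCProp}), and evaluates the resulting identities along $\partial \tilde{\Sigma}_p$, using the assumed $C^2$ regularity only to Taylor expand. Minimality then forces, in order: $\mathbf{T}^\top=\mathbf{0}$ on the boundary (from the identity $4(m+2)|\mathbf{T}^\top|_\Real^2\, g_{\Real}(\mathbf{T}^\top,\mathbf{X})=O(1-s)$); the vanishing of the component $Z_1$ of $\mathbf{X}^\top$ from Lemma \ref{BoundaryTtanLem}; $\mathbf{X}^\perp=\mathbf{0}$, i.e.\ orthogonality and hence $C^1$-regularity via Lemma \ref{WeakAsy2StrongAsyLem}; and finally the cubic $(m+2)\alpha^3+(m+4)\alpha=0$ for $\alpha=g_{\Real}(\mathbf{T},\mathbf{A}_{\tilde{\Sigma}}^\Real(\mathbf{X}^\top,\mathbf{X}^\top))$, whose only real root is $\alpha=0$, giving strong horizontality. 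Your intuition for the second half --- that a nonvanishing leading-order Reeb component of $T\Sigma$ is incompatible with $\mathbf{H}=\mathbf{0}$, i.e.\ that the offending indicial root is excluded --- is the correct germ of this step, but the exclusion is not a generic indicial computation: it rests on the specific signs in that cubic. The coupling between regularity and horizontality that you flag as the technical heart is resolved in the paper not by a simultaneous PDE bootstrap but by this explicit chain of algebraic identities, each step of which uses only the $C^2$ hypothesis already in hand.
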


Using this, we obtain a relationship between the generalized Colding-Minicozzi entropy of these submanifolds and the $CR$-volume of their asymptotic boundaries.  

\begin{definition}[$\kappa$-entropy \cite{BernsteinBhattacharya}]
     Suppose that $\Sigma$ is an $m$-dimensional submanifold of the $(m+k)$-dimensional Cartan-Hadamard manifold $(M,g)$. Let
$$
\Phi_{m, \kappa}^{t_0,x_0}(t,x)=K_{m,\kappa}(t_0-t, \dist_{g}(x,x_0))
$$ where $K_{m,\kappa}$ are functions defined in \cite[pg 2]{BernsteinBhattacharya} following \cite{Coldinga, BernsteinHypEntropy} and is related to the heat kernel of $\mathbb{H}^m$.
The Colding-Minicozzi $\kappa$-entropy of $\Sigma$ in $(M,g)$ is given by
$$
\lambda_g^\kappa[\Sigma]=\sup_{x_0\in M, \tau>0}\int_{\Sigma} \Phi_{m, \kappa}^{0, x_0} (-\tau, x) dVol_{\Sigma}(x)=\sup_{x_0\in M, \tau>0}\int_{\Sigma} \Phi_{m, \kappa}^{\tau, x_0} (0, x) dVol_{\Sigma}(x).
$$
\end{definition}
When $\kappa=0$ and $(M,g)=(\Real^{m+k}, g_{\Real})$ is Euclidean space, this is the usual Colding-Minicozzi entropy, $\lambda[\Sigma]$, of $\Sigma$. When $\kappa=1$ and $(M,g)=(\mathbb{H}^{m+k}, g_{\mathbb{H}})$ is hyperbolic space, this is the entropy in hyperbolic space, $\lambda_{\mathbb{H}}[\Sigma]$, introduced in \cite{BernsteinHypEntropy}.   By \cite{BernsteinBhattacharya},  $\lambda_g^\kappa$ is monotone non-increasing along reasonable mean curvature flows in a Cartan-Hadamard manifold with sectional curvatures bounded above by $-\kappa^2$.  As $\mathbb{CH}^{n+1}$ has sectional curvatures in $[-4,-1]$, we make the following definition.

\begin{definition}[Complex hyperbolic entropy]
    The Colding-Minicozzi entropy of an $m$-dimensional submanifold $\Sigma\subset \mathbb{CH}^{n+1}$ is given by
$$
\lambda_{\mathbb{CH}}[\Sigma]=\lambda_{g_{\mathbb{CH}}}^1[\Sigma]=\sup_{x_0\in \mathbb{CH}^{n+1}, \tau>0} \int_{\Sigma} \Phi_{m, 1}^{\tau, x_0} (0, x) dV_\Sigma(x)
$$
where $\lambda_{g_{\mathbb{CH}}}^1$ is the $\kappa$-entropy on $\mathbb{CH}^{n+1}$ corresponding to an upper bound on sectional curvatures of $-1$.
\end{definition}

\begin{thm}\label{main2}
If $\Sigma \subset \mathbb{CH}^{n+1}$ is an $m$-dimensional submanifold that is weakly $C^1$-asymptotically regular and weakly asymptotically horizontal, then
$$
|\mathbb{S}^{m-1}|_{\Real} \lambda_{\mathbb{CH}}[\Sigma]\geq \lambda_{CR}[\partial_\infty \Sigma].
$$
If $\Sigma$ is also weakly $C^2$-asymptotically regular and minimal, then equality holds.
\end{thm}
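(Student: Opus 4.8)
The plan is to read the complex hyperbolic entropy as a weighted area that, in the limit as the base point $x_0$ tends to the ideal boundary, localizes on the end of $\Sigma$ and reproduces the $CR$-volume of $\partial_\infty\Sigma$, up to the universal factor $|\mathbb{S}^{m-1}|_{\Real}$. The guiding model is the Euclidean computation in which the Gaussian density of a minimal cone at its vertex equals the volume of its link divided by $|\mathbb{S}^{m-1}|_{\Real}$; here the ``link at infinity'' is $\partial_\infty\Sigma$, and the supremum over $CR$-automorphisms in $\lambda_{CR}$ (Definition \ref{CRVol}) plays the role of the conformal-volume supremum of Li and Yau.

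First I would recall that $\lambda_{CR}[\partial_\infty\Sigma]$ is a supremum of the induced volume of $\partial_\infty\Sigma$ over the action of the $CR$-automorphism group of $\partial_\infty\mathbb{CH}^{n+1}\cong\mathbb{S}^{2n+1}$, and that this group is realized by the boundary traces of the isometry group of $\mathbb{CH}^{n+1}$. Fixing such an isometry $F$ with induced boundary map $\phi$, I would choose a one-parameter family of base points $x_0(s)\to\xi\in\partial_\infty\mathbb{CH}^{n+1}$ together with a compatibly rescaled $\tau(s)$ that jointly encode $\phi$ in the limit, and analyze $\int_\Sigma \Phi_{m,1}^{\tau(s),x_0(s)}(0,x)\,dV_\Sigma(x)$ along this family. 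Using weak $C^1$-asymptotic regularity to control $\Sigma$ near $\xi$, and writing the radial weight $K_{m,1}(\tau(s),\dist_{g}(x,x_0(s)))$ in geodesic polar coordinates about $x_0(s)$, the integral concentrates on the portion of $\Sigma$ approaching $\xi$. The radial part of $K_{m,1}$ integrates, exactly as in the hyperbolic entropy computation of \cite{BernsteinHypEntropy}, to the reciprocal sphere factor $|\mathbb{S}^{m-1}|_{\Real}^{-1}$, while the transverse part converges to the volume element of $\phi_*(\partial_\infty\Sigma)$ in the Sasaki/$CR$ metric. Hence the limit equals $|\mathbb{S}^{m-1}|_{\Real}^{-1}\,\mathrm{Vol}(\phi_*(\partial_\infty\Sigma))$. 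Since $\lambda_{\mathbb{CH}}[\Sigma]$ is a supremum over all base points and scales, it dominates every such limit, and taking the supremum over $F$ yields $|\mathbb{S}^{m-1}|_{\Real}\,\lambda_{\mathbb{CH}}[\Sigma]\geq\lambda_{CR}[\partial_\infty\Sigma]$.

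For equality I would add the hypotheses of minimality and weak $C^2$-asymptotic regularity; by Theorem \ref{main1} these promote $\Sigma$ to being $C^1$-asymptotically regular and strongly horizontal, which is precisely the uniformity needed to make the expansion above converge without error. The reverse inequality then follows from a monotonicity formula: minimality ($\mathbf{H}=0$) makes the weighted integral $I(x_0,\tau)=\int_\Sigma\Phi_{m,1}^{\tau,x_0}(0,x)\,dV_\Sigma(x)$ monotone as $(x_0,\tau)$ degenerates toward the ideal boundary, so that its value at any finite interior configuration is bounded above by its boundary limit. Consequently the supremum defining $\lambda_{\mathbb{CH}}[\Sigma]$ is attained exactly in the boundary limits computed above, giving $|\mathbb{S}^{m-1}|_{\Real}\,\lambda_{\mathbb{CH}}[\Sigma]\leq\lambda_{CR}[\partial_\infty\Sigma]$ and hence equality.

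The main obstacle is the equality direction, and within it the monotonicity formula. One must identify the correct drift and monotone quantity associated to the weight $K_{m,1}$ on $\mathbb{CH}^{n+1}$ -- the analog of Huisken's monotonicity adapted to this Cartan-Hadamard setting -- and verify that for minimal $\Sigma$ its boundary value is precisely the $CR$-volume contribution with no loss. Controlling the resulting error terms near $\partial_\infty\mathbb{CH}^{n+1}$ is where the $C^2$-asymptotic regularity, together with the strong horizontality supplied by Theorem \ref{main1}, is essential: it is needed both to guarantee convergence of the weighted integrals and to rule out any interior configuration of $(x_0,\tau)$ exceeding the boundary supremum.
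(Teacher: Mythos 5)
Your overall architecture coincides with the paper's: localize the entropy integrand on the end of $\Sigma$, integrate the radial part of $K_{m,1}$ against $\sinh^{m-1}$ to produce the factor $|\mathbb{S}^{m-1}|_{\Real}^{-1}$ (exactly the computation quoted from \cite[Proposition 4.2]{BernsteinHypEntropy}), identify the transverse part with the boundary volume, and, for equality, invoke Theorem \ref{main1} to upgrade the regularity and then the Huisken-type monotonicity of \cite{BernsteinBhattacharya} applied to $\Sigma$ as a static mean curvature flow. The second half of your argument is essentially the paper's proof of the equality case; note only that the monotonicity theorem requires a volume-growth hypothesis, which the paper verifies (exponential volume growth, via Lemma \ref{BoundaryLem} and the coarea formula), and that under weak $C^1$-regularity alone the slice comparison of Lemma \ref{BoundaryLem} gives only a one-sided $\liminf$ inequality, which is why the lower bound holds in general while the limit exists only after the regularity upgrade.

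The step that would fail as written is your mechanism for realizing the supremum over $Aut_{CR}(\mathbb{S}^{2n+1})$ in the lower bound. A fixed CR-automorphism $\phi=\Psi_A\circ\Psi_{\mathbf{b}}$ corresponds to a fixed \emph{interior} point $\mathbf{b}\in\mathbb{B}^{n+1}_{\mathbb{C}}$, equivalently to the choice of interior base point $p_0$ whose Bergman compactification $\Upsilon_{p_0}$ sends $p_0$ to $\mathbf{0}$; the quantity $Vol_{\partial_\infty \mathbb{CH}}(\partial_\infty\Sigma;p_0)=|\Psi(\Upsilon(\partial_\infty\Sigma))|_{\mathbb{S}}$ already sweeps out the full $Aut_{CR}$-orbit as $p_0$ ranges over $\mathbb{CH}^{n+1}$. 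The correct degeneration is therefore to hold $x_0=p_0$ fixed in the interior and send only $\tau\to\infty$ (i.e., $t\to-\infty$); this is what Proposition \ref{LimitProp} does, after which one takes the supremum over $p_0$. Your proposal to send $x_0(s)\to\xi\in\partial_\infty\mathbb{CH}^{n+1}$ corresponds to $|\mathbf{b}(s)|\to 1$, i.e., to a \emph{degenerating} sequence of automorphisms that collapses $\mathbb{S}^{2n+1}$ minus a point, and the associated limits would not recover $|\phi(\partial_\infty\Sigma)|_{\mathbb{S}}$ for any fixed nontrivial $\phi$. The fix is small but necessary: replace the boundary degeneration of the base point by the interior parametrization of $Aut_{CR}$, with the supremum over $p_0\in\mathbb{CH}^{n+1}$ taken at the end.
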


\begin{remark}
In \cite[Theorem 1.5]{BernsteinHypEntropy}, an analogous result for appropriate submanifolds of hyperbolic space was obtained: If $\Sigma$ is an $m$-dimensional submanifold of $\mathbb{H}^{m+k}$ that is regular up to the ideal boundary, then there is an inequality relating, $\lambda_{c}[\partial_\infty\Sigma]$, the conformal volume of the ideal boundary of $\Sigma$ and $\lambda_{\mathbb{H}}[\Sigma]$, the Colding-Minicozzi entropy of $\Sigma$ in hyperbolic space.  This was applied in \cite{JYaoRelArea, JYaoMP} to show a type of topological uniqueness for certain minimal hypersurfaces in $\mathbb{H}^{n+1}$. 
\end{remark}

%
We conclude by discussing the applicability of Theorem \ref{main2}.  The most basic examples are the totally geodesic Lagrangian submanifolds, $\Sigma=\Sigma_{p,L}$, that, for any $p\in \mathbb{CH}^{n+1}$ and Lagrangian subspace $L \subset T_p \mathbb{CH}^{n+1}$,  are uniquely determined by $L=T_p \Sigma$.  These $\Sigma$ are  $C^\infty$-asymptotically regular, strongly horizontal with $\partial_\infty \Sigma$ corresponding, modulo a $CR$-automorphism,  to a totally geodesic Legendrian sphere $\mathbb{S}^n\subset \mathbb{S}^{2n+1}$.  They also satisfy $\lambda_{\mathbb{CH}}[\Sigma]=1$. 

The fact that each $\Sigma_{p,L}$ is intrinsically, $\mathbb{H}^{n+1}$, the hyperbolic space with curvature $-1$, yields more examples.  Indeed,  work of Anderson \cite{andersonCompleteMinimalVarieties1982,andersonCompleteMinimalHypersurfaces1983a}, Hardt-Lin \cite{hardtRegularityInfinityAbsolutely1987}, Lin \cite{linDirichletProblemMinimal1989}, and Tonegawa \cite{tonegawaExistenceRegularityConstant1996}, give many minimal hypersurfaces in $\mathbb{H}^{n+1}$ with good asymptotic regularity.  This is done by solving an asymptotic Plateau problem for any $C^2$-regular hypersurface in $\partial_\infty \mathbb{H}^{n+1}$; note that, as observed in \cite{tonegawaExistenceRegularityConstant1996}, higher asymptotic regularity is a subtle issue. See the survey of Coskunuzer \cite{coskunuzerAsymptoticPlateauProblem2013} for a thorough overview.  Embedding these into $\Sigma_{p,L}$ yields weakly $C^2$-asymptotically regular and asymptotically horizontal $n$-dimensional minimal submanifolds in $\mathbb{CH}^{n+1}$.

In another direction, in \cite[Theorem 1]{castroMinimalLagrangianSubmanifolds2002}, the authors show the existence of a family of rotationally symmetric minimal Lagrangian submanifolds of $\mathbb{CH}^{n+1}$ that are topologically  $\mathbb{R}\times \mathbb{S}^n$.  One verifies that, when $n$ is even, they are $C^\infty$-asymptotically regular while, when $n$ is odd, they are only $C^{2}$-asymptotically regular.  In all dimensions, these submanifolds are weakly $C^\infty$-asymptotically regular and strongly horizontal.  Moreover, their ideal boundary corresponds to a pair of totally geodesic Legendrian spheres $\mathbb{S}^n$ contained in $\mathbb{S}^{2n+1}$.   There seems to be few other constructions of smooth minimal Lagrangian submanifolds in $\mathbb{CH}^{n+1}$ -- see however \cite{loftinMinimalLagrangianSurfaces2013}.  If one allows (interior) singularities then, by taking a cone, any minimal Legendrian in $\mathbb{S}^{2n+1}$ gives rise to a (singular) minimal Lagrangian submanifold of $\mathbb{CH}^{n+1}$ -- see Corollary \ref{MinimalConeLem}.  There are many examples of such $\Gamma$ in $\mathbb{S}^{2n+1}$ -- for instance,  the so-called Clifford tori \cite[Ex. 3.16]{Haskins} -- see also \cite{HaskinsKapouleas2}.  In $\mathbb{S}^5$ there is a particularly rich collection of examples, including those of higher genus -- see \cite{CarberryMcIntosh, McIntosh, HaskinsKapouleas}.  Note that Theorem \ref{main2} holds for minimal submanifolds with interior singularities.

Finally, for any submanifold $\Gamma\subset \partial_\infty \mathbb{CH}^{n+1}$, and in particular any horizontal submanifold, it follows from \cite[Theorem 4.3]{bangertTrappingQuasiminimizingSubmanifolds1996} that there is a (singular) area-minimizing current asymptotic to $\Gamma$ in a certain, weak, sense.  To the authors' knowledge,  additional asymptotic regularity has not been established in $\mathbb{CH}^{n+1}$ for the solutions of \cite{bangertTrappingQuasiminimizingSubmanifolds1996}, especially for those of high codimension -- cf.  \cite{linAsymptoticBehaviorAreaminimizing1989}, which shows asymptotic regularity for high-codimension minimizers in $\mathbb{H}^{n+1}$.  It would be interesting to produce solutions with sufficient asymptotic regularity in this fashion.

\subsection*{Acknowledgments}
The first author was partially supported by the NSF Grant DMS-1904674 and  DMS-2203132 and the Institute for Advanced Study with funding provided by the Charles Simonyi Endowment.
The second author acknowledges the support of the AMS-Simons Travel Grant and funding provided by the Bill Guthridge Distinguished Professorship Fund. 

\section{Geometric background}\label{sec_basics}


We recall some geometric backgrounds and carry out some basic computations.  In particular, we recall the natural contact, Sasaki, and CR structures that come from viewing $\mathbb{S}^{2n+1}\subset \Real^{2n+2}\simeq \mathbb{C}^{n+1}$  as the boundary of the unit ball $\mathbb{B}^{2n+2}\simeq \mathbb{B}_{\mathbb{C}}^{n+1}$.  This is important as these structures naturally arise on the ideal boundary of complex hyperbolic space.  Our main source is \cite{BlairBookNew, Tanno1989}. 

\subsection{Basic constructs on Euclidean space}

Let us first introduce notation for various structures on  $\Real^{2n+2}\simeq \Real^{n+1}_{\mathbf{x}}\times \Real^{n+1}_{\mathbf{y}}\simeq \mathbb{C}^{n+1}_{\mathbf{z}}$.  Here we make the identification for $(\mathbf{x}, \mathbf{y})\in  \Real^{n+1}\times  \Real^{n+1}$ with $\mathbb{C}^{n+1}$ via
$$
\mathbf{z}=\mathbf{x}+i \mathbf{y}.
$$
In particular, the Euclidean coordinates given by $x_1, y_1, \ldots, x_{n+1},  y_{n+1}$, correspond to the holomorphic coordinates $z_1=x_1+i y_1, \ldots, z_{n+1}=x_{n+1}+iy_{n+1}$.

Denote the usual Euclidean Riemannian metric and symplectic form by
\begin{align*} g_{\Real}=\sum_{j=1}^{n+1} \left( dx_j^2+dy_j^2\right)\mbox{ and } \omega_{\Real}=\sum_{j=1}^{n+1} dx_j\wedge dy_j.
 \end{align*}
Let $J_{\Real}$ be the associated almost complex structure on $\Real^{2n+2}$  defined by
$$
\omega_{\Real}(X,Y)=g_{\Real}(X,J_{\Real}(Y))
$$
so
$$
J_{\Real}\left(\frac{\partial}{\partial x_j }\right)=-\frac{\partial}{\partial y_j} \mbox{ and } J_{\Real}\left(\frac{\partial}{\partial y_j}\right)=\frac{\partial}{\partial x_j}.
$$
In this convention the complexification of $J_{\Real}$ satisfies $J_{\Real}\left(\frac{\partial}{\partial z_j}\right)=-i \frac{\partial}{\partial z_j}$.
Likewise, 
$$
dx_j\circ J_{\Real}=dy_j \mbox{ and } dy_j \circ J_{\Real} =-dx_j.
$$

Let $r:\Real^{2n+2}\to \Real$ be the radial function defined by
$$
r^2=x_1^2+y_1^2+\cdots+x_{n+1}^2+y_{n+1}^2=|\mathbf{x}|^2+|\mathbf{y}|^2=|\mathbf{z}|^2.
$$
One readily computes that  
$$
dr=\frac{1}{r} \left(x_1 dx_1 +y_1 dy_1+\cdots +x_{n+1} dx_{n+1}+y_{n+1} dy_{n+1}\right)
$$
is a smooth one-form on $\Real^{2n+2}\setminus \set{0}$. 
Set
$$
\theta= \frac{1}{r} dr \circ J_{\mathbb{R}}=\frac{1}{r^2} \left(x_1 dy_1 -y_1 dx_1+\cdots +x_{n+1} dy_{n+1}-y_{n+1} dy_{n+1}\right).
$$
Define a symmetric $(0,2)$ tensor field on $\Real^{2n+2}\setminus \set{0}$ by, 
\begin{align*}
\eta&=\frac{1}{r^2}\sum_{j, k=1, j\neq k}^{n+1} \left( (x_j x_k +y_j y_k) (dx_j dx_k+dy_j dy_k) +2 x_j y_k (dx_j dy_k -dy_j dx_k)\right).
	\end{align*}
One readily computes that
$$
g_{\Real}=dr^2+r^2\left( \theta^2+\eta\right) \mbox{ and } \omega_{\Real}= \frac{1}{2}r^2d\theta + r dr \wedge \theta.
$$
Hence, for $X,Y\in T_p \Real^{2n+2}$, 
\begin{equation} \label{LeviFormEqn}
 -\frac{1}{2} d\theta(X, J_{\Real}(Y))= -\frac{1}{r^2}\omega_{\Real}(X, J_{\Real}(Y))+ \frac{1}{r} (dr \wedge \theta)(X, J_{\Real}(Y))= \eta(X,Y).
\end{equation}
It is convenient to introduce vector fields
$$
\mathbf{X} =\sum_{j=1}^{n+1} \left(x_j  \frac{\partial}{\partial x_j}+y_j  \frac{\partial}{\partial y_j}\right) \mbox{ and }\mathbf{T}= \sum_{j=1}^{n+1} \left(x_j  \frac{\partial}{\partial y_j}-y_j  \frac{\partial}{\partial x_j}\right) =-J_{\Real}(\mathbf{X})
$$
where $\mathbf{X}$ is the position vector and $\mathbf{T}$ is a Killing vector field. 
They satisfy
$$
dr(X)= \frac{1}{r}g_{\Real}(X, \mathbf{X}) \mbox{ and } \theta(X)=\frac{1}{r^2} g_{\Real}(X, \mathbf{T}).
$$

One readily checks that if $X$ is any vector field on $\Real^{2n+2}$ and $\nabla^{\Real}$ is the Levi-Civita connection of $g_{\Real}$, then, because $J_{\Real}$ is $\nabla^\Real$ parallel, one has
$$
\nabla_X^{\Real} \mathbf{X}= X \mbox{ and }\nabla_{X}^\Real \mathbf{T}=\nabla_{X}^\Real \left(-J_{\Real}(\mathbf{X})\right)=-J_{\Real}(\nabla_X^\Real \mathbf{X})=-J_{\Real}(X).
$$
\subsection{Contact, Sasaki and CR geometry of $\mathbb{S}^{2n+1}$}
By thinking  of $\mathbb{S}^{2n+1}$ as the boundary of the ball $\mathbb{B}^{2n+2}$ we may endow it with a natural Sasaki structure.  This comes along with associated CR and contact structures.  In order to understand the boundary behavior of complex hyperbolic space it will be helpful to understand the interaction of these structures as well as their symmetries.
 
To that end, let $\hat{\theta}$ be the pullback of $\theta$ to $\mathbb{S}^{2n+1}$. This is readily seen to be a contact form on $\mathbb{S}^{2n+1}$ with Reeb vector field $\hat{\mathbf{T}}$, the restriction of the tangential vector field $\mathbf{T}$.  Denote by $\mathcal{H}\subset T\mathbb{S}^{2n+1}$ the contact distribution associated to $\hat{\theta}$, that is, the vector bundle over $\mathbb{S}^{2n+1}$ satisfying,  for each $p\in \mathbb{S}^{2n+1}$, 
\begin{equation}
\mathcal{H}_p=\ker \hat{\theta}_p= \set{X\in T_p \mathbb{S}^{2n+1}: \hat{\theta}_p(X)=0}\subset T_p \mathbb{S}^{2n+1}. \label{Hp}
\end{equation}

Let $g_{\mathbb{S}}$ denote the round metric on $\mathbb{S}^{2n+1}$ induced from $g_{\Real}$.  It is clear that $\mathcal{H}$ is $g_{\mathbb{S}}$ orthogonal to $\hat{\mathbf{T}}$.  It follows that $J_{\mathcal{H}}=J_{\Real}|_\mathcal{H}$ is a bundle automorphism of $\mathcal{H}$.  We may extend this to a bundle map $J_{\mathbb{S}}: T\mathbb{S}^{2n+1}\to T\mathbb{S}^{2n+1}$ by setting $J_{\mathbb{S}}(\hat{\mathbf{T}})=\mathbf{0}$.  

The triple $(J_{\mathbb{S}}, \hat{\mathbf{T}}, \hat{\theta})$ is, in the language of \cite{BlairBookNew}, an \emph{almost contact structure} on $\mathbb{S}^{2n+1}$.  In fact, together with the metric $g_{\mathbb{S}}$ this is an \emph{almost contact metric structure} and this almost contact metric structure is also \emph{Sasakian}. Indeed, by \cite[Theorem 6.3]{BlairBookNew}, if  $\nabla^{\mathbb{S}}$ is the Levi-Civita connection of $g_{\mathbb{S}}$, then it suffices to check
$$
(\nabla^{\mathbb{S}}_X  J_{\mathbb{S}})  Y= g_{\mathbb{S}}(X,Y) \hat{\mathbf{T}} -\hat{\theta}(Y) X,
$$
 holds for $X,Y\in T_p \mathbb{S}^{n+1}$.
This follows from $\nabla^\Real_X J_{\Real}=0$. Hence, for $X\in T_p \mathbb{S}^{2n+1}$, 
$$
\nabla_X^{\mathbb{S}} \hat{\mathbf{T}}= -J_{\mathbb{S}}(X).
$$

In a similar vein, by using the identification $\mathbb{S}^{2n+1}\simeq \partial \mathbb{B}_{\mathbb{C}}^{n+1}$ where 
$$
\mathbb{B}^{n+1}_{\mathbb{C}}=\set{(z_1, \ldots, z_{n+1}): |z_1|^2+\ldots +|z_{n+1}|^2<1}\subset \mathbb{C}^{n+1},
$$
is the unit complex ball, 
one may interpret $(\mathcal{H}, J_{\mathbb{S}})$ as a CR-structure.  In this case, $\hat{\theta}$ is a  pseudo-convex pseudo-hermitian form, as the \emph{Levi form}, $L_{\hat{\theta}}$, is a positive definite inner product on $\mathcal{H}$ -- see \cite{dragomirDifferentialGeometryAnalysis2006}.  Indeed, for $X,Y\in \mathcal{H}_p$, it is given\footnote{  we use the convention that $
	d\beta(X,Y)=  X(\beta(Y))-Y(\beta(X))-\beta([X,Y])$, though $
	d\beta(X,Y)= \frac{1}{2} \left( X(\beta(Y))-Y(\beta(X))-\beta([X,Y])\right)
	$ is also common in the literature}
	by 
$$
L_{\hat{\theta}}(X,Y)=-\frac{1}{2}(d\hat{\theta})(X, J_{\mathbb{S}}(Y))=-\frac{1}{2}d\theta(X,J_{\Real}(Y))={\eta}(X,Y)
$$
where we used that $X$ and $Y$ are in the kernel of $dr$ and $\theta$.  Hence, $L_{\hat{\theta}}=\hat{\eta}$ where $\hat{\eta}$ is the pullback of $\eta$ to $\mathbb{S}^{2n+1}$. Moreover, the \emph{Webster metric} associated to this data on $\mathbb{S}^{2n+1}$ recovers the standard metric, i.e., 
$$
\hat{\theta}^2+L_{\hat{\theta}}=g_{\mathbb{S}}.
$$

Given a $C^1$ function defined on $\mathbb{S}^{2n+1}$ we denote
$$
\nabla^{\mathcal{H}} f= \nabla^{\mathbb{S}} f - g_{\mathbb{S}} (\nabla^{\mathbb{S}}f, \hat{\mathbf{T}})\hat{\mathbf{T}}
$$
where $\mathcal{H}$ is the tangential component of the gradient.

\subsection{Complex automorphisms of unit ball in $\mathbb{C}^{n+1}$ and CR-automorphisms of $\mathbb{S}^{2n+1}$}
We denote by $Aut_{\mathbb{C}}(\mathbb{B}_{\mathbb{C}}^{n+1})$ the set of biholomorphic automorphisms of the unit disk.  We refer to \cite{RudinDiskBook} and \cite{GoldmanCHBook} for properties of these automorphisms, but summarize some of the needed facts.

First of all, we observe that for any $A\in \mathbf{U}(n+1)$, a unitary matrix, the map
$$
\Phi_{A}: \mathbf{z}\mapsto A\cdot \mathbf{z}
$$
is an element of $Aut_{\mathbb{C}}(\mathbb{B}_{\mathbb{C}}^{n+1})$.  Secondly, for any fixed $\mathbf{b}\in \mathbb{B}^{n+1}_{\mathbb{C}}$ there is an element $\Phi_{\mathbf{b}}\in Aut_{\mathbb{C}}(\mathbb{B}_{\mathbb{C}}^{n+1})$ given by
\begin{align*}
\Phi_{\mathbf{b}}:\mathbf{z}\mapsto \sqrt{1-|\mathbf{b}|^2} \frac{\mathbf{z}}{1+ \bar{\mathbf{b}}\cdot \mathbf{z}}+\frac{1}{1+\sqrt{1-|\mathbf{b}|^2}}\left(1+\frac{\sqrt{1-|\mathbf{b}|^2}}{1+\bar{\mathbf{b}}\cdot \mathbf{z}} \right) \mathbf{b}.
\end{align*}
This map can also be expressed as
\begin{align*}
\Phi_{\mathbf{b}}(\mathbf{z})	&=\sqrt{1-|\mathbf{b}|^2} \frac{\mathbf{z}+\mathbf{b}}{1+ \bar{\mathbf{b}}\cdot \mathbf{z}}+\frac{1}{1+\sqrt{1-|\mathbf{b}|^2}}\frac{|\mathbf{b}|^2+\bar{\mathbf{b}}\cdot {\mathbf{z}}}{1+\bar{\mathbf{b}}\cdot \mathbf{z}}  \mathbf{b}.\\
\end{align*}
Using the identification $\mathbb{B}_{\mathbb{C}}^{n+1} \simeq \mathbb{B}^{2n+2}$ we may also think  of $\Phi_{\mathbf{b}}$ as a self-diffeomorphism of $\mathbb{B}^{2n+2}$ that is a $J_{\Real}$-holomorphic automorphism of $\mathbb{B}^{2n+2}$, in the sense that
$$
J_{\Real}\circ D_p \Phi =D_p\Phi\circ J_{\Real}.
$$
We identify  $Aut_{\mathbb{C}}(\mathbb{B}_{\mathbb{C}}^{n+1})$ with $Aut_{J}(\mathbb{B}^{2n+2})$, the set of $J_{\Real}$-holomorphic automorphisms of the disk.

\begin{prop}\label{PhibProp}
 For $\mathbf{b}=\mathbf{b}_1+i \mathbf{b}_2\in \mathbb{B}_{\mathbb{C}}^{n+1}$, the map $\Phi_{\mathbf{b}}$ satisfies:
 \begin{enumerate}
 	\item It extends to a $J_{\Real}$-holomorphic diffeomorphism  $\bar{\Phi}_{\mathbf{b}}:\bar{\mathbb{B}}^{2n+2}\to \bar{\mathbb{B}}^{2n+2}$.
 	\item On $\partial \mathbb{B}^{2n+2}$, 
 	$$
 	\bar{\Phi}_{\mathbf{b}}^* dr= W_{\mathbf{b}} dr \mbox{ and } \bar{\Phi}_{\mathbf{b}}^* \theta= W_{\mathbf{b}} \theta$$
 	where
 	$$ 
 	W_{\mathbf{b}}(\mathbf{x}, \mathbf{y})= \frac{1-|\mathbf{b}_1|^2-|\mathbf{b}_2|^2}{(1+\mathbf{b}_1\cdot \mathbf{x}+\mathbf{b}_2\cdot \mathbf{y})^2+(\mathbf{b}_2\cdot \mathbf{x}-\mathbf{b}_1\cdot \mathbf{y})^2}.
 	$$
 	\item On $\partial \mathbb{B}^{2n+2}$,
 	$$
 	\bar{\Phi}_{\mathbf{b}}^* d \theta = W_{\mathbf{b}} d\theta+ 2 (W_{\mathbf{b}}-W_{\mathbf{b}}^2)r dr \wedge \theta+ dW_{\mathbf{b}}\wedge \theta+dr\wedge (dW_{\mathbf{b}}\circ J_{\Real}).
 	$$
 \end{enumerate}
\end{prop}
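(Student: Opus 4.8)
The plan is to reduce all three parts to a single transformation identity for the defining function $\rho = 1 - r^2$ of the ball, namely
$$
\bar\Phi_{\mathbf{b}}^*\rho = W_{\mathbf{b}}\,\rho,
$$
viewed as an identity of functions on a neighborhood of $\bar{\mathbb{B}}^{2n+2}$. This is exactly the standard automorphism identity of Rudin \cite{RudinDiskBook} (in his notation $1 - |\varphi_a(\mathbf{z})|^2 = (1-|a|^2)(1-|\mathbf{z}|^2)|1 - \langle \mathbf{z}, a\rangle|^{-2}$), rewritten in the present sign conventions; expanding $|1 + \bar{\mathbf{b}}\cdot\mathbf{z}|^2$ in the real coordinates $\mathbf{x},\mathbf{y}$ recovers the stated formula for $W_{\mathbf{b}}$. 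For (1) I would first note that $|\bar{\mathbf{b}}\cdot\mathbf{z}| \le |\mathbf{b}| < 1$ on $\bar{\mathbb{B}}^{2n+2}$ by Cauchy--Schwarz, so the denominator $1 + \bar{\mathbf{b}}\cdot\mathbf{z}$ is non-vanishing there and $\Phi_{\mathbf{b}}$ is holomorphic, hence $J_{\Real}$-holomorphic, on a neighborhood of the closed ball; the identity above shows $|\Phi_{\mathbf{b}}(\mathbf{z})| \le 1$ with equality exactly when $|\mathbf{z}| = 1$, so $\bar\Phi_{\mathbf{b}}$ preserves both $\mathbb{B}^{2n+2}$ and $\partial\mathbb{B}^{2n+2}$, and it is a diffeomorphism because these maps form a group with inverses of the same type -- all standard, see \cite{RudinDiskBook, GoldmanCHBook}.

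For (2), differentiate the identity to get $\bar\Phi_{\mathbf{b}}^* d\rho = W_{\mathbf{b}}\,d\rho + \rho\,dW_{\mathbf{b}}$, which on $\partial\mathbb{B}^{2n+2}$ (where $\rho = 0$) reduces to $\bar\Phi_{\mathbf{b}}^* d\rho = W_{\mathbf{b}}\,d\rho$. Writing $d\rho = -2r\,dr$ and using that $\bar\Phi_{\mathbf{b}}^* r \equiv 1$ on the sphere (as $\bar\Phi_{\mathbf{b}}$ preserves it) immediately yields $\bar\Phi_{\mathbf{b}}^* dr = W_{\mathbf{b}}\,dr$. For the $\theta$-statement, the one extra ingredient is that $J_{\Real}$-holomorphicity makes pullback commute with precomposition by $J_{\Real}$, i.e. $\bar\Phi_{\mathbf{b}}^*(\alpha\circ J_{\Real}) = (\bar\Phi_{\mathbf{b}}^*\alpha)\circ J_{\Real}$. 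Combining this with the identity $d\rho\circ J_{\Real} = -2r^2\theta$ (immediate from $\theta = \tfrac1r dr\circ J_{\Real}$) and the already-computed $\bar\Phi_{\mathbf{b}}^* d\rho$, then restricting to the sphere and again using $\bar\Phi_{\mathbf{b}}^* r^2 \equiv 1$, gives $\bar\Phi_{\mathbf{b}}^*\theta = W_{\mathbf{b}}\theta$.

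Part (3) is where the real care is needed, and it is the main obstacle: one cannot simply apply $d$ to the boundary relation $\bar\Phi_{\mathbf{b}}^*\theta = W_{\mathbf{b}}\theta$, because the correction terms that vanish on the sphere still contribute through their transverse derivatives. I would therefore retain the global expression
$$
\bar\Phi_{\mathbf{b}}^*\theta = \frac{W_{\mathbf{b}}\,r^2}{F}\,\theta - \frac{\rho}{2F}\,(dW_{\mathbf{b}}\circ J_{\Real}), \qquad F := \bar\Phi_{\mathbf{b}}^* r^2 = 1 - W_{\mathbf{b}}\rho,
$$
which is valid on a neighborhood of the closed ball and follows from the computation in (2), apply $d$, and only then restrict to $\partial\mathbb{B}^{2n+2}$. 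On the sphere $\rho = 0$, $r = 1$, $F = 1$, $d\rho = -2\,dr$, and $dF = 2W_{\mathbf{b}}\,dr$; feeding these into the product-rule expansion, the $\rho/(2F)$ term contributes only $dr\wedge(dW_{\mathbf{b}}\circ J_{\Real})$ (its coefficient vanishes on the sphere while its differential does not), and the first term contributes $W_{\mathbf{b}}\,d\theta + dW_{\mathbf{b}}\wedge\theta + 2(W_{\mathbf{b}} - W_{\mathbf{b}}^2)\,dr\wedge\theta$ after evaluating $d(W_{\mathbf{b}} r^2/F)$ at the boundary. Assembling these four pieces, and noting $dr\wedge\theta = r\,dr\wedge\theta$ on the sphere, produces exactly the stated formula. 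The bookkeeping of the normal-derivative contributions -- in particular tracking $dF$ and the boundary value of $d(W_{\mathbf{b}} r^2/F)$ -- is the only delicate point; everything else is a routine consequence of the Rudin identity and the holomorphicity of $\Phi_{\mathbf{b}}$.
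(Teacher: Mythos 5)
Your proposal is correct and follows essentially the same route as the paper: both rest on the norm identity $r^2\circ\bar{\Phi}_{\mathbf{b}}=1-(1-r^2)W_{\mathbf{b}}$, differentiate it and use $J_{\Real}$-holomorphicity to pull back $r^2\theta=r\,dr\circ J_{\Real}$, and for part (3) retain the global (interior) expression, apply $d$, and only then restrict to $\partial\mathbb{B}^{2n+2}$ so that the transverse derivatives of the terms vanishing on the sphere are captured. The only cosmetic difference is that you divide by $F=\bar{\Phi}_{\mathbf{b}}^*r^2$ before differentiating, whereas the paper differentiates $\bar{\Phi}_{\mathbf{b}}^*(r^2\theta)$ and accounts for the factor of $r^2$ at the end; the bookkeeping is equivalent.
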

\begin{proof}
	The fact that $\Phi_{\mathbf{b}}$ extends smoothly can be readily seen from the formula and it is clear that the extended map is $J_{\Real}$-holomorphic and smoothly invertible. A straightforward computation gives
\begin{equation}\label{NormPhibEq}
	|\bar{\Phi}_{\mathbf{b}}(\mathbf{z})|^2=1-\frac{(1-|\mathbf{b}|^2)(1-|\mathbf{z}|^2)}{|1+\bar{\mathbf{b}}\cdot \mathbf{z}|^2}.
\end{equation}
	With $\mathbf{z}=\mathbf{x}+i\mathbf{y}$, it is convenient to write
	$$
	W_{\mathbf{b}}(\mathbf{z})=(1-|\mathbf{b}|^2)|1+\bar{\mathbf{b}}\cdot \mathbf{z}|^{-2}=W_{\mathbf{b}}(\mathbf{x}, \mathbf{y}).
	$$
	By inspection, $W_{\mathbf{b}}$ is a smooth and non-zero function on $\bar{\mathbb{B}}^{2n+2}$.
We write \eqref{NormPhibEq} as
	$$
	r^2\circ \bar{\Phi}_{\mathbf{b}}=1-(1-r^2)W_{\mathbf{b}}.
	$$
	Hence,
	\begin{align*}
		\left(2r\circ \bar{\Phi}_{\mathbf{b}}\right)\bar{\Phi}_{\mathbf{b}}^*dr &=\bar{\Phi}_{\mathbf{b}}^*(2r dr) =\bar{\Phi}_{\mathbf{b}}^*d(r^2)=d(r^2\circ\bar{\Phi}_{\mathbf{b}})\\
		& =d( 1-(1-r^2)W_{\mathbf{b}} )= -(1-r^2) dW_{\mathbf{b}}+2W_{\mathbf{b}} rdr.
	\end{align*}
  Combining this with the $J_{\Real}$-holomorphicity of $\bar{\Phi}_{\mathbf{b}}$ yields 
	\begin{align*}
		\bar{\Phi}_{\mathbf{b}}^*(r^2\theta) &= \bar{\Phi}_{\mathbf{b}}^*\left( rdr \circ J_{\Real}\right) =(r\circ \bar{\Phi}_{\mathbf{b}}) ( \bar{\Phi}_{\mathbf{b}}^* dr) \circ J_{\Real}= W_{\mathbf{b}}  r^2\theta-\frac{1-r^2}{2}dW_{\mathbf{b}}\circ J_{\Real}.
	\end{align*}
The above computations combined imply the second claim.
We further compute, 
	\begin{align*}
		\bar{\Phi}_{\mathbf{b}}^* r^2d \theta &= \bar{\Phi}_{\mathbf{b}}^* \left(d( r^2  \theta)-2rdr\wedge \theta\right) =d(\bar{\Phi}_{\mathbf{b}}^*r^2 \theta)- 2\bar{\Phi}_{\mathbf{b}}^* dr \wedge \bar{\Phi}_{\mathbf{b}}^*(r\theta)\\
		&= d W_\mathbf{b} \wedge r^2 \theta+W_\mathbf{b} d(r^2 \theta) +rdr\wedge dW_\mathbf{b} \circ J_{\Real} -\frac{1-r^2}{2} d(dW_\mathbf{b}\circ J_{\Real})\\
		&-2W_{\mathbf{b}}^2 \frac{r^2}{r^2\circ \bar{\Phi}_{\mathbf{b}}} rdr \wedge \theta+\frac{r(1-r^2)}{r\circ \bar{\Phi}_{\mathbf{b}}}\left( dW_{\mathbf{b}}\wedge r \theta+dr\wedge dW_{\mathbf{b}}\circ J_{\Real}\right)\\
		&-\frac{(1-r^2)^2}{2 r^2\circ  \bar{\Phi}_{\mathbf{b}}}dW_{\mathbf{b}}\wedge  dW_{\mathbf{b}}\circ J_{\Real}.
	\end{align*}
	On $\partial \mathbb{B}^{2n+2}$, this simplifies to
	\begin{align*}
		\bar{\Phi}_{\mathbf{b}}^*d \theta =W_{\mathbf{b}} d\theta+ 2 (W_{\mathbf{b}}-W_{\mathbf{b}}^2)r dr \wedge \theta+ dW_{\mathbf{b}}\wedge \theta+dr\wedge (dW_{\mathbf{b}}\circ J_{\Real}),
	\end{align*}
	which verifies the third claim.

\end{proof}

Every element $\Phi\in Aut_{\mathbb{C}}(\mathbb{B}_{\mathbb{C}}^{n+1})$ satisfies 
\begin{equation}\label{StructureAutCEqn}
	\Phi=\Phi_A\circ \Phi_{\mathbf{b}}
\end{equation}
 for some $A\in \mathbf{U}(n+1)$ and $\mathbf{b}\in \mathbb{B}_{\mathbb{C}}^{n+1}$. 
It follows from \eqref{StructureAutCEqn} and Proposition \ref{PhibProp} that every element  $\Phi\in Aut_{J}(\mathbb{B}^{2n+2})$ extends smoothly to a map $\bar{\Phi}: \bar{\mathbb{B}}^{2n+2}\to \bar{\mathbb{B}}^{2n+2}$.  We write $Aut_{J}(\bar{\mathbb{B}}^{2n+2})$ for the group of extended maps. The maps $\bar{\Phi}\in Aut_{J}(\bar{\mathbb{B}}^{2n+2})$ have the additional property that $\Psi=\bar{\Phi}|_{\mathbb{S}^{2n+1}}$ is a diffeomorphism of $\mathbb{S}^{2n+1}$ to itself, which is a \emph{CR-automorphism} of $\mathbb{S}^{2n+1}$, that is,
$$
D_p{\Psi}(\mathcal{H}_p)=\mathcal{H}_{{\Psi}(p)}  \mbox{ and }
J_{\mathbb{S}}\circ D_p{\Psi}=D{\Psi}_p \circ J_{\mathbb{S}}.
$$
Let us denote the set of such maps  by $Aut_{CR}(\mathbb{S}^{2n+1})$.  For $\mathbf{b}=\mathbf{b}_1+i \mathbf{b}_2\in \mathbb{B}_{\mathbb{C}}^{n+1}$, let $\Psi_{\mathbf{b}}\in Aut_{CR}(\mathbb{S}^{2n+1})$ be,  the restriction of $\bar{\Phi}_{\mathbf{b}}\in Aut_J(\mathbb{B}^{2n+2})$.  For $A\in \mathbf{U}(n+1)$ define $\Psi_A\in Aut_{CR}(\mathbb{S}^{2n+1})$ in the same manner.

\begin{prop}\label{CRAutomorphismsThm}
The following properties of $Aut_{CR}(\mathbb{S}^{2n+1})$ hold:
\begin{enumerate}
	\item Every element of $Aut_{CR}(\mathbb{S}^{2n+1})$ is the restriction of a unique element of $Aut_{J}(\bar{\mathbb{B}}^{2n+2})$ and in particular, for every $\Psi\in Aut_{CR}(\mathbb{S}^{2n+1})$ there is an $A\in \mathbf{U}(n+1)$ and $\mathbf{b}=\mathbf{b}_1+i \mathbf{b}_2$ such that $\Psi=\Psi_A\circ \Psi_{\mathbf{b}}$.
	\item The elements $\Psi\in Aut_{CR}(\mathbb{S}^{2n+1})$ are contactomorphisms, in fact for $\Psi=\Psi_A\circ \Psi_{\mathbf{b}}$ one has ${\Psi}^* \hat{\theta}=W_{\mathbf{b}} \hat{\theta}$.
	\item An  element $\Psi=\Psi_A\circ \Psi_{\mathbf{b}}\in Aut_{CR}(\mathbb{S}^{2n+1})$ acts on the metric $g_{\mathbb{S}}$ by
\begin{align*}
	{\Psi}^* g_{\mathbb{S}}&= W_{\mathbf{b}}\left( g_{\mathbb{S}} + \hat{\omega}_{\mathbf{b}} \cdot \hat{\theta}+ \hat{\theta}\cdot \hat{\omega}_{\mathbf{b}}+ \partial_r \log W_{\mathbf{b}} \hat{\theta}^2 \right)\\
	&= W_{\mathbf{b}}\left( \hat{\eta}+ \hat{\omega}_{\mathbf{b}} \cdot \hat{\theta}+ \hat{\theta}\cdot \hat{\omega}_{\mathbf{b}}+ \left( W_{\mathbf{b}} +\frac{1}{4}|\nabla^{\mathcal{H}}\log W_{\mathbf{b}}|^2_{\mathbb{S}}\right) \hat{\theta}^2\right)
\end{align*}
	where $\hat{\omega}_{\mathbf{b}}$ is a one form on $\mathbb{S}^{2n+1}$ given by
	$$
	\hat{\omega}_{\mathbf{b}}=\frac{1}{2} d\log W_{\mathbf{b}}\circ J_{\mathbb{S}}=-\frac{1}{2}g_{\mathbb{S}}(J_{\mathbb{S}}(\nabla^{\mathcal{H}}W_{\mathbf{b}}), \cdot).
	$$
\end{enumerate}
\end{prop}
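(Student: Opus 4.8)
The plan is to establish the three claims in order, reducing each to the building block $\Psi_{\mathbf{b}}$ by exploiting that every $\Psi_A$ with $A\in\mathbf{U}(n+1)$ is an isometry of $g_{\mathbb{S}}$ which preserves $r$ and $J_{\Real}$, hence also $\theta$ and $\eta$. For (1), existence of the factorization is immediate from the definition of $Aut_{CR}(\mathbb{S}^{2n+1})$ together with \eqref{StructureAutCEqn}: writing $\Psi=\bar\Phi|_{\mathbb{S}^{2n+1}}$ and $\bar\Phi=\Phi_A\circ\Phi_{\mathbf{b}}$ gives $\Psi=\Psi_A\circ\Psi_{\mathbf{b}}$. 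For uniqueness I would use that a holomorphic self-map of $\mathbb{B}^{2n+2}$ continuous up to the boundary is determined by its boundary values: if $\bar\Phi_1,\bar\Phi_2\in Aut_J(\bar{\mathbb{B}}^{2n+2})$ share the same restriction, then each component of $\bar\Phi_1-\bar\Phi_2$ is holomorphic on the ball, continuous on its closure, and vanishes on $\mathbb{S}^{2n+1}$, so it vanishes identically by the maximum principle; thus the restriction map is injective, and being surjective by definition, a bijection. For (2), since $A$ preserves $r$ and commutes with $J_{\Real}$ it fixes $\theta=\tfrac1r\,dr\circ J_{\Real}$, so $\Psi_A^*\hat\theta=\hat\theta$; combined with $\bar\Phi_{\mathbf{b}}^*\theta=W_{\mathbf{b}}\theta$ on $\partial\mathbb{B}^{2n+2}$ from Proposition~\ref{PhibProp}(2), restriction to $T\mathbb{S}^{2n+1}$ and composition give $\Psi^*\hat\theta=\Psi_{\mathbf{b}}^*\Psi_A^*\hat\theta=\Psi_{\mathbf{b}}^*\hat\theta=W_{\mathbf{b}}\hat\theta$, and as $W_{\mathbf{b}}>0$ this shows $\Psi$ is a contactomorphism.

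Claim (3) is the heart of the matter. Again reducing to $\Psi=\Psi_{\mathbf{b}}$ via $\Psi_A^*g_{\mathbb{S}}=g_{\mathbb{S}}$, and using that $\bar\Phi_{\mathbf{b}}$ maps the sphere to itself, I would compute $\Psi_{\mathbf{b}}^*g_{\mathbb{S}}$ as the restriction to $T\mathbb{S}^{2n+1}$ of $\bar\Phi_{\mathbf{b}}^*g_{\Real}$. From $g_{\Real}=dr^2+r^2\theta^2+r^2\eta$ and Proposition~\ref{PhibProp}(2), on the sphere $\bar\Phi_{\mathbf{b}}^*g_{\Real}=W_{\mathbf{b}}^2\,dr^2+W_{\mathbf{b}}^2\theta^2+\bar\Phi_{\mathbf{b}}^*\eta$, and the $dr^2$ term drops out upon restriction, so everything reduces to computing $\bar\Phi_{\mathbf{b}}^*\eta$. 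The crucial move is to rewrite $\eta$ through \eqref{LeviFormEqn} as $\eta(X,Y)=-\tfrac12 d\theta(X,J_{\Real}Y)$ and then use that $\bar\Phi_{\mathbf{b}}$ is genuinely $J_{\Real}$-holomorphic on the ambient space to commute $J_{\Real}$ through its differential, giving
\[
\bar\Phi_{\mathbf{b}}^*\eta(X,Y)=-\tfrac12\bigl(\bar\Phi_{\mathbf{b}}^*d\theta\bigr)(X,J_{\Real}Y),
\]
into which I substitute the explicit formula for $\bar\Phi_{\mathbf{b}}^*d\theta$ from Proposition~\ref{PhibProp}(3). Evaluating term by term on $X,Y\in T\mathbb{S}^{2n+1}$ by means of the sphere identity $J_{\Real}Y=J_{\mathbb{S}}Y+\hat\theta(Y)\mathbf{X}$ together with $\iota_{\mathbf{X}}d\theta=0$, $dr(\mathbf{X})=1$, and $\theta(\mathbf{X})=0$: the $W_{\mathbf{b}}d\theta$ term reproduces $W_{\mathbf{b}}\hat\eta$; the $r\,dr\wedge\theta$ term and the radial ($\mathbf{X}$-derivative) parts of the last two terms assemble the coefficient of $\hat\theta^2$; and the horizontal parts $dW_{\mathbf{b}}(J_{\mathbb{S}}\,\cdot\,)$ of the remaining two terms give the symmetric cross terms, once one notes $dW_{\mathbf{b}}(J_{\mathbb{S}}\,\cdot\,)=2W_{\mathbf{b}}\hat\omega_{\mathbf{b}}$. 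Adding back $W_{\mathbf{b}}^2\theta^2$ and using $g_{\mathbb{S}}=\hat\theta^2+\hat\eta$ produces the first displayed formula.

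The main obstacle, and the reason for passing to the ambient $J_{\Real}$, is a genuine subtlety: although $\Psi_{\mathbf{b}}$ is a CR-automorphism, it preserves $J_{\mathbb{S}}$ only on $\mathcal{H}$ and does not fix the Reeb field $\hat{\mathbf{T}}$ (indeed $\Psi_{\mathbf{b}}^*\hat\theta=W_{\mathbf{b}}\hat\theta$). Consequently the naive intrinsic identity $\Psi_{\mathbf{b}}^*\hat\eta=-\tfrac12(\Psi_{\mathbf{b}}^*d\hat\theta)(\cdot,J_{\mathbb{S}}\cdot)$ fails and would yield a non-symmetric tensor; only the ambient computation, where $\bar\Phi_{\mathbf{b}}$ commutes with $J_{\Real}$ \emph{everywhere}, is correct, and this mismatch between $J_{\Real}$ and $J_{\mathbb{S}}$ off $\mathcal{H}$ is exactly what generates the terms $\hat\omega_{\mathbf{b}}\cdot\hat\theta+\hat\theta\cdot\hat\omega_{\mathbf{b}}$. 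The careful bookkeeping of the radial versus horizontal components of $J_{\Real}Y$ is where the real work lies.

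Finally, to pass from the first displayed formula to the second I would verify the pointwise identity $1+\partial_r\log W_{\mathbf{b}}=W_{\mathbf{b}}+\tfrac14|\nabla^{\mathcal{H}}\log W_{\mathbf{b}}|^2_{\mathbb{S}}$ on $\mathbb{S}^{2n+1}$. Setting $u=1+\bar{\mathbf{b}}\cdot\mathbf{z}$, so that $W_{\mathbf{b}}=(1-|\mathbf{b}|^2)|u|^{-2}$, a direct calculation gives $\partial_r\log W_{\mathbf{b}}=\mathbf{X}(\log W_{\mathbf{b}})=-2+2\re u/|u|^2$, while splitting the ambient gradient into its radial, Reeb, and horizontal parts yields $\tfrac14|\nabla^{\mathcal{H}}\log W_{\mathbf{b}}|^2_{\mathbb{S}}=(|\mathbf{b}|^2-1)/|u|^2-1+2\re u/|u|^2$; substituting and using $(\re u)^2+(\im u)^2=|u|^2$ closes the identity. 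This last step is routine but computational.
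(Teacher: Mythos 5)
Your handling of claims (2) and (3) is correct and follows essentially the same route as the paper. Claim (2) is verbatim the paper's argument. For claim (3), the paper likewise obtains $\Psi_{\mathbf{b}}^*\hat{\eta}$ by pulling $\eta(X,Y)=-\tfrac{1}{2}d\theta(X,J_{\Real}Y)$ back through the $J_{\Real}$-holomorphic map $\bar{\Phi}_{\mathbf{b}}$, substituting the formula for $\bar{\Phi}_{\mathbf{b}}^*d\theta$ from Proposition \ref{PhibProp}, splitting $\hat{i}^*(dW_{\mathbf{b}}\circ J_{\Real})$ into $\partial_r W_{\mathbf{b}}\,\hat{\theta}+d\hat{W}_{\mathbf{b}}\circ J_{\mathbb{S}}$, and closing with the pointwise identity $\partial_r W_{\mathbf{b}}=W_{\mathbf{b}}^2-W_{\mathbf{b}}+\tfrac{1}{4}|\nabla^{\mathcal{H}}W_{\mathbf{b}}|^2/W_{\mathbf{b}}$ on the sphere, which is equivalent to the identity you verify (and your computation of it checks out). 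Your remark about why the intrinsic Levi-form identity cannot be pulled back off $\mathcal{H}$ is apt and explains the origin of the cross terms.

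The one genuine gap is in claim (1). The set $Aut_{CR}(\mathbb{S}^{2n+1})$ consists of all diffeomorphisms of the sphere preserving $\mathcal{H}$ and commuting with $J_{\mathbb{S}}$; it is not defined as the image of the restriction map from $Aut_{J}(\bar{\mathbb{B}}^{2n+2})$. The content of (1) is precisely that every such abstract CR-automorphism arises as the boundary value of a biholomorphism of the ball, so your phrase ``being surjective by definition'' assumes the conclusion. This extension statement is a nontrivial theorem of Chern--Moser/Webster type, which the paper does not prove but invokes by citing \cite{PaulYangEtAl} and \cite{chernRealHypersurfacesComplex1974}; it does not follow from the maximum principle. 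Your uniqueness argument --- two elements of $Aut_{J}(\bar{\mathbb{B}}^{2n+2})$ agreeing on $\mathbb{S}^{2n+1}$ coincide, since each component of their difference is holomorphic on the ball, continuous on its closure, and vanishes on the boundary --- is fine and does establish the ``unique'' part of the claim.
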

\begin{rem}
Clearly,  $(\Psi^* g_{\mathbb{S}})|_{\mathcal{H}}= W_{\mathbf{b}} g_{\mathbb{S}}|_{ \mathcal{H}}$ for some $\mathbf{b}$ and so on the horizontal distribution the elements of $Aut_{CR}(\mathbb{S}^{2n+1})$ act conformally.  The deformations producing $g_{\mathbb{S}}^{\mathbf{b}}=\Psi^*_{\mathbb{b}} g_{\mathbb{S}}$ are special cases of more general deformations of contact Riemannian manifolds studied by Tanno in \cite[Section 9]{Tanno1989} where they were called \emph{gauge transformations of contact Riemannian structures}.
\end{rem}
\begin{proof}
The first claim is a standard fact -- see \cite[Lemma 1.1]{PaulYangEtAl} and \cite{chernRealHypersurfacesComplex1974}.  The second  is an immediate consequence of Proposition \ref{PhibProp} and the fact that $\Psi_A^*\hat{\theta}=\hat{\theta}$.

By \eqref{PhibProp}, on $\mathbb{S}^{2n+1}=\partial \mathbb{B}^{2n+2}$,  one has
\begin{align*}
	 \Psi_{\mathbf{b}}^* \eta &= 		
		 W_{\mathbf{b}} \eta +(W_{\mathbf{b}}-W_{\mathbf{b}}^2 ) (dr^2+\theta^2)+\frac{1}{2}d W_{\mathbf{b}} \cdot dr \\
		 &+\frac{1}{2} \theta\cdot(d W_{\mathbf{b}} \circ J_{\Real}) +\frac{1}{2} dr \cdot d W_{\mathbf{b}}+\frac{1}{2}(d W_{\mathbf{b}} \circ J_{\Real})\cdot \theta.
\end{align*}
Hence, if $\hat{i}:\mathbb{S}^{2n+1}\to \Real^{2n+1}$ is the usual inclusion, then $\hat{\eta}=\hat{i}^* \eta$ and
\begin{align*}
\Psi_{\mathbf{b}}^* \hat{\eta}&= W_{\mathbf{b}} \hat{\eta}+(W_{\mathbf{b}}-W_{\mathbf{b}}^2 )\hat{\theta}^2+ \frac{1}{2}\hat{\theta} \cdot \hat{i}^*(dW_{\mathbf{b}} \circ J_{\Real}) + \frac{1}{2} \hat{i}^*(dW_{\mathbf{b}} \circ J_{\Real})\cdot \hat{\theta}\\
&= W_{\mathbf{b}} \hat{\eta}+(W_{\mathbf{b}}-W_{\mathbf{b}}^2+\frac{\partial W_{\mathbf{b}} }{\partial r} )\hat{\theta}^2+ \frac{1}{2}\hat{\theta} \cdot d\hat{W}_{\mathbf{b}} \circ J_{\mathbb{S}}) + \frac{1}{2} (d\hat{W}_{\mathbf{b}} \circ J_{\mathbb{S}})\cdot \hat{\theta}
\end{align*} 
where $\hat{W}_{\mathbf{b}}=W_{\mathbf{b}}\circ \hat{i}$.
As $g_{\mathbb{S}}=\hat{\theta}^2+ \hat{\eta}$, it follows that
$$
{\Psi}_{\mathbb{b}}^*g_{\mathbb{S}}= W_{\mathbf{b}} \left( g_{\mathbb{S}}+  \hat{\omega}_{\mathbf{b}} \cdot \hat{\theta}+\hat{\theta}\cdot \hat{\omega}_{\mathbf{b}}+\partial_r \log W_{\mathbf{b}}\hat{\theta}^2 \right).
$$
A straightforward computation shows that the following identity holds on $\mathbb{S}^{2n+1}$
\begin{align*}
	\partial_r W_{\mathbf{b}}= W_{\mathbf{b}}^2-W_\mathbf{b} +\frac{1}{4}\frac{|\nabla^{\mathcal{H}} W_{\mathbf{b}}|^2}{W_{\mathbf{b}}}.
\end{align*}
The final claim then follows from the above computations, the first claim, and the fact that $\Psi_A$ is a $g_{\mathbb{S}}$-isometry.
\end{proof}

\subsection{CR-Volume}\label{CRVolSec}
We introduce the $CR$-volume for horizontal and Legendrian submanifolds of $\mathbb{S}^{2n+1}$ -- we establish some basic properties of this functional in \cite{BernBhattCR}.  Recall, if $\Gamma\subset \mathbb{S}^{2n+1}$ is an $m$-dimensional submanifold, then $\Gamma$ is \emph{horizontal} if $T_p \Sigma \subset \mathcal{H}_p$ for all $p\in \Sigma$, where $\mathcal{H}_p$ is as defined in (\ref{Hp}). The properties of CR-manifolds ensure that $m\leq n$.  When $m=n$ we say that $\Gamma$ is \emph{Legendrian}.

\begin{definition} \label{CRVol}
The \emph{CR-volume} of $\Gamma\subset \mathbb{S}^{2n+1}$,  an $m$-dimensional horizontal submanifold, is:
$$
\lambda_{CR}[\Gamma]=\sup_{{\Psi}\in Aut_{CR}(\mathbb{S}^{2n+1})}|{\Psi}(\Gamma)|_{\mathbb{S}}.
$$
  \end{definition}

Observe that elements ${\Psi}\in Aut_{CR}(\mathbb{S}^{2n+1})$ can be factored as $
\Psi=\Psi_{A}\circ\Psi_\mathbf{b}
$
where $A\in \mathbf{U}(n+1)$ and $\mathbf{b}\in  \mathbb{B}_{\mathbb{C}}^{n+1}$. Using the facts that ${\Psi}_{A}$ is an isometry of $g_{\mathbb{S}}$, $\Gamma$ is horizontal, along with Proposition \ref{CRAutomorphismsThm} we obtain
\begin{align*}
	\lambda_{CR}[\Gamma]&=\sup_{\mathbf{b}\in \mathbb{B}_{\mathbb{C}}^{n+1}} |\Psi_{\mathbf{b}}(\Gamma)|_{\mathbb{S}}=\sup_{\mathbf{b}\in \mathbb{B}_{\mathbb{C}}^{n+1}} \int_{\Gamma} W_{\mathbf{b}}^{m/2}(p) dV_{\Gamma}(p)\\
	&=\sup_{\mathbf{b}\in \mathbb{B}_{\mathbb{C}}^{n+1}} \int_{\Gamma} \frac{(1-|\mathbf{b}|^2)^{m/2}}{|1+\bar{\mathbf{b}}\cdot \mathbf{z}(p)|^{m}} dV_{\Gamma}(p).
\end{align*}

\section{Complex hyperbolic space and its compactifications}\label{CompactSec}
In this section, we establish some properties of complex hyperbolic space, $\mathbb{CH}^{n+1}$, and its submanifolds.  Recall,  $\mathbb{CH}^{n+1}$ is the  simply connected (complex) space form with constant holomorphic sectional curvature.  We think of it as a $(2n+2)$-dimensional  K\"ahler-Einstein manifold with Riemannian metric $g_{\mathbb{CH}}$ and the integrable  (almost) complex structure $J_{\mathbb{CH}}$. We refer to \cite{GoldmanCHBook} for background on this space, though, however, unlike in \cite{GoldmanCHBook}, we adopt the convention that the sectional curvatures of $g_{\mathbb{CH}}$ lie between $-4$ and $-1$. 

\subsection{Bergman metric and the Bergman compactification of submanifolds}\label{CompactSec_1}

A standard model of complex hyperbolic space is the \emph{Bergman model}. Here the underlying manifold is $\mathbb{B}^{2n+2}\simeq \mathbb{B}_{\mathbb{C}}^{n+1}$ with metric
\begin{align*}
g_{B}&= \frac{1}{1-r^2} \sum_{j=1}^{n+1} dz_j d\bar{z}_j +\frac{1}{(1-r^2)^2} \sum_{j,k=1}^{n+1} z_j \bar{z}_k dz_k d\bar{z}_j \\
&=
\frac{1}{(1-r^2)^2} dr^2+\frac{r^2}{(1-r^2)^2} \theta^2 + \frac{r^2}{1-r^2} \eta.
\end{align*}
One of the advantages of this model is that in it the complex structure, $J_B$, satisfies $J_B=J_{\Real}$. In particular,  $Aut_{g_B}^+(\mathbb{B}^{2n+2})$, the orientation preserving isometries of $g_B$, is identified with $Aut_{J}(\mathbb{B}^{2n+2})$. Finally, the corresponding symplectic form is 
$$
\omega_{B}=\frac{r^2}{1-r^2} \frac{1}{2} d\theta+\frac{1}{(1-r^2)^2} r dr \wedge \theta=\frac{1}{1-r^2} \omega_\Real+\frac{r^2}{(1-r^2)^2} r dr \wedge \theta.
$$

For any $p\in \mathbb{CH}^{n+1}$ there is a diffeomorphism $\Upsilon_{p}: \mathbb{CH}^{n+1}\to \mathbb{B}^{2n+2}$ such that $\Upsilon_{p}(p)=\mathbf{0}$ and $\Upsilon_{p}^* g_B=g_{\mathbb{CH}}$.  Moreover, this map is a biholomorphism and is unique up to post-composition with an element of $\mathbf{U}(n+1)$.    By making the identification with $\bar{\mathbb{B}}^{2n+2}$, this leads naturally to a compactification, $\overline{\mathbb{CH}}^{n+1}$, of complex hyperbolic space, which we call a \emph{Bergman compactification}.  Observe, in this case,  the \emph{ideal boundary} $\partial_\infty \mathbb{CH}^{n+1}$ is identified with $\mathbb{S}^{2n+1}=\partial \mathbb{B}^{2n+2}$.  Different choices of $p$ and the corresponding $\Upsilon_p$ give different compactifications, but they induce equivalent structures as manifolds with boundary.  They also endow $\partial_{\infty} \mathbb{CH}^{n+1}$ with a canonical $CR$-structure, though only an equivalence class of Sasaki structures. 

We use Bergman compactifications to define asymptotic properties of submanifolds $\Sigma \subset \mathbb{CH}^{n+1}$.  To that end,  for an $m$-dimensional submanifold $\Sigma \subset \mathbb{CH}^{n+1}$, a point $p\in \mathbb{CH}^{n+1}$, and a Bergman compactification $\Upsilon_p: \mathbb{CH}^{n+1}\to \mathbb{B}^{2n+2}$, let
$$
\Sigma_p= \overline{\Upsilon_p(\Sigma)}\subset \bar{\mathbb{B}}^{2n+2}.
$$
We call $\Sigma_p$ a \emph{Bergman compactification} of $\Sigma$. 
\begin{definition} \label{def_asymreg}
    Suppose that $\Sigma$ and $\Sigma_p$ are as above:
    \begin{enumerate}
    	\item For any $l\geq 1$, $\Sigma$ is \emph{$C^l$-asymptotically regular} if $\Sigma_p$ is a $C^l$-regular submanifold with boundary and $\Sigma_p$ meets $\partial \mathbb{B}^{2n+2}$ transversally;
    	\item If, in addition, $\partial \Sigma_p$ is a horizontal submanifold relative to the usual contact structure on $\mathbb{S}^{2n+1}=\partial \mathbb{B}^{2n+2}$, i.e., for all $q\in \partial \Sigma_p$,  $T_q \partial \Sigma_p\subset \mathcal{H}_q=\ker \hat{\theta}_q$,  then  $\Sigma$ is \emph{asymptotically horizontal}; 
    	\item If, in addition, for all $q\in \partial \Sigma_p$, $T_q \Sigma_p$ is orthogonal to $\mathbf{T}$ , i.e., $T_q \Sigma_p \subset \ker \theta_q$, then  $\Sigma$ is \emph{strongly asymptotically horizontal}.
    \end{enumerate}   
 Note (2) and (3) can only hold when $m\leq n+1$.  When $m=n+1$ the term \emph{horizontal} is replaced by \emph{Legendrian}.
 \end{definition}

 If $\Sigma_{p'}$ is another choice of a  Bergman compactification of $\Sigma$, then $\Sigma_p=\bar{\Phi}(\Sigma_{p'})$ for some $\bar{\Phi} \in Aut_{J}(\bar{\mathbb{B}}^{2n+2})$.  It follows from Propositions \ref{PhibProp} and \ref{CRAutomorphismsThm} that Definition \ref{def_asymreg} is independent of choices.   In particular,  we may think of a $C^l$-asymptotically regular submanifold $\Sigma$, as having a well defined $C^l$-regular \emph{ideal boundary}, $\partial_\infty \Sigma \subset \partial_\infty \mathbb{CH}^{n+1}$.    The submanifold $\Sigma$ is asymptotically horizontal precisely when $\partial_\infty \Sigma$ satisfies this property with respect to the CR structure of $\partial_\infty\mathbb{CH}^{n+1}$.  

The notion of being  asymptomatically horizontal is natural as it automatically holds for isotropic submanifolds of sufficient asymptotic regularity.
\begin{lem}\label{IsotropIsHorizLem}
	Let $\Sigma\subset \mathbb{CH}^{n+1}$ be a $C^1$-asymptotically regular $m$-dimensional submanifold.  If $m\geq 2$ and $\Sigma$ is isotropic, then it is asymptotically  horizontal.  
\end{lem}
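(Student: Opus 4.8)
The plan is to work in a single fixed Bergman compactification $\Sigma_p = \overline{\Upsilon_p(\Sigma)}\subset\bar{\mathbb{B}}^{2n+2}$ and to extract the horizontality of $\partial\Sigma_p$ from the vanishing of the symplectic form. Since $\Upsilon_p$ is a biholomorphic isometry, $\Upsilon_p^*\omega_B=\omega_{\mathbb{CH}}$, so the isotropic hypothesis $\omega_{\mathbb{CH}}|_\Sigma=0$ translates into $\omega_B|_{T_x\Upsilon_p(\Sigma)}=0$ at every interior point $x$. Because being asymptotically horizontal is independent of the choice of Bergman compactification (as noted after Definition \ref{def_asymreg}), it suffices to verify horizontality of $\partial\Sigma_p$ for this one choice.

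The difficulty is that $\omega_B$ degenerates at the ideal boundary $\{r=1\}$, so its restriction cannot be evaluated there directly. The key observation is to rescale. Using the formula from Section \ref{CompactSec_1}, I set
$$
\tilde\omega = (1-r^2)^2\,\omega_B = r^2(1-r^2)\tfrac{1}{2}d\theta + r\,dr\wedge\theta .
$$
Both $d\theta$ and $r\,dr\wedge\theta$ are smooth on a neighborhood of $\partial\mathbb{B}^{2n+2}$, so $\tilde\omega$ extends to a smooth two-form up to the boundary; at a boundary point $q$, where $r=1$ and the first term vanishes, it reduces to $\tilde\omega|_q=(dr\wedge\theta)|_q$. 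At every interior point $x$ we have $\tilde\omega|_{T_x\Sigma_p}=(1-r^2)^2\,\omega_B|_{T_x\Sigma_p}=0$.

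Next I would invoke $C^1$-asymptotic regularity: since $\Sigma_p$ is a $C^1$-regular submanifold with boundary, the tangent planes $T_x\Sigma_p$ vary continuously in the Grassmannian up to and including $\partial\Sigma_p$, so $T_x\Sigma_p\to T_q\Sigma_p$ as $x\to q$. As $\tilde\omega$ is continuous up to the boundary and vanishes on $T_x\Sigma_p$ for interior $x$, passing to the limit gives $\tilde\omega|_{T_q\Sigma_p}=0$, and therefore $(dr\wedge\theta)|_{T_q\Sigma_p}=0$ for every $q\in\partial\Sigma_p$.

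Finally I would read off horizontality. Transversality of $\Sigma_p$ to $\partial\mathbb{B}^{2n+2}$ provides $w\in T_q\Sigma_p$ with $dr(w)\neq 0$, while any $v\in T_q\partial\Sigma_p = T_q\Sigma_p\cap\ker dr_q$ satisfies $dr(v)=0$. Evaluating the vanishing form on this pair gives $0=(dr\wedge\theta)(w,v)=dr(w)\,\theta(v)$, hence $\theta(v)=0$, i.e.\ $\hat\theta_q(v)=0$ and $v\in\mathcal{H}_q$. The hypothesis $m\geq 2$ enters precisely here: it guarantees $\partial\Sigma_p$ is at least one-dimensional, so that this conclusion has content (for $m=1$ the boundary is zero-dimensional and horizontality is vacuous). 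This yields $T_q\partial\Sigma_p\subset\mathcal{H}_q$ for all $q$, which is the asymptotic horizontality of $\Sigma$. The main obstacle is the boundary degeneration of $\omega_B$, which the rescaling resolves by isolating the dominant term $dr\wedge\theta$, the one that detects exactly the $\theta$-component at infinity.
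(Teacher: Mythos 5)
Your proof is correct, and it reaches the conclusion by a cleaner mechanism than the paper's. Both arguments share the same skeleton: pass to a Bergman compactification, use that $\Upsilon_p$ intertwines $\omega_{\mathbb{CH}}$ with $\omega_B$, and read the isotropy condition at the ideal boundary, where the blow-up of $\omega_B$ is what forces the Reeb component to die. The difference is in how the leading-order information is extracted. The paper evaluates $\omega_B(\mathbf{T}^\top,\mathbf{X}^\top)=g_B(\mathbf{T}^\top,J_{\Real}(\mathbf{X}^\top))$ on the two distinguished tangent vectors $\mathbf{T}^\top$ and $\mathbf{X}^\top$, expands in powers of $1-r$, isolates the $(1-r)^{-2}$ coefficient $g_{\Real}(\mathbf{X}^\top,\mathbf{T}^\top)^2-|\mathbf{T}^\top|^2|\mathbf{X}^\top|^2$, and invokes the equality case of Cauchy--Schwarz to get $\mathbf{T}^\top=b\,\mathbf{X}^\top$ on $\partial\Sigma_p$. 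You instead multiply the whole form by $(1-r^2)^2$, observe that $\tilde\omega=(1-r^2)^2\omega_B$ extends continuously to the boundary where it reduces to $dr\wedge\theta$, and use the $C^1$ convergence of tangent planes to conclude $(dr\wedge\theta)|_{T_q\Sigma_p}=0$; transversality then hands you $\theta(v)=0$ for $v\in T_q\partial\Sigma_p$ by pairing against a transversal vector. The two conclusions are equivalent ($(dr\wedge\theta)|_{T_q\Sigma_p}=0$ is precisely the statement that $\mathbf{T}^\top$ is proportional to $\mathbf{X}^\top$ there), but your route replaces the coordinate decomposition and the Cauchy--Schwarz step with a single continuity argument and a one-line linear-algebra evaluation of a decomposable $2$-form; what you give up is the slightly finer pointwise information $\mathbf{T}^\top=b\,\mathbf{X}^\top$ on the full tangent plane, which the lemma does not need. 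Your remark about where $m\geq 2$ enters is also consistent with the paper: neither argument uses it in an essential way, and it serves only to exclude the case where every curve is isotropic and horizontality of a zero-dimensional boundary is vacuous.
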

\begin{proof}
Let $\Sigma_p$ be a Bergman compactification of $\Sigma$.  Clearly, $\Sigma $ is isotropic if and only if $\Sigma_p$ is isotropic.
  Hence,  for $X,Y\in T_q \Sigma_p$, one has $\omega_B(X,Y)= g_B(X, J_{B}(Y))=0$. Let us denote by $\mathbf{T}^\top$, the tangential component, with respect to $g_{\Real}$, of $\mathbf{T}$ along $\Sigma_p$.  Using $J_{\Real}=J_B$ we have,
  $$
  g_B(\mathbf{T}^\top, J_{\Real}(\mathbf{X}^\top))=0.
  $$
  As $\Sigma_p$ is $C^1$ up to the boundary and meets $\partial \mathbb{B}^{2n+2}$ transversally, 
  $$
  \mathbf{X}^\top =\alpha \mathbf{X}+\gamma \mathbf{T}+\mathbf{v}
  $$
  where $\mathbf{v}$ is $g_{\Real}$-orthogonal to $\mathbf{X}$ and $\mathbf{T}$.  Likewise, one has 
  $$
  \mathbf{T}^\top =\beta \mathbf{T}+\delta \mathbf{X}+\mathbf{w}
   $$
   where $\mathbf{w}$ is $g_{\Real}$-orthogonal to $\mathbf{X}$ and $\mathbf{T}$.  On the boundary, we have
   $$
   \beta=|\mathbf{T}^\top|^2 \mbox{ and } \alpha=|\mathbf{X}^\top|^2 \neq 0
   $$
   while
   $$
   \gamma= g_{\Real}(\mathbf{X}^\top, \mathbf{T})=g_{\Real}(\mathbf{X}^\top, \mathbf{T}^\top)  =g_{\Real}(\mathbf{X}, \mathbf{T}^\top)=\delta.
   $$
 As $\mathbf{w}$ and ${J}_{\Real}(\mathbf{v})$ are orthogonal to $\mathbf{T}$, one computes
  \begin{align*}
    0&=  g_B(\mathbf{T}^\top, J_{\Real}(\mathbf{X}^\top))= g_B(\beta \mathbf{T}+\delta \mathbf{X}+\mathbf{w}, -\alpha \mathbf{T}+\gamma \mathbf{X}+J_{\Real}(\mathbf{v}))\\
    &=(-\alpha \beta+\gamma \delta)\frac{r^2}{(1-r^2)^2}+\frac{r^2}{1-r^2} \eta(\mathbf{w}, J_{\Real}(\mathbf{v})).
   \end{align*}
   Near the boundary, this gives the expansion:
   $$
   0=\frac{g_{\Real}(\mathbf{X}^\top, \mathbf{T}^\top)^2-|\mathbf{T}^\top|^2|\mathbf{X}^\top|^2}{4(1-r)^2}+o((1-r)^{-2}).
   $$
  The Cauchy-Schwarz inequality and the fact that $\mathbf{X}^\top\neq 0$ on the boundary,  implies $\mathbf{T}^\top= b\mathbf{X}^\top$.  As $\mathbf{X}^\top$ is $g_{\Real}$-orthogonal to $\partial \Sigma_p\subset \mathbb{S}^{2n+1}$,  the same is true of $\mathbf{T}^\top$. Hence, $\partial \Sigma_p$ is orthogonal to $\mathbf{T}$ and so it is horizontal.
\end{proof}

\subsection{Modified Bergman compactification}\label{sec_bergman} While the Bergman compactification is well-adapted to the complex geometry of $\mathbb{CH}^{n+1}$, it seems less satisfactory for studying the asymptotic regularity of minimal submanifolds;  this is apparent in the examples of \cite{castroMinimalLagrangianSubmanifolds2002}. 
Therefore, it is convenient to introduce a related compactification, which possesses certain computational features that make it similar to the usual conformal compactification of hyperbolic space.

To begin, consider the radial function $s:\mathbb{B}^{2n+2}\to [0,1]$ defined by
$$
s=\frac{r}{1+\sqrt{1-r^2}} \mbox{ or, equivalently, by }r=\frac{2s}{1+s^2}. 
$$
Observe that $r$ extends smoothly with derivative zero to $s=1$ while $s$ extends only as a $\frac{1}{2}$-H\"older continuous function to $r=1$.
Using
$$
1-s^2=2\left(1-\frac{1}{1+\sqrt{1-r^2}}\right)=2 \frac{\sqrt{1-r^2}}{1+\sqrt{1-r^2}} \mbox{ and }1-r^2=\frac{(1-s^2)^2}{(1+s^2)^2},
$$
one computes that
$$
g_{B}=\frac{4}{(1-s^2)^2}( ds^2+s^2 \theta^2+s^2\eta)+\frac{16 s^4}{(1-s^2)^4} \theta^2=g_{P}+\frac{16 s^4}{(1-s^2)^4} \theta^2
$$
where $g_P$ is the Poincar\'{e} metric on $\mathbb{B}^{2n+2}$ of constant curvature $-1$.
Likewise,
$$
\omega_{B}= \frac{4s^2}{(1-s^2)^2}\frac{1}{2} d\theta +\frac{4(1+s^2)}{(1-s^2)^3} s ds \wedge \theta.
$$

Now consider the diffeomorphism $\mathcal{S}: \mathbb{B}^{2n+2}\to \mathbb{B}^{2n+2}$ given by
$$
\mathcal{S}: \mathbf{z}\mapsto \frac{1}{1+\sqrt{1-|\mathbf{z}|^2}} \mathbf{z} \mbox{ with inverse } \mathcal{S}^{-1}: \mathbf{z}\mapsto  \frac{2}{1+|\mathbf{z}|^2}\mathbf{z}.
$$
This map extends to a $\frac{1}{2}$-H\"{o}lder continuous, but not smooth, homeomorphism, $\bar{\mathcal{S}}$, from the closed ball $\bar{\mathbb{B}}^{2n+2}$ to itself.  Clearly, $s(p)=r(\mathcal{S}(p))$
and so
$$
\mathcal{S}^* dr=ds=\frac{1}{\sqrt{1-r^2}(1+\sqrt{1-r^2})} dr \mbox{ and }(\mathcal{S}^{-1})^* ds =dr= \frac{2(1-s^2)}{(1+s^2)^2} ds.
$$
Moreover,
$$
\mathcal{S}^* \theta=\theta \mbox{ and } \mathcal{S}^* \eta=\eta.
$$
Hence, if we define a metric on $\mathbb{B}^{2n+2}$ by
$$
{g}_{\tilde{B}}=(\mathcal{S}^{-1})^* g_B=\frac{4}{(1-r^2)^2} g_E + \frac{16 r^4}{(1-r^2)^4}\theta^2= g_P + \frac{16 r^4}{(1-r^2)^4}\theta^2,
$$
then $\mathcal{S}^* {g}_{\tilde{B}}=g_B$.  Here ${g}_{\tilde{B}}$ is the \emph{modified Bergman metric} which is obtained from the Poincar\'{e} metric in a particularly simple manner.  The corresponding symplectic form, $\omega_{\tilde{B}}$ satisfies $\mathcal{S}^* \omega_{\tilde{B}}=\omega_B$ and the compatible almost complex structure is
$$
J_{\tilde{B}}=J_{\Real}+ 2r \left( \frac{1}{1-r^2} \mathbf{X}\otimes r\theta-\frac{1}{1+r^2} \mathbf{T}\otimes dr\right).
$$
A consequence is that $Aut_{{g}_{\tilde{B}}}^+(\mathbb{B}^{2n+2})$, the orientation preserving isometries of ${g}_{\tilde{B}}$ are not holomorphic with respect to the usual complex structure of the ball.  However, as every element $\tilde{\Phi}\in Aut_{g_{\tilde{B}}}^+(\mathbb{B}^{2n+2})$ is of the form
$$
\tilde{\Phi}= \mathcal{S} \circ \Phi \circ \mathcal{S}^{-1}
$$
for a unique $\Phi \in Aut_{g_B}^+(\mathbb{B}^{2n+2})$, it follows that $\tilde{\Phi}$ extends smoothly to $\bar{\mathbb{B}}^{2n+2}$ and induces the same map in $Aut_{CR}(\mathbb{S}^{2n+1})$  as $\bar{\Phi}$.

We now use $\mathcal{S}$ to define a modified form of the Bergman compactification.  
Fix $p\in \mathbb{CH}^{n+1}$ and let $ \Upsilon_p:\mathbb{CH}^{n+1}\to \mathbb{B}^{2n+2}$ be the corresponding  choice of Bergman compactification. Let $\tilde{\Upsilon}_p: \mathbb{CH}^{n+1}\to \mathbb{B}^{n+1}$ be the map $\tilde{\Upsilon}_p=\mathcal{S}\circ \Upsilon_p$ so $\tilde{\Upsilon}_p^* \tilde{g}_B=g_{\mathbb{CH}}$ and $\tilde{\Upsilon}_p(p)=\mathbf{0}$.   For $\Sigma\subset \mathbb{CH}^{n+1}$, an $m$-dimensional submanifold,  set
$$
\tilde{\Sigma}_p=\bar{\mathcal{S}}(\Sigma_p)=\overline{\tilde{\Upsilon}_p(\Sigma)}\subset \bar{\mathbb{B}}^{2n+2},
$$ 
which we call a \emph{modified Bergman compactification} of $\Sigma$.
\begin{definition}\label{def_weakasym}
	Suppose that $\Sigma$ and $\tilde{\Sigma}_p$ are as above:
    \begin{enumerate}
    	\item  For any $l\geq 1$, $\Sigma$ is \emph{weakly $C^l$-asymptotically regular} if $\tilde{\Sigma}_p$ is a $C^l$-regular  submanifold with boundary in $\mathbb{B}^{2n+2}$ that meets $\partial \mathbb{B}^{2n+2}$ transversally;    
       \item If, in addition to (1), $\tilde{\Sigma}_p$ meets $\partial  \mathbb{B}^{2n+2}$ orthogonally then $\Sigma$ is \emph{asymptotically quasi-normal};
       	\item If, in addition to (1),  $\partial \tilde{\Sigma}_p$ is a horizontal submanifold of $\mathbb{S}^{2n+1}=\partial \mathbb{B}^{2n+2}$, then  $\Sigma$ is \emph{weakly asymptotically horizontal}. 
        \end{enumerate}
     For (3) to hold,   $m\leq n+1$ and when $m=n+1$ the term \emph{horizontal} in (3) is replaced by \emph{Legendrian}.
 \end{definition}

In the above definition, the independence of items (1) and (3)  from the choice of $p$ follow from the properties of $Aut_{g_{\tilde{B}}}^+(\mathbb{B}^{2n+2})$.  To establish the independence of item (2) and to relate Definition \ref{def_weakasym} to Definition \ref{def_asymreg} we use the following result. 
\begin{lem}\label{WeakAsy2StrongAsyLem}
Let $\Sigma\subset \mathbb{CH}^{n+1}$ be an $m$-dimensional submanifold and fix an $l\geq 1$.  
\begin{enumerate}
	\item If $\Sigma$ is {$C^l$-asymptotically regular}, then $\Sigma$ is weakly $C^l$-asymptotically regular and asymptotically quasi-normal.
    \item Conversely, if $\Sigma$ is weakly $C^l$-asymptotically regular and asymptotically quasi-normal, then  $\Sigma$ is {$C^1$-asymptotically regular}.
    \item If $\Sigma$ is {$C^1$-asymptotically regular}, then $\Sigma$ is  asymptotically horizontal if and only if $\Sigma$ is weakly horizontal.
    \item  If $\Sigma$ is weakly {$C^2$-asymptotically regular} and asymptotically quasi-normal,  then $\Sigma$ is strongly asymptotically horizontal if and only if $\Sigma$ is weakly horizontal and, for a modified Bergman compactification, $\tilde{\Sigma}_p$, 
    $$
    g_{\Real}(\mathbf{T},\mathbf{A}_{\tilde{\Sigma}_p}^\Real(\mathbf{X}^\top, \mathbf{X}^\top))=0 \mbox{ along $\partial \tilde{\Sigma}_p$}.
    $$
\end{enumerate}
\end{lem}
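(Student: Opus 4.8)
The plan is to reduce all four statements to a local analysis near a boundary point of the compactification, exploiting that the transition map $\mathcal{S}$ acts purely radially: it fixes the angular variables (since $\mathcal{S}^*\theta=\theta$ and $\mathcal{S}^*\eta=\eta$) and changes the radius by $r=\frac{2s}{1+s^2}$, whose inverse $s=s(r)$ satisfies $1-r=\tfrac12(1-s)^2(1+o(1))$ near $\partial\mathbb{B}^{2n+2}$. Equivalently, $\mathcal{S}^{-1}$ is smooth on $\bar{\mathbb{B}}^{2n+2}$ with vanishing radial derivative at the boundary, while $\mathcal{S}$ is only $\tfrac12$-H\"older there. Near a boundary point I would write each compactification as a $C^l$ graph $\phi''=h(R,\phi')$, where $R$ is the radial coordinate, $\phi'$ are coordinates on the ideal boundary, and $\phi''$ are the remaining angular directions; transversality makes $R$ a legitimate coordinate, and orthogonal intersection with the sphere becomes the condition $\partial_R h=0$ at $R=1$.

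For Part (1), I would reparametrize $\tilde{\Sigma}_p=\bar{\mathcal{S}}(\Sigma_p)$ by its own radial coordinate. Expressing the old radius as a function of the new one uses the \emph{smooth} map $\frac{2R}{1+R^2}$, so the graph of $\tilde{\Sigma}_p$ is $h$ precomposed with a smooth radial change and remains $C^l$; since that change has vanishing derivative at the boundary, the chain rule forces the transverse slope of $\tilde{\Sigma}_p$ to vanish there, i.e.\ orthogonality. Part (2) runs the same reparametrization in reverse, where expressing the radius of $\Sigma_p$ now forces composition with the H\"older function $s$. The transverse derivative of the resulting graph is a product $\partial_R\tilde h\cdot s'$ of a factor tending to $0$ (by the quasi-normal hypothesis $\partial_R\tilde h|_{R=1}=0$) and a factor blowing up like $(1-R)^{-1/2}$. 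The crucial point is that the second-order Taylor expansion of the ($l\ge2$) graph, $\tilde h=\tilde h(1,\phi')+\tfrac12\partial_R^2\tilde h(1,\phi')(R-1)^2+\cdots$, combined with $(R-1)^2\sim 2(1-R')$, exactly cancels the square-root singularity, leaving a \emph{linear} leading term and hence a transverse $C^1$ graph; the next, half-integer, power $(1-R')^{3/2}$ shows that no more than $C^1$ survives, which is why the conclusion is only $C^1$-asymptotic regularity. This cancellation is the heart of the lemma and I expect it to be the main obstacle to make fully rigorous.

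Part (3) is then essentially formal. Because $\bar{\mathcal{S}}$ fixes $\mathbb{S}^{2n+1}$ pointwise, $\partial\Sigma_p=\partial\tilde{\Sigma}_p$ as submanifolds of the ideal boundary, with identical tangent spaces, so horizontality of one is horizontality of the other; Part (1) applied with $l=1$ guarantees $\tilde{\Sigma}_p$ is $C^1$, so weak horizontality is even defined.

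For Part (4), I would first invoke Part (2) to make $\Sigma_p$ a $C^1$ submanifold, so that strong horizontality is meaningful. At $q\in\partial\Sigma_p$, the condition $T_q\Sigma_p\subset\ker\theta_q$ splits into horizontality of $T_q\partial\Sigma_p$ (which is weak horizontality, by Part (3)) together with the single extra condition $\theta(\nu)=0$ on the transverse tangent direction $\nu$ of $\Sigma_p$. Since $\theta$ annihilates the radial direction, only the angular component of $\nu$ matters, and pushing the Part (2) expansion one order further shows this component equals $\partial_R^2\tilde h(1,\phi')$, i.e.\ the radial second fundamental form of $\tilde{\Sigma}_p$. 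Identifying $\partial_R$ at the boundary with the tangential position field $\mathbf{X}^\top$ and $\theta$ with $r^{-2}g_{\Real}(\cdot,\mathbf{T})$, this yields that $\theta(\nu)=0$ is equivalent to $g_{\Real}(\mathbf{T},\mathbf{A}^{\Real}_{\tilde{\Sigma}_p}(\mathbf{X}^\top,\mathbf{X}^\top))=0$ along $\partial\tilde{\Sigma}_p$, as claimed; here the weakly $C^2$ hypothesis is exactly what is needed to run the second-order expansion and to define $\mathbf{A}^{\Real}_{\tilde{\Sigma}_p}$. The remaining technical work is the careful second-order bookkeeping of the non-smooth radial change and verifying the identification of the second-order graph term with the stated second-fundamental-form quantity in a coordinate-free way, so as to sidestep any degeneracy of the auxiliary $\phi''$ coordinates.
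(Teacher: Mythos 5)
Your proposal is correct and follows essentially the same route as the paper: both arguments exploit that $\mathcal{S}$ acts purely radially with $1-\rho\sim\tfrac12(1-\sigma)^2$, Taylor-expand the compactified submanifold in the radial variable (the paper via a parametrization $\mathbf{F}(\rho,q)$ over $(1-\epsilon,1]\times\Gamma$ rather than a local graph, which is only a cosmetic difference), and observe that the quasi-normal condition kills precisely the first-order term that would otherwise produce a $\sqrt{1-\rho}$ singularity, while the $(1-\rho)^{3/2}$ term limits the conclusion to $C^1$. Your identification in part (4) of the obstruction with $g_{\Real}(\mathbf{T},\mathbf{A}_{\tilde\Sigma_p}^{\Real}(\mathbf{X}^\top,\mathbf{X}^\top))$ via the second radial derivative $\partial_\sigma^2\tilde{\mathbf{F}}(1,q)$ is also exactly the paper's argument.
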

\begin{rem}
	We observe that the converse direction of (2) must be genuinely weaker when $l\geq 2$ as can be seen by the examples of \cite{castroMinimalLagrangianSubmanifolds2002}.
\end{rem}
\begin{proof}
Let $\Sigma_p=\overline{\Upsilon_{p}(\Sigma)}\subset \bar{\mathbb{B}}^{2n+2}$ be a Bergman compactification of $\Sigma$.
The fact that $\Sigma$ is $C^l$-asymptotically regular means the following: $\Sigma$ is a $C^l$- submanifold with boundary, $\Gamma=\partial \Sigma_p\subset \partial \mathbb{B}^{2n+2}$ is a $C^l$-submanifold, and $\Sigma_p$ meets $ \partial \mathbb{B}^{2n+2}$ transversally along $\Gamma$. Hence, there is a parametrization of $\Sigma_p$ in a neighborhood of $\Gamma$ given by a $C^l$ map
$$
\mathbf{F}: (1-\epsilon]\times \Gamma\to \Sigma_p \subset \bar{\mathbb{B}}^{2n+2}
$$
with the property that $|\mathbf{F}(\rho, q)|=\rho$.  By Taylor's theorem, we may write
$$
\mathbf{F}(\rho, q)= \mathbf{X}(q)+ \sum_{i=1}^l (1-\rho)^i \mathbf{a}_i(q)+(1-\rho)^{l}  \mathbf{f}(\rho,q)
$$
where $\mathbf{f}(1,q)=\mathbf{0}$.   The properties of $\Sigma_p$ ensure $g_{\Real}(\mathbf{a}_1(q), \mathbf{X}(q))<0$.
Thus,
\begin{align*}
\mathcal{S}(\mathbf{F}(\rho, q))&= \frac{1}{1+\sqrt{1-\rho^2}} \mathbf{F}(\rho, q)\\
&= \frac{1}{1+\sqrt{1-\rho^2}}  \mathbf{X}(q)+ \sum_{i=1}^l \frac{(1-\rho)^i}{1+\sqrt{1-\rho^2}} \mathbf{a}_i(q)+\frac{(1-\rho)^{l}}{1+\sqrt{1-\rho^2}}  \mathbf{f}(\rho,q). 	
\end{align*}
Setting 
$$
\tilde{\mathbf{F}}(\sigma, q)= \mathcal{S}\left(\mathbf{F}\left(\frac{2\sigma}{1+\sigma^2}, q\right)\right)
$$
one obtains that
$$
\tilde{\mathbf{F}}(\sigma, q)= \frac{1+\sigma^2}{2} \mathbf{X}(q)+ \frac{1}{2}\sum_{i=1}^l (1-\sigma)^{2i}(1+\sigma^2)^{i-1} \mathbf{a}_i(q)+\frac{1}{2}(1-\sigma)^{2l} \tilde{\mathbf{f}}(\sigma, q).
$$
It follows that, up to shrinking $\epsilon$, 
$$\tilde{\mathbf{F}}: (1-\epsilon,1]\times \Gamma\to \tilde{\Sigma}_p\subset \bar{\mathbb{B}}^{2n+2}$$
 is a $C^l$-embedding and so parametrizes $\tilde{\Sigma}_p=\bar{\mathcal{S}}(\Sigma_p)$ in a neighborhood of $\tilde{\Gamma}=\Gamma$.  In particular,  $\tilde{\Sigma}_p$ is a $C^l$-regular submanifold with boundary.  Moreover, 
$$
\partial_\sigma \tilde{\mathbf{F}}(1,q)= \mathbf{X}(q)
$$
and so we can conclude that $\tilde{\Sigma}_p$ meets $\partial \mathbb{B}^{2n+2}$ orthogonally.   This verifies (1).

In the converse direction, let $\tilde{\Sigma}_p$ be the appropriate modified Bergman compactification of $\Sigma$.  The hypotheses ensure that $\tilde{\Sigma}_p$ is a $C^l$-regular submanifold with boundary that meets $\partial \mathbb{B}^{2n+2}$ orthogonally.  This means that there
is a parametrization of $\tilde{\Sigma}_p$ in a neighborhood of $\Gamma$ by a $C^l$-embedding
$$
\tilde{\mathbf{G}}: (1-\epsilon,1]\times \Gamma\to \tilde{\Sigma}_p \subset \bar{\mathbb{B}}^{2n+2}
$$
with the property that $|\tilde{\mathbf{G}}(\sigma, q)|=\sigma$ and $\partial_\sigma \tilde{\mathbf{G}}(1,q)=\mathbf{X}(q)$.  By Taylor's theorem, 
$$
\tilde{\mathbf{G}}(\sigma, q)= \sigma\mathbf{X}(q)+ \frac{1}{2}(1-\sigma)^2 \mathbf{b}_2(q)+ (1-\sigma)^2 \mathbf{g}(\sigma, q)
$$
where $\mathbf{g}(1, q)=\mathbf{0}$ and we set $\mathbf{b}_2(q)=\mathbf{0}$ when $l=1$.  The hypotheses $|\tilde{\mathbf{G}}(\sigma, q)|=\sigma$ implies
$$
g_{\Real}(\mathbf{b}_2(q), \mathbf{X}(q))=0.
$$ 
One has
\begin{align*}
	\mathcal{S}^{-1}(\tilde{\mathbf{G}}(\sigma, q))&= \frac{2}{1+\sigma^2} \tilde{\mathbf{G}}(\sigma, q)\\
	 &=\frac{2\sigma}{1+\sigma^2} \mathbf{X}(q)+ \left(1-\frac{2\sigma}{1+\sigma^2}\right) \mathbf{b}_2(q)+2\left(1-\frac{2\sigma}{1+\sigma^2}\right) \mathbf{g}(\sigma, q).
\end{align*}
This is not an immersion at $\sigma=1$. However,  if we set
$$
\mathbf{G}(\rho, q)= \mathcal{S}^{-1}\left(\tilde{\mathbf{G}}\left(\frac{\rho}{1+\sqrt{1-\rho^2}}, q\right) \right),
$$
one obtains
$$
\mathbf{G}(\rho, q)= \rho \mathbf{X}(q)+(1-\rho)\left(\mathbf{b}_2(q)+\tilde{\mathbf{g}}(\rho,q)\right)
$$
where $ \tilde{\mathbf{g}}(1,q)=\mathbf{0}$. Up to shrinking $\epsilon$, this is readily checked to be a $C^1$ embedding on $(1-\epsilon, 1]\times \Gamma$ and so $\Sigma_p$ is a $C^1$-regular submanifold with boundary.  Moreover, $g_{\Real}(\partial_\rho \mathbf{G}(1, q), \mathbf{X}(q))=1$ and so $\Sigma_p$ meets $\partial \mathbb{B}^{2n+2}$ transversally. This verifies (2).  In addition, combined with (1) it also shows that being asymptotically quasi-normal is independent of the choice of modified Bergman compactification. Note that, when $l\geq 2$, unless $\mathbf{g}$ has appropriate parity at $\sigma=1$, there is a loss of regularity.

Having established (1) and (2),  (3) is an immediate consequence of the definitions.
Finally, by what has already been shown, the hypotheses of (4) imply that $\Sigma$ is $C^1$-asymptotically regular.  Using the parameterization, $\mathbf{F}$, from above with $l=1$, we see $\Sigma$ is strongly asymptotically horizontal, if and only if $g_{\Real}(\mathbf{a}_1(q), \mathbf{T})=0$ for all $q\in \Gamma$. It is not hard to see that this is equivalent to 
$$
g_{\Real}(\partial_\sigma^2 \tilde{\mathbf{F}}(1,q), \mathbf{T})=0,
$$
which can be readily checked to be equivalent to the geometric condition on $\tilde{\Sigma}_p$.
\end{proof}

\subsection{Second fundamental form and mean curvature in $\mathbb{CH}^{n+1}$}
On  $\mathbb{B}^{2n+2}$, let $h={g}_{\tilde{B}}$, be the modified Bergman metric,  $g=g_{P}$, be  the Poincar\'{e} metric, and set $\tau= \frac{4s^2}{(1-s^2)^2}\theta$.  Here  and in the following subsections we abuse notation and use $s$ instead of $r$ to emphasize that we are working with the modified Bergman metric.  As such,  $h$ is, in the sense of Appendix \ref{RankOneSec}, a rank one deformation of $g$ by  $\tau$ and  $\tau(X)=g(X, \mathbf{T})$.  We specialize the computations of Appendix \ref{CurvatureAppendix} to this case.

First observe that because $g_{\Real}$ and $g_P$ are conformal one has
$$
\mathbf{T}^{\hat{N}}= \mathbf{T}-\frac{1+|\mathbf{T}|_g^2}{1+|\mathbf{T}^\top|^2_g} \mathbf{T}^\top= \mathbf{T} -\frac{ (1+s)^2}{(1-s)^2+4|\mathbf{T}^\top|_{\Real}^2} \mathbf{T}^\top
$$
where we used the fact that $\mathbf{T}^N=\mathbf{T}^\perp$, i.e.,  the orthogonal component of $\mathbf{T}$ with respect to $\Sigma$ is the same for $g=g_P$ and $g_{\Real}$.
Likewise, for a vector field $\mathbf{V}$ along $\Sigma$
$$
\mathbf{V}^{\tilde{N}}=\mathbf{V}^N-g(\mathbf{V}^N, \mathbf{T}) \frac{\mathbf{T}^{{N}}}{|\mathbf{T}^{{N}}|_g^2}=\mathbf{V}^\perp-g_\Real(\mathbf{V}^\perp, \mathbf{T}) \frac{\mathbf{T}^{\perp}}{|\mathbf{T}^{\perp}|_\Real^2}.
$$
In particular,  $\mathbf{T}^{\tilde{N}}=\mathbf{0}$. 

\begin{prop}\label{2ndFFMCProp}
 Let $\Sigma\subset \mathbb{B}^{2n+2}$ be an $m$-dimensional submanifold.  The mean curvature of $\Sigma$ in $h=g_{\tilde{B}}$ and in  $g_{\Real}$ are related by:
 \begin{align*}
 	(\mathbf{H}_\Sigma^h)^{\tilde{N}} &= \frac{(1-s^2)^2}{4} (\mathbf{H}_\Sigma^\Real)^{\tilde{N}}-\frac{1-s^2}{2} (m+1) \mathbf{X}^{\tilde{N}}\\
 	&+\frac{(1-s^2)^2}{(1-s^2)^2 +4|\mathbf{T}^\top|_{\Real}^2} \left( \frac{1-s^2}{2} \mathbf{X}- \mathbf{A}_{\Sigma}^\Real(\mathbf{T}^\top, \mathbf{T}^\top) -2 J_{\Real}(\mathbf{T}^\top)\right)^{\tilde{N}}
 \end{align*}
and
 \begin{align*}
 	g_\Real(\mathbf{H}_\Sigma^h, \mathbf{T}^{\hat{N}})&=	\frac{(1-s^2)^2}{4} g_{\Real} (\mathbf{H}_\Sigma^\Real, \mathbf{T})+\frac{1-s^2}{2}\left( m+1+\frac{2}{1+s^2}\right) g_{\Real}(\mathbf{T}^\top, \mathbf{X}) \\
 	&+\frac{(1-s^2)^2}{(1-s^2)^2+4|\mathbf{T}^\top|_\Real^2}\left( \frac{1-s^2}{2} g_\Real(\mathbf{T}^\top, \mathbf{X}) - g_\Real(\mathbf{A}_\Sigma^\Real(\mathbf{T}^\top, \mathbf{T}^\top), \mathbf{T})\right). 
 \end{align*}
\end{prop}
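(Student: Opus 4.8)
The plan is to deduce both identities by composing two metric changes, each handled by the general formulas of the appendices, since the modified Bergman metric factors as a two-step deformation of $g_{\Real}$. First, the Poincar\'e metric is the conformal rescaling $g_P = \frac{4}{(1-s^2)^2} g_{\Real} = e^{2\phi} g_{\Real}$ with $e^{2\phi} = \frac{4}{(1-s^2)^2}$; second, $h = g_{\tilde{B}} = g_P + \tau \otimes \tau$, and since $\tau(X) = g_P(X, \mathbf{T})$ this is exactly a rank one deformation of $g_P$ by $\mathbf{T}$ in the sense of Appendix \ref{RankOneSec}. Thus the specialized mean-curvature formulas of Appendix \ref{CurvatureAppendix} apply directly with deformation vector field $\mathbf{T}$, and the two displayed identities are the $\tilde{N}$-component and the $\mathbf{T}^{\hat{N}}$-pairing of the resulting expression for $\mathbf{H}_\Sigma^h$.

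I would first dispatch the conformal step. Since $dr = r^{-1} g_{\Real}(\cdot, \mathbf{X})$ one computes $\nabla^{\Real} \phi = \frac{2}{1-s^2} \mathbf{X}$, so $(\nabla^{\Real}\phi)^\perp = \frac{2}{1-s^2}\mathbf{X}^\perp$. The standard transformation of the vector second fundamental form under a conformal change, $\mathbf{A}^{g_P}(X,Y) = \mathbf{A}^{\Real}(X,Y) - g_{\Real}(X,Y)(\nabla^{\Real}\phi)^\perp$, traced over a $g_P$-orthonormal frame, gives
\begin{equation*}
\mathbf{H}_\Sigma^{g_P} = \frac{(1-s^2)^2}{4} \mathbf{H}_\Sigma^\Real - m\,\frac{1-s^2}{2}\, \mathbf{X}^\perp .
\end{equation*}
This supplies the overall factor $\frac{(1-s^2)^2}{4}$ and the bulk of the $\mathbf{X}$-term. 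Because $g_{\Real}$ and $g_P$ are conformal they share a normal bundle, so $\mathbf{T}^\perp = \mathbf{T}^N$, $\mathbf{X}^\perp$, and $\mathbf{A}^\Real(\mathbf{T}^\top, \mathbf{T}^\top)$ may all be computed Euclideanly; this is why the Euclidean second fundamental form appears in the final statement once $\mathbf{A}^{g_P}(\mathbf{T}^\top,\mathbf{T}^\top)$ is converted back to $\mathbf{A}^\Real(\mathbf{T}^\top,\mathbf{T}^\top)$ via the same conformal identity.

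Next I would substitute this into the rank one deformation formula of Appendix \ref{CurvatureAppendix}. The connection difference $\nabla^h - \nabla^{g_P}$ is built from $\tau$ and $\nabla^{g_P}\tau$, and the latter is governed by $\nabla^{\Real}_X \mathbf{T} = -J_{\Real}(X)$, which produces the $J_{\Real}(\mathbf{T}^\top)$ contribution. The denominators $(1-s^2)^2 + 4|\mathbf{T}^\top|_\Real^2$ arise as $(1-s^2)^2\bigl(1 + |\mathbf{T}^\top|_{g_P}^2\bigr)$, and the deformation-adjusted projections $\mathbf{T}^{\hat{N}}$ and $(\cdot)^{\tilde{N}}$ recorded just before the statement encode the $h$-orthogonal splitting; in particular $\mathbf{T}^{\tilde{N}} = \mathbf{0}$ collapses several terms. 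Tracing over the appropriate frame yields the extra unit in the coefficient $m+1$ — the deformation direction contributes one more term to the $h$-trace — together with the $\mathbf{A}^\Real(\mathbf{T}^\top, \mathbf{T}^\top)$ term; pairing the same expression with $\mathbf{T}^{\hat{N}}$ instead of extracting the $\tilde{N}$-component delivers the scalar identity, the extra $\frac{2}{1+s^2}$ in its coefficient coming from the $h$-geometry of the deformation direction $\mathbf{T}^{\hat{N}}$.

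The genuine difficulty is bookkeeping rather than any conceptual hurdle: the three metrics $g_{\Real}$, $g_P$, and $h$ induce three distinct orthogonal decompositions of $T\mathbb{B}^{2n+2}$ along $\Sigma$, and one must shepherd each vector through all of them while keeping the projections $\perp$, $N$, $\hat{N}$, and $\tilde{N}$ straight. The two structural facts that make this tractable are that the conformal pair $g_{\Real}, g_P$ share a normal bundle and that $\mathbf{T}$ is the single deformation direction with $\nabla^{\Real}\mathbf{T} = -J_{\Real}$; the remaining labor is verifying that the conformal correction $-|\mathbf{T}^\top|_\Real^2 (\nabla^\Real\phi)^\perp$ and the deformation terms recombine into the displayed $\frac{1-s^2}{2}\mathbf{X}$ contributions inside the final parentheses.
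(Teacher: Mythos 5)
Your proposal follows the paper's proof essentially verbatim in structure: factor the passage from $g_{\Real}$ to $h=g_{\tilde B}$ as the conformal change to $g_P$ followed by the rank one deformation by $\mathbf{T}$, feed $\nabla^{\Real}_X\mathbf{T}=-J_{\Real}(X)$ through the conformal connection formula to identify the tensor $\mathbf{a}$, apply Corollary \ref{MCRankOneCor}, and convert back to Euclidean quantities using the shared normal bundle and $(1-s^2)^2|\mathbf{T}^\top|_{g_P}^2=4|\mathbf{T}^\top|_{\Real}^2$. The plan is correct and matches the paper's argument, including the bookkeeping points (the $m+1$ coefficient and the denominators) that you flag as the real work.
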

\begin{proof}
Using $
\nabla^{{\Real}}_X \mathbf{T}=-J_{\Real}(X)$
and the formula for the connection of a conformally changed metric, one has
\begin{align*}
	\nabla_{Z}^{g} \mathbf{T}&= -J_{\Real}(Z)- Z \cdot \log (1-s^2) \mathbf{T} -\mathbf{T}\cdot \log (1-s^2) +g_\Real(Z,\mathbf{T}) \nabla^{\Real} \log (1-s^2)\\
	&=-J_{\Real}(Z)+ 2g_{\Real}(Z , \mathbf{X})  \frac{\mathbf{T}}{1-s^2}  -2g_\Real(Z,\mathbf{T}) \frac{\mathbf{X}}{1-s^2}.
\end{align*}
Using $g(X,J_{\Real}(Y))=-g(J_{\Real}(X), Y)$ and $J_{\Real}(\mathbf{T})=\mathbf{X} $ this yields,
\begin{align*}
	g(\nabla_{Z}^{g} \mathbf{T}, Y)&=g(Z, J_{\Real}(Y))+ g_P\left(\frac{2 g_{\Real}(Z, \mathbf{X})}{1-s^2} \mathbf{T}-\frac{2 g_{\Real}(Z, \mathbf{T})}{1-s^2}\mathbf{X}, {Y}\right)\\ 
	&=g(Z, J_{\Real}(Y))+ g_P\left(-\frac{2 g_{\Real}(Y, \mathbf{X})}{1-s^2} \mathbf{T}+\frac{2 g_{\Real}(Y, \mathbf{T})}{1-s^2}\mathbf{X}, Z\right).
\end{align*}
Hence, the tensor field, $\mathbf{a}$, from Proposition \ref{RankOne2ndFFProp} satisfies
\begin{align*}
	\mathbf{a}(\mathbf{T}^\top)=
\mathbf{a}(\mathbf{T}^\top)= J_{\Real}(\mathbf{T}^\top)-2g_{\Real}(\mathbf{T}^\top, \mathbf{X}) \frac{\mathbf{T}}{1-s^2} + 2|\mathbf{T}^\top|^2_\Real \frac{\mathbf{X}}{1-s^2}.
\end{align*}
By Corollary \ref{MCRankOneCor} and $(1-s^2)^2|\mathbf{T}^\top|_g^2=4|\mathbf{T}^\top|_\Real^2$, it follows that
\begin{align*}
(\mathbf{H}_\Sigma^h)^{\tilde{N}} &= (\mathbf{H}_\Sigma^g)^{\tilde{N}}- \frac{(\mathbf{A}_\Sigma^g(\mathbf{T}^\top, \mathbf{T}^\top))^{\tilde{N}} +2(J_{\Real}(\mathbf{T}^\top))^{\tilde{N}} +(1-s^2)|\mathbf{T}^\top|_g^2 \mathbf{X}^{\tilde{N}}}{1+|\mathbf{T}^\top|_g^2}.
\end{align*}
Likewise, the formula for the conformal change of the second fundamental form and metric implies that
$$
\mathbf{A}_\Sigma^g(X,Y)= \mathbf{A}_\Sigma^\Real(X,Y)-\frac{2g_{\Real}(X,Y)\mathbf{X}^\perp}{1-s^2}  \mbox{ and } \mathbf{H}_\Sigma^g=\frac{(1-s^2)^2}{4}\mathbf{H}_\Sigma^\Real -\frac{m(1-s^2)\mathbf{X}^\perp}{2}.
$$
Hence,
\begin{align*}
	(\mathbf{H}_\Sigma^h)^{\tilde{N}} &= \frac{(1-s^2)^2}{4} (\mathbf{H}_\Sigma^\Real)^{\tilde{N}}-\frac{m(1-s^2)\mathbf{X}^{\tilde{N}}}{2}\\
	&- \frac{(\mathbf{A}_\Sigma^\Real(\mathbf{T}^\top, \mathbf{T}^\top))^{\tilde{N}} +2(J_{\Real}(\mathbf{T}^\top))^{\tilde{N}}+\frac{1}{2}(1-s^2)|\mathbf{T}^\top|_g^2 \mathbf{X}^{\tilde{N}}   }{1+|\mathbf{T}^\top|_g^2}\\
	&=\frac{(1-s^2)^2}{4} (\mathbf{H}_\Sigma^\Real)^{\tilde{N}}-\frac{(1-s^2)}{2}(m+1)\mathbf{X}^{\tilde{N}} \\
	&+\frac{1}{1+|\mathbf{T}^\top|^2_g} \left( \frac{(1-s^2)}{2} \mathbf{X}-\mathbf{A}_\Sigma^\Real(\mathbf{T}^\top, \mathbf{T}^\top)-2J_{\Real}(\mathbf{T}^\top)\right)^{\tilde{N}}.
\end{align*}
Using  $(1-s^2)^2|\mathbf{T}^\top|_g^2=4|\mathbf{T}^\top|_\Real^2$  again, yields the first formula.

Using Corollary \ref{MCRankOneCor} and the formula for conformal change one has,
\begin{align*}
	g(\mathbf{H}_\Sigma^h, \mathbf{T}^{\hat{N}}) &= g(\mathbf{H}_\Sigma^g, \mathbf{T})- \frac{g(\mathbf{A}_\Sigma^g(\mathbf{T}^\top, \mathbf{T}^\top), \mathbf{T})}{1+|\mathbf{T}^\top|_g^2}+\frac{\frac{4g_{\Real}(\mathbf{T}^\top, \mathbf{X}) }{1-s^2} |\mathbf{T}|_g^2-2g(J_{\Real}(\mathbf{T}^\top), \mathbf{T})}{1+|\mathbf{T}|_g^2}\\
	&=g_{\Real}(\mathbf{H}_\Sigma^\Real, \mathbf{T})-\frac{2m g_{\Real}(\mathbf{X}^\perp, \mathbf{T})}{1-s^2}-\frac{g (\mathbf{A}_\Sigma^\Real(\mathbf{T}^\top, \mathbf{T}^\top)-\frac{1}{2} |\mathbf{T}^\top|_g^2 \mathbf{X}^\perp, \mathbf{T})}{ 1+|\mathbf{T}^\top|_g^2}\\
	&+\frac{2g(\mathbf{T}^\top, \mathbf{X})}{1+|\mathbf{T}|_g^2}+\frac{4 g_{\Real}(\mathbf{T}^\top, \mathbf{X}) |\mathbf{T}|_g^2}{(1-s^2)(1+|\mathbf{T}|_g^2)}\\
	&=g_{\Real}(\mathbf{H}_\Sigma^\Real, \mathbf{T})+\frac{2m g_{\Real}(\mathbf{T}^\top, \mathbf{X})}{1-s^2}-\frac{g (\mathbf{A}_\Sigma^\Real(\mathbf{T}^\top, \mathbf{T}^\top), \mathbf{T})}{ 1+|\mathbf{T}^\top|_g^2}\\
	&-\frac{2|\mathbf{T}^\top|^2_{g}g_\Real(\mathbf{T}^\top, \mathbf{X})}{(1-s^2)(1+|\mathbf{T}^\top|_g^2)} +\frac{2g(\mathbf{T}^\top, \mathbf{X})}{1+|\mathbf{T}|_g^2}+\frac{4 g_{\Real}(\mathbf{T}^\top, \mathbf{X}) |\mathbf{T}|_g^2}{(1-s^2)(1+|\mathbf{T}|_g^2)}.
\end{align*}
Hence,
\begin{align*}
g(\mathbf{H}_\Sigma^h, \mathbf{T}^{\hat{N}})&=	g_{\Real}(\mathbf{H}_\Sigma^\Real, \mathbf{T})-\frac{g (\mathbf{A}_\Sigma^\Real(\mathbf{T}^\top, \mathbf{T}^\top), \mathbf{T})}{ 1+|\mathbf{T}^\top|_g^2}+\frac{2g_\Real(\mathbf{T}^\top, \mathbf{X})}{(1-s^2)(1+|\mathbf{T}^\top|_g^2)} \\
&+\frac{g_\Real(\mathbf{T}^\top, \mathbf{X})}{(1+s^2)^2}\left(\frac{8}{(1-s^2)^2}-\frac{4}{1-s^2}\right)+\frac{2(m+1) g_{\Real}(\mathbf{T}^\top, \mathbf{X})}{1-s^2}\\
&=	g_{\Real}(\mathbf{H}_\Sigma^\Real, \mathbf{T})-\frac{g (\mathbf{A}_\Sigma^\Real(\mathbf{T}^\top, \mathbf{T}^\top), \mathbf{T})}{ 1+|\mathbf{T}^\top|_g^2}+\frac{2g_\Real(\mathbf{T}^\top, \mathbf{X})}{(1-s^2)(1+|\mathbf{T}^\top|_g^2)} \\
&+\frac{4g_\Real(\mathbf{T}^\top, \mathbf{X})}{(1+s^2)(1-s^2)^2}+\frac{2(m+1) g_{\Real}(\mathbf{T}^\top, \mathbf{X})}{1-s^2}.
\end{align*}
The second formula follows.
\end{proof}
\begin{cor} \label{MinimalConeLem} If $\Gamma\subset \mathbb{S}^{2n+1}$ is  minimal in $g_{\mathbb{S}}$ and horizontal and $\Sigma$ is the (Euclidean) cone over $\Gamma$ with vertex $\mathbf{0}$ restricted to $\mathbb{B}^{2n+2}$, then ${\Sigma}\setminus \set{\mathbf{0}}$ is minimal in $h=g_{\tilde{B}}$ and isotropic with respect to $\omega_{\tilde{B}}$. 
\end{cor}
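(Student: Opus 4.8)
The plan is to verify the two conclusions separately, exploiting throughout that the cone is \emph{horizontal} in the strong sense that $\theta$ pulls back to zero on $\Sigma\setminus\set{\mathbf{0}}$. Indeed, at a point $s\omega$ with $\omega\in\Gamma$ and $s\in(0,1)$, the tangent space $T_{s\omega}\Sigma$ is spanned by the radial vector $\mathbf{X}$ together with the rescaled lifts of $T_\omega\Gamma$. Since $\mathbf{T}=-J_{\Real}(\mathbf{X})$ is $g_{\Real}$-orthogonal to $\mathbf{X}$ one has $\theta(\mathbf{X})=0$, and since $\Gamma$ is horizontal $\theta$ vanishes on $T_\omega\Gamma$; hence $\theta|_{T\Sigma}=0$. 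For the isotropy statement this is all that is needed: $\theta|_{T\Sigma}=0$ forces both $d\theta$ and $s\,ds\wedge\theta$ to pull back to zero on $\Sigma$, and since $\omega_{\tilde{B}}$ is a combination of exactly these two forms (with smooth coefficients), $\omega_{\tilde{B}}|_{\Sigma}=0$.

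For minimality I would feed the geometry of the cone into Proposition \ref{2ndFFMCProp}. First, $\theta|_{T\Sigma}=0$ gives $g_{\Real}(X,\mathbf{T})=s^2\theta(X)=0$ for every $X\in T\Sigma$, so $\mathbf{T}^\top=\mathbf{0}$ along $\Sigma$; in particular $\mathbf{T}^{\hat{N}}=\mathbf{T}$ and $\mathbf{T}$ is $h$-normal. Second, because the cone is scale-invariant, $\mathbf{X}$ is tangent to $\Sigma$, so its Euclidean normal part vanishes and, by conformality of $g_P$ with $g_{\Real}$, so does its $g_P$-normal part; thus $\mathbf{X}^{\tilde{N}}=\mathbf{0}$. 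Third, I would invoke the classical fact that the Euclidean cone over a submanifold of the sphere is minimal precisely when its link is minimal in $g_{\mathbb{S}}$; as $\Gamma$ is minimal this yields $\mathbf{H}_\Sigma^\Real=\mathbf{0}$ on $\Sigma\setminus\set{\mathbf{0}}$.

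Substituting $\mathbf{T}^\top=\mathbf{0}$, $\mathbf{X}^{\tilde{N}}=\mathbf{0}$ and $\mathbf{H}_\Sigma^\Real=\mathbf{0}$ into the two formulas of Proposition \ref{2ndFFMCProp} makes every term collapse: each summand carries a factor of $\mathbf{H}_\Sigma^\Real$, $\mathbf{X}^{\tilde{N}}$, $g_{\Real}(\mathbf{T}^\top,\mathbf{X})$, or $\mathbf{A}_\Sigma^\Real(\mathbf{T}^\top,\mathbf{T}^\top)$, all of which vanish. Hence $(\mathbf{H}_\Sigma^h)^{\tilde{N}}=\mathbf{0}$ and $g_{\Real}(\mathbf{H}_\Sigma^h,\mathbf{T}^{\hat{N}})=0$. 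To conclude $\mathbf{H}_\Sigma^h=\mathbf{0}$ I would note that $\mathbf{H}_\Sigma^h$ is $h$-normal, that the $h$-normal bundle is the span of $\mathbf{T}^{\hat{N}}=\mathbf{T}$ together with the $\tilde{N}$-directions, and that the latter are $g_{\Real}$-orthogonal to $\mathbf{T}$ while $g_{\Real}(\mathbf{T},\mathbf{T})=s^2\neq 0$; the two vanishing projections then exhaust the normal bundle and force $\mathbf{H}_\Sigma^h=\mathbf{0}$.

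The calculations are short once $\mathbf{T}^\top=\mathbf{0}$ is available, so the steps demanding genuine care are the two auxiliary inputs of the second paragraph: verifying $\mathbf{H}_\Sigma^\Real=\mathbf{0}$ from minimality of the link (the standard scaling computation for a metric cone $dt^2+t^2 g_\Gamma$, in which the radial line is totally geodesic and the transverse second fundamental form reproduces $t^{-1}$ times that of $\Gamma$ in $\mathbb{S}^{2n+1}$), and checking that vanishing of the $\tilde{N}$-component together with the $\mathbf{T}$-pairing really does exhaust the $h$-normal bundle. Excising the vertex $\mathbf{0}$ avoids the singularity of the cone and the failure of $\mathbf{X}^\perp=\mathbf{0}$ there.
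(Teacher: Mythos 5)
Your proposal is correct and follows the same route as the paper: reduce to $\mathbf{H}_\Sigma^{\Real}=\mathbf{0}$ (minimality of the link), $\mathbf{T}^\top=\mathbf{0}$ (horizontality of the link plus $\theta(\mathbf{X})=0$), and $\mathbf{X}^{\tilde{N}}=\mathbf{0}$ (scale invariance), then substitute into Proposition \ref{2ndFFMCProp} and note that the two vanishing projections exhaust the $h$-normal bundle. The paper's own proof is a three-line version of exactly this; your write-up merely makes explicit the steps (notably $\mathbf{X}^{\tilde{N}}=\mathbf{0}$ and the decomposition of the $h$-normal bundle) that the paper leaves implicit.
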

\begin{proof}
As $\Gamma$ is minimal in $g_{\mathbb{S}}$,  $\mathbf{H}^\Real_\Sigma=\mathbf{0}$ on $\Sigma\setminus \set{\mathbf{0}}$.  Likewise, as $\Gamma$ is horizontal, $\mathbf{T}^\top=\mathbf{0}$ on $\Sigma\setminus \set{\mathbf{0}}$ and so $\mathbf{H}_\Sigma^h=\mathbf{0}$.  This also implies that ${\Sigma}\setminus \set{\mathbf{0}}$ is isotropic.  
\end{proof}
\subsection{Asymptotic behavior of minimal submanifolds in $\mathbb{CH}^{n+1}$}\label{MinCompactSec}
We use Proposition \ref{2ndFFMCProp} to show an improvement of boundary regularity for minimal submanifolds of the modified Bergman metric.  That is, we prove Theorem \ref{main1}. 
%

We first show a preliminary partial result.
\begin{lem}\label{BoundaryTtanLem}
	Let $\Sigma\subset\bar{\mathbb{B}}^{2n+2}$ be an $m$-dimensional $C^2$-regular submanifold with boundary that meets $\partial \mathbb{B}^{2n+2}$ transversely along $\partial \Sigma\subset \partial \mathbb{B}^{2n+2}$.  If $\mathbf{T}^\top=\mathbf{0}$ on $\partial \Sigma$, then
	$$
	\mathbf{T}^\top=(1-s) \frac{1}{|\mathbf{X}^\top|^2} \left((J_{\Real}(\mathbf{X}^\top))^\top-S_{\mathbf{T}^\perp}^{\tilde{\Sigma}}(\mathbf{X}^\top)\right).
	$$
	Moreover, along $\partial \Sigma$, we may write 
	$$
	\mathbf{X}^\top=|\mathbf{X}^\top|^2_\Real\mathbf{X}+Z=|\mathbf{X}^\top|^2_\Real\mathbf{X}+J_{\mathbb{S}}(Z_1)+Z_2,
	$$
	where $Z$ is normal to $\mathbf{X}$, $\partial {\Sigma}$ and ${\mathbf{T}}$,  $Z_1$ is tangent to $\partial \Sigma$, $Z_2$ and $J_{\mathbb{S}}(X_2)$ are normal to $\partial \tilde{\Sigma}$, $\mathbf{X}$, and $\mathbf{T}$. 
	Using this decomposition we obtain, along $\partial \Sigma$,
	\begin{align*}
		(J_{\Real}(\mathbf{X}^\top))^\top=-Z_1\mbox{ and }	\mathbf{S}_{\mathbf{T}^\perp}^{{\Sigma}}(\mathbf{X}^\top)=g_{\Real}(\mathbf{T}, \mathbf{A}_{{\Sigma}}^\Real (\mathbf{X}^\top, \mathbf{X}^\top))\frac{\mathbf{X}^\top}{|\mathbf{X}^\top|_{\Real}^2}+Z_1 .
	\end{align*}
\end{lem}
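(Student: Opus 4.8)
The plan is to exploit that $\mathbf{T}=-J_{\Real}(\mathbf{X})$ is a Killing field with $\nabla^{\Real}_W \mathbf{T}=-J_{\Real}(W)$, so that $\mathbf{T}^\top$ is controlled by a first--order Weingarten computation. First I would decompose $\mathbf{T}=\mathbf{T}^\top+\mathbf{T}^\perp$, apply $\nabla^{\Real}_W$, and take tangential parts; using the Weingarten relation $(\nabla^{\Real}_W \mathbf{T}^\perp)^\top=-S_{\mathbf{T}^\perp}(W)$ this yields the pointwise identity
\[
\nabla^{\Sigma}_W \mathbf{T}^\top=-(J_{\Real}(W))^\top+S_{\mathbf{T}^\perp}(W),\qquad W\in T\Sigma .
\]
Along $\partial\Sigma$ we have $\mathbf{T}^\top=0$, hence $\mathbf{T}^\perp=\mathbf{T}$; moreover for $W$ tangent to $\partial\Sigma$ the left side vanishes, giving the auxiliary relation $S_{\mathbf{T}^\perp}(W)=(J_{\Real}(W))^\top$ on $\partial\Sigma$, which I will need later.

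For the first displayed formula I would argue by a first--order Taylor expansion transverse to the boundary, reading the right side as the leading $(1-s)$--coefficient of $\mathbf{T}^\top$. By transversality $\mathbf{X}^\top\neq 0$ on $\partial\Sigma$, and since $ds(W)=\tfrac1s g_{\Real}(W,\mathbf{X})$, the field $\mathbf{X}^\top$ points transversally into $\Sigma$ with $ds(\mathbf{X}^\top)=|\mathbf{X}^\top|^2_{\Real}$ on $\partial\Sigma$. Taking a curve $\gamma$ in $\Sigma$ with $\dot\gamma(0)=\mathbf{X}^\top$, both $\mathbf{T}^\top(\gamma(t))$ and $1-s(\gamma(t))$ vanish at $t=0$, with derivatives $\nabla^{\Real}_{\mathbf{X}^\top}\mathbf{T}^\top$ and $-|\mathbf{X}^\top|^2_{\Real}$. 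Because $\mathbf{T}^\top=0$ on $\partial\Sigma$ kills the second fundamental form term $\mathbf{A}^{\Real}_\Sigma(\mathbf{X}^\top,\mathbf{T}^\top)$, this first derivative is exactly $\nabla^{\Sigma}_{\mathbf{X}^\top}\mathbf{T}^\top$, so a l'H\^opital comparison produces $\tfrac{1}{|\mathbf{X}^\top|^2_{\Real}}\big((J_{\Real}(\mathbf{X}^\top))^\top-S_{\mathbf{T}^\perp}(\mathbf{X}^\top)\big)$.

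Next I would establish the decomposition. Writing $\mathbf{X}=\mathbf{X}^\top+\mathbf{X}^\perp$ and using $g_{\Real}(\mathbf{X}^\top,\mathbf{X})=|\mathbf{X}^\top|^2_{\Real}$ on $\partial\Sigma$ shows $Z:=\mathbf{X}^\top-|\mathbf{X}^\top|^2_{\Real}\mathbf{X}$ is $g_{\Real}$--orthogonal to $\mathbf{X}$; since $g_{\Real}(\mathbf{X},\mathbf{T})=0$ and, as in Lemma \ref{IsotropIsHorizLem}, $\mathbf{X}^\top\perp T\partial\Sigma$, the vector $Z$ is also orthogonal to $\mathbf{T}$ and to $T\partial\Sigma$; in particular $Z\in\mathcal{H}$. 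The crucial point is that horizontality of $\partial\Sigma$ forces $T\partial\Sigma$ to be isotropic: since the pullback of $\hat\theta$ to $\partial\Sigma$ vanishes, so does that of $d\hat\theta$, whence $d\hat\theta(W,W')=0$ for $W,W'\in T\partial\Sigma$, and the relation $\omega_{\Real}=\tfrac12 d\theta$ on $\mathcal{H}$ gives $g_{\Real}(W,J_{\mathbb{S}}(W'))=0$, i.e. $J_{\mathbb{S}}(T\partial\Sigma)\perp T\partial\Sigma$ in $\mathcal{H}$. This orthogonality is exactly what makes the splitting $Z=J_{\mathbb{S}}(Z_1)+Z_2$, with $Z_1\in T\partial\Sigma$ and $Z_2,J_{\mathbb{S}}(Z_2)\perp T\partial\Sigma$, well defined and consistent: one determines $Z_1\in T\partial\Sigma$ by $g_{\Real}(Z_1,\cdot)=g_{\Real}(Z,J_{\mathbb{S}}(\cdot))$ on $T\partial\Sigma$ and sets $Z_2=Z-J_{\mathbb{S}}(Z_1)$. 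I expect verifying this consistency to be the main obstacle, since without isotropy the two orthogonality demands on $Z_1$ would be over-determined.

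Finally I would read off the two identities. Applying $J_{\Real}$ to the decomposition and using $J_{\Real}(\mathbf{X})=-\mathbf{T}$ together with $J_{\Real}|_{\mathcal{H}}=J_{\mathbb{S}}$ gives $J_{\Real}(\mathbf{X}^\top)=-|\mathbf{X}^\top|^2_{\Real}\mathbf{T}-Z_1+J_{\mathbb{S}}(Z_2)$; since $\mathbf{T}^\top=0$ and $J_{\mathbb{S}}(Z_2)$ is orthogonal to both $\mathbf{X}^\top$ and $T\partial\Sigma$, hence normal to $\Sigma$, projecting tangentially yields $(J_{\Real}(\mathbf{X}^\top))^\top=-Z_1$. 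For the shape operator I would decompose $S_{\mathbf{T}^\perp}(\mathbf{X}^\top)$ along the orthogonal splitting $T\Sigma=\Real\mathbf{X}^\top\oplus T\partial\Sigma$: pairing with $\mathbf{X}^\top$ gives the coefficient $g_{\Real}(\mathbf{A}^{\Real}_\Sigma(\mathbf{X}^\top,\mathbf{X}^\top),\mathbf{T})/|\mathbf{X}^\top|^2_{\Real}$, while for $W\in T\partial\Sigma$ self-adjointness of the shape operator, the boundary relation $S_{\mathbf{T}^\perp}(W)=(J_{\Real}(W))^\top$, and $(J_{\Real}(\mathbf{X}^\top))^\top=-Z_1$ give $g_{\Real}(S_{\mathbf{T}^\perp}(\mathbf{X}^\top),W)=g_{\Real}(\mathbf{X}^\top,(J_{\Real}(W))^\top)=-g_{\Real}((J_{\Real}(\mathbf{X}^\top))^\top,W)=g_{\Real}(Z_1,W)$, identifying the tangential part as $Z_1$. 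Combining the two components proves the claimed expression for $\mathbf{S}^{\Sigma}_{\mathbf{T}^\perp}(\mathbf{X}^\top)$.
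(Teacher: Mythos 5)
Your argument is correct and follows essentially the same route as the paper's proof: the same Weingarten identity $\nabla^{\Sigma}_{\mathbf{X}^\top}\mathbf{T}^\top=-(J_{\Real}(\mathbf{X}^\top))^\top+S^{\Sigma}_{\mathbf{T}^\perp}(\mathbf{X}^\top)$ combined with $\mathbf{X}^\top\cdot s=|\mathbf{X}^\top|^2_{\Real}$ (your l'H\^opital step is the paper's Taylor factorization $\mathbf{T}^\top=(1-s)\mathbf{v}$), together with the same orthogonal decomposition of $\mathbf{X}^\top$. The only cosmetic deviations are that you extract the tangential part of $S^{\Sigma}_{\mathbf{T}^\perp}(\mathbf{X}^\top)$ from self-adjointness and the boundary relation $S_{\mathbf{T}^\perp}(W)=(J_{\Real}(W))^\top$ rather than the paper's direct computation of $g_{\Real}(\mathbf{T},\mathbf{A}^{\Real}_{\Sigma}(\mathbf{X}^\top,Y))$ via $\nabla^{\Real}_Y\mathbf{T}=-J_{\Real}(Y)$, and that you spell out, via isotropy of the horizontal boundary, why the splitting $Z=J_{\mathbb{S}}(Z_1)+Z_2$ is consistent --- a point the paper asserts without comment.
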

\begin{proof}
	The hypotheses on $\Sigma$ ensure that $\partial \Sigma$ is horizontal and we can write
	$$
	\mathbf{T}^\top=(1-s)\mathbf{v}
	$$
	for a $C^2$ vector field $\mathbf{v}$ tangent to $\Sigma$ that extends in a $C^1$ fashion to the boundary.
	We compute that on the boundary 
	$$
	\nabla_{\mathbf{X}^\top}^\Sigma \mathbf{T}^\top =(\nabla^\Real_{\mathbf{X}^\top}( \mathbf{T}-\mathbf{T}^\perp))^\top= -(J_{\Real}(\mathbf{X}^\top))^\top+S_{\mathbf{T}^\perp}^\Sigma(\mathbf{X}^\top).
	$$
	As $\mathbf{X}^\top \cdot s= |\mathbf{X}^\top|^2$ on the boundary we verify that
	$$
	\mathbf{v}= -\frac{1}{|\mathbf{X}^\top|^2} \left(-(J_{\Real}(\mathbf{X}^\top))^\top+S_{\mathbf{T}^\perp}^\Sigma(\mathbf{X}^\top)\right).
	$$
	
	On $\partial \Sigma$ we may write
	$$
	\mathbf{X}^\top =|\mathbf{X}^\top|^2 \mathbf{X}+ {Z}=|\mathbf{X}^\top|^2 \mathbf{X}+J_{\mathbb{S}}(Z_1)+Z_2
	$$
	where $Z$ is tangent to $\mathbb{S}^{2n+1}=\partial \mathbb{B}^{2n+2}$ and thus orthogonal to $\partial \Sigma$.  The hypotheses on $\Sigma$ ensure that $\mathbf{X}^\top$, and, thus ${Z}$, are orthogonal to $\mathbf{T}$ and $\partial \Sigma$.  Hence,  we may further decompose $Z$ so that $Z_1$ is tangent to $\partial \Sigma$ and both $Z_2$ and $J_{\mathbb{S}}(Z_2)$ are orthogonal to $\partial \Sigma$.  If $\partial \Sigma$ is Legendrian, then $Z_2$ is zero.
	Clearly, along $\partial \Sigma$,
	$$
	(J_{\Real}(\mathbf{X}^\top))^\top= (-|\mathbf{X}^\top|^2 \mathbf{T}+ J_{\mathbb{S}}(Z))^\top=-Z_1.
	$$

	
For $Y$ tangent to $\partial \Sigma$, the hypotheses on $\Sigma$ ensure
	\begin{align*}
		g_{\Real}(\mathbf{T}, \mathbf{A}_\Sigma^\Real(\mathbf{X}^\top, {Y}))&= g_{\Real}(\mathbf{T}, \nabla_{{Y}}^\Real \mathbf{X}^\top)=g_{\Real}(\mathbf{T}, \nabla_{{Y}}^\Real \left( |\mathbf{X}^\top|_\Real^2 \mathbf{X}+ Z\right) )\\
		&= g_{\Real}(\mathbf{T}, \nabla_{{Y}}^\Real  Z )= {Y} \cdot g_\Real(\mathbf{T}, Z) - g_{\Real}(\nabla^\Real_Y \mathbf{T}, Z)\\
		&= g_{\Real}(J_{\Real}(Y), Z)=-g_{\Real}(J_{\Real}(Z), Y)\\
		&=-g_{\Real}((J_{\mathbb{S}}(Z))^\top, Y)=g_{\Real}(Z_1, Y).
	\end{align*}
	Hence, along $\partial \Sigma$, we have
	$$
	\mathbf{S}_{\mathbf{T}^\perp}^\Sigma(\mathbf{X}^\top)=\mathbf{S}_{\mathbf{T}}^\Sigma(\mathbf{X}^\top)=g_{\Real}(\mathbf{T}, \mathbf{A}_\Sigma^\Real (\mathbf{X}^\top, \mathbf{X}^\top))\frac{\mathbf{X}^\top}{|\mathbf{X}^\top|_{\Real}^2}+Z_1.
	$$
\end{proof}

We now prove Theorem \ref{main1}
\begin{proof}[Proof of Theorem \ref{main1}]
Let us choose a modified Bergman compactification, $\tilde{\Sigma}$, of $\Sigma$.  The hypotheses ensure that $\tilde{\Sigma}$ is $C^2$ up to $\partial \mathbb{B}^{2n+2}$, meets the boundary transversally, has $\partial \tilde{\Sigma}$ horizontal, and  is minimal with respect to ${g}_{\tilde{B}}$.  For simplicity, we will write $\Sigma$ instead of $\tilde{\Sigma}$ for the remainder of the proof. 

Our hypotheses ensure that near the boundary
\begin{equation}\label{TtopEqn}
\mathbf{T}^\top=g_{\Real}(\mathbf{T}^\top, \mathbf{X}) \frac{\mathbf{X}^\top}{|\mathbf{X}^\top|_\Real^2}+ \mathbf{V}
\end{equation}
where $\mathbf{V}$ is tangent to $\Sigma$, orthogonal to $\mathbf{X}^\top$, and vanishes along the boundary.   This is well defined as the hypotheses ensure that $\mathbf{X}^\top\neq 0$ along the boundary.

Using the minimality of $\Sigma$ along with the fact that $\Sigma$ is $C^2$ up to the boundary, the second formula of Proposition \ref{2ndFFMCProp} can be rewritten as
\begin{align*}
	0&= (	(1-s^2)^2+4|\mathbf{T}^\top|_\Real^2)\left(\frac{1-s^2}{2} g_{\Real} (\mathbf{H}_\Sigma^\Real, \mathbf{T})+\left( m+1+\frac{2}{1+s^2}\right) g_{\Real}(\mathbf{T}^\top, \mathbf{X})\right)\\
	&+2(1-s^2)\left( \frac{1-s^2}{2} g_\Real(\mathbf{T}^\top, \mathbf{X}) - g_\Real(\mathbf{A}_\Sigma^\Real(\mathbf{T}^\top, \mathbf{T}^\top), \mathbf{T})\right). 
	\end{align*}
From this expression one obtains that, near $\partial \Sigma$,
\begin{align*}
4(m+2)|\mathbf{T}^\top|_\Real^2 g_{\Real}(\mathbf{T}^\top, \mathbf{X})&=O(1-s).
\end{align*}
Hence, on $\partial \Sigma$, either $\mathbf{T}^\top=\mathbf{0}$ or $g_{\Real}(\mathbf{T}^\top, \mathbf{X})=0$.  In the latter case, \eqref{TtopEqn}  implies it is still true that $\mathbf{T}^\top=\mathbf{0}$ on $\partial \Sigma$ and, thus, there is a vector field, $\mathbf{v}$,  tangent to $\Sigma$ such that
$$
\mathbf{T}^\top=(1-s) \mathbf{v}.
$$

From the first formula of Proposition \ref{2ndFFMCProp}, it follows that near $\partial \Sigma$, one has
\begin{align*}
	\mathbf{0}&= \frac{2}{1+ |\mathbf{v}|_\Real^2} (J_{\Real}(\mathbf{v}))^{\tilde{N}}+ \left( m+1 - \frac{1}{1+|\mathbf{v}|_\Real^2}\right) \mathbf{X}^{\tilde{N}}+o(1).
\end{align*}
Hence, along the boundary 
$$
\mathbf{0}=2 (J_{\Real}(\mathbf{v}))^{\tilde{N}}+\left( (m+1)|\mathbf{v}|_\Real^2+m\right) \mathbf{X}^{\tilde{N}}.
$$
As we have shown $\mathbf{T}^{\perp}=\mathbf{T}$ on $\partial \Sigma$, we may appeal to Lemma \ref{BoundaryTtanLem} to see
$$
\mathbf{X}^{\tilde{N}}=\mathbf{X}^\perp=\mathbf{X}-\mathbf{X}^\top=|\mathbf{X}^\perp|_\Real^2 \mathbf{X}- Z= |\mathbf{X}^\perp|_\Real^2 \mathbf{X}-J_{\mathbb{S}}(Z_1)-Z_2
$$
where $Z, Z_1$ and $Z_2$ are as in the statement of Lemma \ref{BoundaryTtanLem}.
By Lemma \ref{BoundaryTtanLem},  for appropriate $\beta$, on $\partial \Sigma$ one has
$$
\mathbf{v}= -\frac{2}{|\mathbf{X}^\top|^2}  J_{\mathbb{S}}(Z_1) +\beta\mathbf{X}=-\frac{2}{|\mathbf{X}^\top|^2} Z_1 +\beta (|\mathbf{X}^\top|^2 \mathbf{X}+J_{\mathbb{S}}(Z_1)+Z_2).
$$
Hence, 
\begin{align*}
(J_{\Real}(\mathbf{v}))^{\tilde{N}}&=\left(-\frac{2}{|\mathbf{X}^\top|^2}  J_{\mathbb{S}}(Z_1) +\beta (|\mathbf{X}^\top|^2 \mathbf{T}-Z_1+J_{\mathbb{S}}(Z_2))\right)^{\tilde{N}} \\
&= -\frac{2}{|\mathbf{X}^\top|^2}  J_{\mathbb{S}}(Z_1)^\perp +\beta (J_{\mathbb{S}}(Z_2))^\perp.
\end{align*}
Plugging this into the previous identity we obtain
$$
\mathbf{0}= -\frac{4}{|\mathbf{X}^\top|^2}  J_{\mathbb{S}}(Z_1)^\perp +2\beta (J_{\mathbb{S}}(Z_2))^\perp+\left( (m+1)|\mathbf{v}|^2+m\right) \mathbf{X}^\perp.
$$
As $J_{\mathbb{S}}(Z_2)$ is, by construction, orthogonal to $\partial \Sigma$, $\mathbf{X}^\perp$,  $\mathbf{X}^\top$, and $\mathbf{T}$, one has
$$
J_{\mathbb{S}}(Z_2)^{\tilde{N}}= J_{\mathbb{S}}(Z_2)^\perp = J_{\mathbb{S}}(Z_2).
$$
Moreover, as $J_{\mathbb{S}}(Z_1)$ is orthogonal to $\partial \Sigma$, 
$$
(J_{\mathbb{S}}(Z_1))^\perp  =J_{\mathbb{S}}(Z_1)-g_{\Real}(J_{\mathbb{S}}(Z_1), \mathbf{X}^\top) \frac{\mathbf{X}^\top}{|\mathbf{X}^\top|^2},
$$
which immediately implies that $J_{\mathbb{S}}(Z_2)$ is also orthogonal to $(J_{\mathbb{S}}(Z_1))^\perp$.
Hence,
$$
\beta J_{\mathbb{S}}(Z_2)=\mathbf{0}=-\frac{4}{|\mathbf{X}^\top|^2}  J_{\mathbb{S}}(Z_1)^\perp + \left( (m+1)|\mathbf{v}|^2+m\right) \mathbf{X}^\perp.
$$
This implies $\beta J_{\mathbb{S}}(Z_2)=0$ and 
\begin{align*}
0&=-\frac{4}{|\mathbf{X}^\top|^2}  |J_{\mathbb{S}}(Z_1)^\perp|^2 + \left( (m+1)|\mathbf{v}|^2+m\right) g_{\Real}(\mathbf{X}^\perp, J_{\mathbb{S}}(Z_1))\\
& = -\frac{4}{|\mathbf{X}^\top|^2}  |J_{\mathbb{S}}(Z_1)^\perp|^2 -\left( (m+1)|\mathbf{v}|^2+m\right)|J_{\mathbb{S}}(Z_1)|^2.
\end{align*}
It follows that $Z_1=J_{\mathbb{S}}(Z_1)=(J_{\mathbb{S}}(Z_1))^\perp=\mathbf{0}$.  Hence,
$$
\mathbf{0}=\left( (m+1)|\mathbf{v}|^2+m\right) \mathbf{X}^\perp
$$
and so $\mathbf{X}^\perp=0$.  In particular, as a submanifold of $\mathbb{CH}^{n+1}$, $\Sigma$ is asymptotically quasi-normal. 
Moreover, along $\partial \Sigma$, one has a function $\alpha$ so
$$
\mathbf{v}=-\frac{1}{|\mathbf{X}^\top|^2}S_{\mathbf{T}^\perp}^\Sigma(\mathbf{X}^\top)=-\mathbf{S}_{\mathbf{T}}^\Sigma(\mathbf{X})=g_{\Real}(\mathbf{T}, \mathbf{A}_\Sigma^\Real(\mathbf{X}^\top, \mathbf{X}^\top)) \mathbf{X}^\top=\alpha \mathbf{X}^\top.
$$

As $\Sigma$ meets $\partial \mathbb{B}^{2n+2}$ orthogonally, for $X,Y$ tangent to $\partial \Sigma$, 
$$
 \mathbf{A}_{\partial \Sigma}^\Real(X,Y)=(\nabla_X^\Real Y)^\perp=\mathbf{A}_\Sigma^\Real(X,Y).
 $$
Moreover, as $\mathbf{T}^\top=0$,  for such $X,Y$,
\begin{align*}
g_{\Real}(\mathbf{T},\mathbf{A}_\Sigma^\Real(X,Y) )&=g_{\Real}(\mathbf{T}, \nabla^\Real_{X} Y)=X\cdot g_{\Real}(\mathbf{T}, Y)- g_{\Real}(\nabla_{X}^\Real \mathbf{T}, Y)\\
&=g_{\Real}(J_{\Real}(X), Y)=0.
\end{align*}
Hence, along the boundary, 
$$
\mathbf{H}_\Sigma^\Real= \mathbf{H}_{\partial \Sigma}^{\mathbb{S}}+\mathbf{A}_\Sigma^\Real(\mathbf{X}^\top, \mathbf{X}^\top)\mbox{ and } g_{\Real}(\mathbf{H}^{\Real}_{\partial \Sigma}, \mathbf{T})=g_{\mathbb{S}}(\mathbf{H}^{\mathbb{S}}_{\partial \Sigma}, \hat{\mathbf{T}})=0.
$$
It follows that,
$$g_{\Real}(\mathbf{H}_\Sigma^\Real, \mathbf{T})= g_{\Real}(\mathbf{A}_\Sigma^\Real(\mathbf{X}^\top, \mathbf{X}^\top), \mathbf{T})=g_{\Real}(\mathbf{v}, \mathbf{X})=\alpha.
$$
Using $\mathbf{T}^\top=(1-s)\mathbf{v}$, the second formula of Proposition \ref{2ndFFMCProp} reduces, on  $\partial \Sigma$, to:
\begin{align*}
0&=(1+|\mathbf{v}|^2_\Real)( g_{\Real}(\mathbf{H}_\Sigma^\Real, \mathbf{T})+(m+2) g_{\Real}(\mathbf{v}, \mathbf{X}))+ g_{\Real}(\mathbf{v}, \mathbf{X}) -g_{\Real}(\mathbf{A}_\Sigma^\Real(\mathbf{v}, \mathbf{v}), \mathbf{T})\\
&=(1+\alpha^2)( \alpha+(m+2)\alpha))+\alpha-\alpha^3=(m+2) \alpha^3+(m+4) \alpha.
\end{align*}
This is readily seen to have a solution only when $\alpha=0$.  Hence,  $\mathbf{v}=\mathbf{0}$ and $g_{\Real}(\mathbf{A}_\Sigma^\Real(\mathbf{X}^\top, \mathbf{X}^\top), \mathbf{T})=0$ along $\partial \Sigma$.  It follows from Lemma \ref{WeakAsy2StrongAsyLem} that $\Sigma$, thought of as a submanifold of $\mathbb{CH}^{n+1}$, is $C^1$-asymptotically regular and strongly asymptotically horizontal.
\end{proof}

While we do not use it elsewhere in this paper, we record some finer information about the boundary geometry of a compactified minimal surface under additional regularity hypotheses.
\begin{cor}
		Suppose that $\Sigma\subset \mathbb{CH}^{n+1}$ is an $m$-dimensional  minimal submanifold.  If $\Sigma$ is weakly $C^3$-asymptotically regular and weakly asymptotically horizontal, and $\tilde{\Sigma}\subset \bar{\mathbb{B}}^{2n+2}$ is a modified Bergman compactification of $\Sigma$, then, on $\partial \tilde{\Sigma}$,
		\begin{enumerate}
			\item $g_{\Real}(\mathbf{A}_{\tilde{\Sigma}}^\Real(X,Y), \mathbf{T})=0$ for any tangent vectors $X,Y$.
			\item $\mathbf{A}_{\tilde{\Sigma}}^\Real (\mathbf{X}^\top, Y)= \frac{1}{m+1} \mathbf{H}^\mathbb{S}_{\partial \tilde{\Sigma}} g_{\Real}(\mathbf{X}^\top, Y)$ for any tangent vector $Y$.
	\end{enumerate}
\end{cor}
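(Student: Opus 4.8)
The plan is to bootstrap from Theorem \ref{main1}. Passing to a modified Bergman compactification, which I abbreviate by $\Sigma$ as in that proof, Theorem \ref{main1} already supplies, along $\partial\Sigma$, that $\mathbf{T}^\top=\mathbf{0}$, that $\Sigma$ meets the boundary orthogonally so $\mathbf{X}^\perp=\mathbf{0}$ and $\mathbf{X}^\top=\mathbf{X}$, and, in the notation of Lemma \ref{BoundaryTtanLem}, that $Z_1=\mathbf{0}$ together with $g_\Real(\mathbf{A}_\Sigma^\Real(\mathbf{X}^\top,\mathbf{X}^\top),\mathbf{T})=0$; the stronger $C^3$ hypothesis will be used only to access second-order Taylor data at the boundary. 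Since $T_q\Sigma=\Real\mathbf{X}\oplus T_q\partial\Sigma$ at a boundary point, both claims reduce, by symmetry and bilinearity of $\mathbf{A}_\Sigma^\Real$, to understanding the three blocks $(\mathbf{X},\mathbf{X})$, $(\mathbf{X},Y)$, and $(Y,Y')$ for $Y,Y'\in T_q\partial\Sigma$.

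For claim (1) I would treat these blocks directly. For $Y,Y'$ tangent to $\partial\Sigma$ the computation in the proof of Theorem \ref{main1} gives $g_\Real(\mathbf{A}_\Sigma^\Real(Y,Y'),\mathbf{T})=g_\Real(J_\Real(Y),Y')=0$ by horizontality of $\partial\Sigma$; the $(\mathbf{X},\mathbf{X})$ block is exactly the relation $g_\Real(\mathbf{A}_\Sigma^\Real(\mathbf{X}^\top,\mathbf{X}^\top),\mathbf{T})=0$ from Theorem \ref{main1}; and the mixed block follows from Lemma \ref{BoundaryTtanLem}, which on $\partial\Sigma$ reads $\mathbf{S}_{\mathbf{T}^\perp}^\Sigma(\mathbf{X}^\top)=g_\Real(\mathbf{A}_\Sigma^\Real(\mathbf{X}^\top,\mathbf{X}^\top),\mathbf{T})\,\mathbf{X}^\top/|\mathbf{X}^\top|^2+Z_1=\mathbf{0}$, so that $g_\Real(\mathbf{A}_\Sigma^\Real(\mathbf{X}^\top,Y),\mathbf{T})=g_\Real(\mathbf{S}_{\mathbf{T}^\perp}^\Sigma(\mathbf{X}^\top),Y)=0$. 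Together these give (1). For the off-radial part of claim (2), I would view $\mathbf{X}^\perp$ as an $\Real^{2n+2}$-valued function on $\Sigma$ that vanishes identically on $\partial\Sigma$; hence its Euclidean directional derivative $D_Y\mathbf{X}^\perp$ vanishes for $Y\in T\partial\Sigma$, and since $\mathbf{A}_\Sigma^\Real(\mathbf{X}^\top,Y)=(D_Y\mathbf{X}^\top)^\perp=(Y-D_Y\mathbf{X}^\perp)^\perp=\mathbf{0}$, while the right-hand side of (2) vanishes because $g_\Real(\mathbf{X}^\top,Y)=0$, claim (2) holds for such $Y$.

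It remains to compute the radial block $\mathbf{A}_\Sigma^\Real(\mathbf{X},\mathbf{X})$ and match it to $\tfrac{1}{m+1}\mathbf{H}^\mathbb{S}_{\partial\Sigma}$; this is the heart of the argument and is where minimality and $C^3$ enter. I would assemble three ingredients on $\partial\Sigma$. First, from $\mathbf{A}_\Sigma^\Real(\mathbf{X}^\top,\mathbf{X}^\top)=-(D_{\mathbf{X}^\top}\mathbf{X}^\perp)^\perp$ and $\mathbf{X}^\top\cdot s=|\mathbf{X}^\top|^2\to1$, one gets the expansion $\mathbf{X}^\perp=(1-s)\mathbf{A}_\Sigma^\Real(\mathbf{X},\mathbf{X})+o(1-s)$, so that $\mathbf{X}^{\tilde{N}}=\mathbf{X}^\perp+o(1-s)$. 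Second, Theorem \ref{main1} shows $\mathbf{T}^\top=(1-s)\mathbf{v}$ with $\mathbf{v}$ vanishing on $\partial\Sigma$, hence $\mathbf{T}^\top=(1-s)^2\mathbf{w}+o((1-s)^2)$ for a boundary vector field $\mathbf{w}$; the key structural identity to establish is $(J_\Real\mathbf{w})^\perp=\mathbf{A}_\Sigma^\Real(\mathbf{X},\mathbf{X})$. Third, I would substitute $\mathbf{T}^\top=O((1-s)^2)$ into the first identity of Proposition \ref{2ndFFMCProp}, impose $\mathbf{H}_\Sigma^h=\mathbf{0}$, and extract the order-$(1-s)^2$ coefficient; using $1-s^2\sim2(1-s)$ and the first expansion, the normal terms balance to $\mathbf{H}_\Sigma^\Real=m\,\mathbf{A}_\Sigma^\Real(\mathbf{X},\mathbf{X})+2(J_\Real\mathbf{w})^\perp$. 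Combining this with the trace identity $\mathbf{H}_\Sigma^\Real=\mathbf{A}_\Sigma^\Real(\mathbf{X}^\top,\mathbf{X}^\top)+\mathbf{H}^\mathbb{S}_{\partial\Sigma}$ from Theorem \ref{main1} and the structural identity yields $(m+1)\mathbf{A}_\Sigma^\Real(\mathbf{X},\mathbf{X})=\mathbf{H}^\mathbb{S}_{\partial\Sigma}$, which is claim (2) in the radial direction; the general case follows by linearity from the two radial computations.

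The main obstacle is the structural identity $(J_\Real\mathbf{w})^\perp=\mathbf{A}_\Sigma^\Real(\mathbf{X},\mathbf{X})$. Writing $\mathbf{T}^\top=-(J_\Real\mathbf{X}^\top)^\top-(J_\Real\mathbf{X}^\perp)^\top$, each summand is only first order in $1-s$ and their first-order parts cancel (consistent with $\mathbf{T}^\top=O((1-s)^2)$ and with $Z_1=\mathbf{0}$), so $\mathbf{w}$ is genuinely a second Taylor coefficient. Extracting it requires differentiating once more the tangential identity $\nabla^\Sigma_{\mathbf{X}^\top}\mathbf{T}^\top=-(J_\Real\mathbf{X}^\top)^\top+\mathbf{S}_{\mathbf{T}^\perp}^\Sigma(\mathbf{X}^\top)$ from Lemma \ref{BoundaryTtanLem} and carefully tracking how $J_\Real$ mixes the tangential, normal, and $\mathbf{T}$ directions under the splitting, using claim (1) to annihilate the $\mathbf{T}$-components. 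This second-order bookkeeping is exactly what forces the weak $C^3$ hypothesis: it is what guarantees that $\mathbf{T}^\top$ is $C^2$ up to the boundary, so that $\mathbf{w}$ exists and is continuous, and likewise that $\mathbf{A}_\Sigma^\Real$ is $C^1$ up to $\partial\Sigma$, legitimizing the order-$(1-s)^2$ extraction from Proposition \ref{2ndFFMCProp}.
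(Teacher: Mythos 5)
Your proposal is correct and follows essentially the same route as the paper: claim (1) from the second-order vanishing of $\mathbf{T}^\top$ together with Lemma \ref{BoundaryTtanLem}, and claim (2) by balancing the order-$(1-s)$ terms in the first identity of Proposition \ref{2ndFFMCProp} against the expansion $\mathbf{X}^{\tilde{N}}=(1-s)(\mathbf{A}_{\tilde\Sigma}^\Real(\mathbf{X}^\top,\mathbf{X}^\top))^{\tilde{N}}+o(1-s)$ and the trace identity $\mathbf{H}^\Real_{\tilde\Sigma}=\mathbf{A}_{\tilde\Sigma}^\Real(\mathbf{X}^\top,\mathbf{X}^\top)+\mathbf{H}^{\mathbb{S}}_{\partial\tilde\Sigma}$. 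The one step you leave as an outline, the identity $(J_{\Real}(\mathbf{w}))^{\tilde{N}}=\mathbf{A}_{\tilde\Sigma}^\Real(\mathbf{X}^\top,\mathbf{X}^\top)$, is obtained in the paper exactly as you describe, by computing $\mathbf{w}=\tfrac12\nabla^{\tilde\Sigma}_{\mathbf{X}^\top}\nabla^{\tilde\Sigma}_{\mathbf{X}^\top}\mathbf{T}^\top$ from a second differentiation of the relation in Lemma \ref{BoundaryTtanLem} and using item (1) to discard the $\mathbf{T}$-components.
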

\begin{proof}
Our hypotheses ensure that $\tilde{\Sigma}$ is $C^3$ up to $\partial \mathbb{B}^{2n+2}$, meets the boundary transversally, has $\partial \tilde{\Sigma}$ horizontal and  is minimal with respect to $g_{\tilde{B}}$.  For simplicity, we will write $\Sigma$ instead of $\tilde{\Sigma}$ for the remainder of the proof.  Observe, that by Theorem \ref{main1}, there is a vector field, $\mathbf{w}$, tangent to $\Sigma$ such that 
$$
\mathbf{T}^\top=(1-s)^2 \mathbf{w}.
$$
This also establishes (1) as in Lemma \ref{WeakAsy2StrongAsyLem}.
	
The first formula of Proposition \ref{2ndFFMCProp} implies that near $\partial \Sigma$, 
	\begin{align*}
		\mathbf{0}&=  (1-s)(\mathbf{H}_{ \Sigma}^\Real)^{\tilde{N}} -2 (1-s) (J_{\Real}(\mathbf{w}))^{\tilde{N}}-m \mathbf{X}^{\tilde{N}}+o((1-s)).
	\end{align*}
	To go further it is helpful to observe that
	\begin{align*}
		\mathbf{X}^{\tilde{N}}&= \mathbf{X}^\perp-g_{\Real}(\mathbf{X}, \mathbf{T}^\perp) \frac{\mathbf{T}^\perp}{|\mathbf{T}^\perp|_{\Real}^2}=\mathbf{X}^\perp+g_{\Real}(\mathbf{X}, \mathbf{T}^\top) \frac{\mathbf{T}^\perp}{|\mathbf{T}^\perp|_{\Real}^2}\\
		&= \mathbf{X}^\perp+ O((1-s)^2).
	\end{align*}
	As $(\nabla_{\mathbf{X}^\top}^\Real\mathbf{X}^\perp)^\perp=-\mathbf{A}_{\Sigma}^\Real(\mathbf{X}^\top, \mathbf{X}^\top)$,  it follows that near the boundary
	$$
	\mathbf{X}^{\tilde{N}}= (1-s)(\mathbf{A}_{\Sigma}^{\Real}(\mathbf{X}^\top, \mathbf{X}^\top))^{\tilde{N}}+o(1-s).
	$$
	Hence, we conclude that
	$$
	\mathbf{0}= \left( \mathbf{H}_\Sigma^\Real-2J_{\Real}(\mathbf{w})-m \mathbf{A}^\Real_\Sigma(\mathbf{X}^\top, \mathbf{X}^\top)\right)^{\tilde{N}}.
	$$
	From these computations and item (1), this means that, on $\partial \Sigma$,
	$$
	2(J_{\Real}(\mathbf{w}))^{\tilde{N}}=\left((1-m) \mathbf{A}_\Sigma^\Real(\mathbf{X}^\top, \mathbf{X}^\top)+\mathbf{H}_{\partial \Sigma}^{\mathbb{S}}\right)^{\tilde{N}}=(1-m) \mathbf{A}_\Sigma^\Real(\mathbf{X}^\top, \mathbf{X}^\top)+\mathbf{H}_{\partial \Sigma}^{\mathbb{S}}.
	$$
	
	To complete the proof we need to compute $\mathbf{w}$ in terms of the geometry of $\Sigma$. To that end, observe that
	$$
	\nabla_{\mathbf{X}^\top}^\Real \mathbf{X}^\top= \mathbf{X}^\top+\mathbf{S}_{\mathbf{X}^\perp}^\Sigma(\mathbf{X}^\top)+\mathbf{A}_\Sigma^\Real(\mathbf{X}^\top, \mathbf{X}^\top).
	$$ 
	Hence,
	\begin{align*}
		\nabla^\Sigma_{\mathbf{X}^\top} (J_{\Real}(\mathbf{X}^\top))^\top&= \left( \nabla_{\mathbf{X}^\top}^\Real \left(J_{\Real}(\mathbf{X}^\top) - (J_{\Real}(\mathbf{X}^\top))^\perp\right) \right)^\top\\
		&=(J_{\Real}(\mathbf{X}^\top+\mathbf{S}_{\mathbf{X}^\perp}^\Sigma(\mathbf{X}^\top)+\mathbf{A}_\Sigma^\Real(\mathbf{X}^\top, \mathbf{X}^\top)))^\top+\mathbf{S}_{(J_{\Real}(\mathbf{X}^\top))^\perp} ^\Sigma(\mathbf{X}^\top).
	\end{align*}
	The properties already established about the boundary geometry of $\Sigma$ yield,
	$$
	\nabla^\Sigma_{\mathbf{X}^\top} (J_{\Real}(\mathbf{X}^\top))^\top=J_{\Real}(\mathbf{A}_\Sigma^\Real(\mathbf{X}^\top, \mathbf{X}^\top))-\mathbf{S}_{\mathbf{T}^\perp}^\Sigma(\mathbf{X}^\top)=J_{\Real}(\mathbf{A}_\Sigma^\Real(\mathbf{X}^\top, \mathbf{X}^\top)).
	$$
	We further compute that
	\begin{align*}
		g_\Real(&\nabla_{\mathbf{X}^\top} (\mathbf{S}_{\mathbf{T}^\perp}^\Sigma(\mathbf{X}^\top)), Y) = \mathbf{X}^\top \cdot g((\mathbf{S}_{\mathbf{T}^\perp}^\Sigma(\mathbf{X}^\top), Y )- g((\mathbf{S}_{\mathbf{T}^\perp}^\Sigma(\mathbf{X}^\top), \nabla_{\mathbf{X}^\top} Y)\\
		&=\mathbf{X}^\top \cdot g_{\Real}( \mathbf{T}^\perp, \mathbf{A}_{\Sigma}^\Real(\mathbf{X}^\top, Y))- g((\mathbf{S}_{\mathbf{T}^\perp}^\Sigma(\mathbf{X}^\top), \nabla_{\mathbf{X}^\top} Y)\\
		&= g_{\Real} (-(J_{\Real}(\mathbf{X}^\top))^\perp-\mathbf{A}_{\Sigma}^\Real(\mathbf{X}^\top, \mathbf{T}^\top), \mathbf{A}_{\Sigma}^\Real(\mathbf{X}^\top, Y))\\
		&+g_{\Real} (\mathbf{T}^\perp, (\nabla_{\mathbf{X}^\top}^\perp \mathbf{A}_\Sigma^\Real)(\mathbf{X}^\top, Y) +g_{\Real}(\mathbf{T}^\perp, \mathbf{A}_\Sigma^\Real( \nabla_{\mathbf{X}^\top}^\Real \mathbf{X}^\top, Y))\\
		&=  g_{\Real} (-(J_{\Real}(\mathbf{X}^\top))^\perp-\mathbf{A}_{\Sigma}^\Real(\mathbf{X}^\top, \mathbf{T}^\top), \mathbf{A}_{\Sigma}^\Real(\mathbf{X}^\top, Y))+g_{\Real} (\mathbf{T}^\perp, (\nabla_{Y}^\perp \mathbf{A}_\Sigma^\Real)(\mathbf{X}^\top, \mathbf{X}^\top) )\\
		&+g_{\Real}(\mathbf{T}^\perp, \mathbf{A}_\Sigma^\Real(  \mathbf{X}^\top+\mathbf{S}_{\mathbf{X}^\perp}^\Sigma(\mathbf{X}^\top), Y))\\
		&=g_{\Real} (-(J_{\Real}(\mathbf{X}^\top))^\perp-\mathbf{A}_{\Sigma}^\Real(\mathbf{X}^\top, \mathbf{T}^\top), \mathbf{A}_{\Sigma}^\Real(\mathbf{X}^\top, Y))\\
		&+ g_{\Real}(\mathbf{T}^\perp, \mathbf{A}_\Sigma^\Real(  \mathbf{X}^\top+\mathbf{S}_{\mathbf{X}^\perp}^\Sigma(\mathbf{X}^\top), Y))\\
		&+Y\cdot g_\Real(\mathbf{T}^\perp, \mathbf{A}_{\Sigma}^\Real(\mathbf{X}^\top, \mathbf{X}^\top)) + g_\Real((J_{\Real}(Y))^\perp+\mathbf{A}_{\Sigma}^\Real(Y, \mathbf{T}^\top), \mathbf{A}_{\Sigma}^\Real(\mathbf{X}^\top, \mathbf{X}^\top)).
	\end{align*}
	On the boundary, as $\mathbf{T}^\top=\mathbf{0}$ and as item (1) holds, this simplifies to
	\begin{align*}
		g_{\Real}(\nabla_{\mathbf{X}^\top}^\Real &(\mathbf{S}_{\mathbf{T}^\perp}^\Sigma(\mathbf{X}^\top)), Y) =-g_{\Real}(J_{\Real}(\mathbf{A}_{\Sigma}^\Real(\mathbf{X}^\top, \mathbf{X}^\top), Y)+Y \cdot  g_{\Real}(\mathbf{T}^\perp, \mathbf{A}_{\Sigma}^\Real(\mathbf{X}^\top, \mathbf{X}^\top)) \\
		&= -g_{\Real}(J_{\Real}(\mathbf{A}_{\Sigma}^\Real(\mathbf{X}^\top, \mathbf{X}^\top), Y)+\mathbf{X}^\top \cdot  g_{\Real}(\mathbf{T}^\perp, \mathbf{A}_{\Sigma}^\Real(\mathbf{X}^\top, \mathbf{X}^\top)) g_{\Real}(Y, \mathbf{X}) \mathbf{X},
	\end{align*}
	where we used $\mathbf{X}^\top=\mathbf{X}$ on $\partial \Sigma$. 
	Hence, on the boundary,
	\begin{align*}
		\mathbf{w}&= \frac{1}{2} \nabla_{\mathbf{X}^\top}^\Sigma\nabla_{\mathbf{X}^\top}^\Sigma \mathbf{T}^\top=-J_{\Real}(\mathbf{A}_{\Sigma}^\Real(\mathbf{X}^\top, \mathbf{X}^\top))+\frac{1}{2}\mathbf{X}^\top \cdot  g_{\Real}(\mathbf{T}^\perp, \mathbf{A}_{\Sigma}^\Real(\mathbf{X}^\top, \mathbf{X}^\top))  \mathbf{X}
	\end{align*}
and so
	\begin{align*}
		(J_{\Real}(\mathbf{w}))^{\tilde{N}}= \left( \mathbf{A}_{\Sigma}^\Real(\mathbf{X}^\top, \mathbf{X}^\top)-\frac{1}{2}\mathbf{X}^\top \cdot  g_{\Real}(\mathbf{T}^\perp, \mathbf{A}_{\Sigma}^\Real(\mathbf{X}^\top, \mathbf{X}^\top)) \mathbf{T}\right)^{\tilde{N}}=\mathbf{A}_{\Sigma}^\Real(\mathbf{X}^\top, \mathbf{X}^\top)
	\end{align*}
where we used item (1).  Hence, 
	$$
	2\mathbf{A}_{\Sigma}^{\Real}(\mathbf{X}^\top, \mathbf{X}^\top)=(1-m) \mathbf{A}_\Sigma^\Real(\mathbf{X}^\top, \mathbf{X}^\top)+\mathbf{H}_{\partial \Sigma}^{\mathbb{S}},
	$$
	from which item (2) with $Y=\mathbf{X}^\top$ follows.  Item (2) for $Y$ orthogonal to $\mathbf{X}^\top$ is straightforward.
\end{proof}

\section{Colding-Minicozzi entropy in $\mathbb{CH}^{n+1}$}
\label{EntropySec}
In this section, we prove Theorem \ref{main2}.  To do so we first introduce some notation.  Let ${\Upsilon}, {\Upsilon}':\mathbb{CH}^{n+1}\to \mathbb{B}^{2n+2}$ be two choices of Bergman compactifications with the property that ${\Upsilon}(p_0)={\Upsilon}'(p_0')=\mathbf{0}$. By construction, there is an element $\Phi\in Aut_{g_{{B}}}^+(\mathbb{B}^{2n+2})$ such that ${\Upsilon}'=\Phi\circ {\Upsilon}$.  Moreover, when $p_0=p_0'$, this element $\Phi$ is induced by an element of $\mathbf{U}(n+1)$ -- i.e., it is an isometry of $g_{\Real}$. As a consequence,  such compactifications endow $\partial_{\infty} \mathbb{CH}^{n+1}$ with a well-defined Riemannian metric, which is obtained from the standard metric $g_{\mathbb{S}}$ on $\mathbb{S}^{2n+1}=\partial \mathbb{B}^{2n+2}$.  While this metric depends on $p_0$, it is otherwise independent of the choices in the compactification. Let us denote this metric by $g_{\partial_\infty \mathbb{CH}}^{p_0}$. 
Clearly, $g_{\partial_\infty \mathbb{CH}}^{p_0}$ and $g_{\partial_\infty \mathbb{CH}}^{q_0}$ are related by the action of $Aut_{CR}(\mathbb{S}^{2n+1})$ for different choices of distinguished points $p_0$ and $q_0$. Therefore any geometric quantity defined on $\mathbb{S}^{2n+1}$ that is $ Aut_{CR}(\mathbb{S}^{2n+1})$ invariant is well-defined on $\partial_\infty \mathbb{CH}^{n+1}$.    Furthermore, as $\bar{\mathcal{S}}$ is an identity on $\mathbb{S}^{2n+1}$ one could also use the modified Bergman compactification to the same effect.

Fix an $m$-dimensional $C^l$ submanifold  $\Gamma\subset \partial_\infty \mathbb{CH}^n$ and let ${\Upsilon}(\Gamma)\subset \mathbb{S}^{2n+1}$ be the corresponding submanifold of the sphere associated to the Bergman compactification, $\Upsilon$.  Set
$$
Vol_{\partial_\infty \mathbb{CH}}(\Gamma,p_0)=|{\Upsilon}(\Gamma))|_{\mathbb{S}}.
$$
When $p_0'$ is a different choice of a distinguished point, then, by the above discussion there is an element, $\Psi\in \mathrm{Aut}_{CR}(\mathbb{S}^{2n+1})$ such that
$$
Vol_{\partial_\infty \mathbb{CH}}(\Gamma,p_0')=|\Psi'(\Gamma)|_{\mathbb{S}}=|\Psi({\Upsilon}(\Gamma))|_{\mathbb{S}}.
$$
Hence, the \emph{CR-volume}  of a horizontal submanifold, $\Gamma\subset \partial_\infty \mathbb{CH}^{n+1}$, defined by
$$
\lambda_{CR}[\Gamma]=\sup_{\Psi\in \mathrm{Aut}_{CR}(\mathbb{S}^{2n+1})} |\Psi({\Upsilon}(\Gamma))|_{\mathbb{S}}=\sup_{p_0\in \mathbb{CH}^{n+1}} Vol_{\partial_\infty \mathbb{CH}^n}(\Gamma,p_0)
$$
is a well-defined quantity independent of choices.  One will also obtain the same value using modified Bergman compactifications. 

We now begin the proof of Theorem \ref{main2} and first record a basic relationship between geodesic balls of the modified Bergman metric and the Euclidean metric.
\begin{lem}\label{BallLem}
  Setting $s(R)= \tanh\frac{R}{2},$ it follows that
	$${B}_R^{g_{\tilde{B}}}(\mathbf{0})=B_{s(R)}^{\Real}(\mathbf{0}).$$ 
\end{lem}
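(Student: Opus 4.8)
The plan is to reduce the computation to the radial geometry of the Poincar\'e metric. Recall from the construction of the modified Bergman metric that $g_{\tilde{B}}=g_P+\frac{16r^4}{(1-r^2)^4}\theta^2$, where $g_P=\frac{4}{(1-r^2)^2}g_{\Real}$ is the Poincar\'e metric of constant curvature $-1$ on $\mathbb{B}^{2n+2}$. The deformation term $\frac{16r^4}{(1-r^2)^4}\theta^2$ is a positive semidefinite symmetric $(0,2)$-tensor, so as quadratic forms $g_{\tilde{B}}\geq g_P$ pointwise. Consequently the induced distance functions satisfy $d_{\tilde{B}}(\mathbf{0},\cdot)\geq d_P(\mathbf{0},\cdot)$, and hence $B_R^{g_{\tilde{B}}}(\mathbf{0})\subseteq B_R^{g_P}(\mathbf{0})$.

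For the reverse inclusion, the key observation is that the deformation term annihilates the radial direction, so that it contributes nothing to the length of rays through the origin. Indeed, a Euclidean radial segment has tangent vector proportional to the position vector $\mathbf{X}$, and $\theta(\mathbf{X})=\frac{1}{r^2}g_{\Real}(\mathbf{X},\mathbf{T})=0$, since $\mathbf{T}=-J_{\Real}(\mathbf{X})$ is $g_{\Real}$-orthogonal to $\mathbf{X}$ by the compatibility of $J_{\Real}$ with $g_{\Real}$. Therefore along any such radial segment the restriction of $g_{\tilde{B}}$ coincides with that of $g_P$, and the two metrics assign it the same length. Combining $d_{\tilde{B}}(\mathbf{0},x)\leq \mathrm{length}_{g_{\tilde{B}}}(\text{segment})=\mathrm{length}_{g_P}(\text{segment})$ with the fact, established below, that the radial segment is $g_P$-minimizing, gives $d_{\tilde{B}}(\mathbf{0},x)\leq d_P(\mathbf{0},x)$. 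Together with the opposite inequality this yields $d_{\tilde{B}}(\mathbf{0},\cdot)=d_P(\mathbf{0},\cdot)$, so $B_R^{g_{\tilde{B}}}(\mathbf{0})=B_R^{g_P}(\mathbf{0})$.

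It then remains to identify the Poincar\'e ball explicitly. For a point $x$ with $|x|_{\Real}=r$ one computes the radial length $\int_0^r \frac{2}{1-t^2}\,dt=\log\frac{1+r}{1-r}$, which equals $R$ precisely when $r=\tanh\frac{R}{2}=s(R)$. Hence $B_R^{g_P}(\mathbf{0})=B_{s(R)}^{\Real}(\mathbf{0})$, and the lemma follows.

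The only point requiring care is the claim that radial segments minimize $g_P$-distance from the origin; this is the standard statement that in a rotationally symmetric metric the radial rays through the center are length-minimizing. I would justify it directly: for any curve $\gamma$ from $\mathbf{0}$ to $x$ with Euclidean radius $\rho(t)=r(\gamma(t))$, one has $|dr(\dot\gamma)|=\frac{1}{r}|g_{\Real}(\dot\gamma,\mathbf{X})|\leq |\dot\gamma|_{\Real}$, so that $\mathrm{length}_{g_P}(\gamma)=\int \frac{2}{1-\rho^2}|\dot\gamma|_{\Real}\,dt\geq \int \frac{2}{1-\rho^2}|\dot\rho|\,dt\geq \int_0^{|x|_{\Real}} \frac{2}{1-t^2}\,dt$, with equality for the radial segment. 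Everything else is a direct comparison of the two metrics exploiting that $\theta$ vanishes on the radial direction.
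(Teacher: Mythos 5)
Your proof is correct, and it takes a genuinely different route from the paper's. The paper argues that a Euclidean radial segment through $\mathbf{0}$ is a geodesic of $g_{\tilde{B}}$ (this is read off from Proposition \ref{2ndFFMCProp}: for a radial segment one has $\mathbf{H}^{\Real}=\mathbf{0}$, $\mathbf{T}^\top=\mathbf{0}$ since $\mathbf{T}\perp\mathbf{X}$, and $\mathbf{X}^{\tilde N}=\mathbf{0}$, so $\mathbf{H}^{g_{\tilde B}}=\mathbf{0}$), computes its $g_{\tilde{B}}$-length to be $\log\frac{1+s}{1-s}$, and inverts; the fact that these geodesics actually realize the distance is left implicit (it follows since $g_{\tilde{B}}$ is isometric to $g_{\mathbb{CH}}$, hence Cartan-Hadamard). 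You instead sandwich the distance function: $g_{\tilde{B}}\geq g_P$ gives $d_{\tilde{B}}\geq d_P$, while the vanishing of $\theta$ on the radial direction (from $\theta(\mathbf{X})=\tfrac{1}{r^2}g_{\Real}(\mathbf{X},\mathbf{T})=0$) gives $d_{\tilde{B}}(\mathbf{0},\cdot)\leq d_P(\mathbf{0},\cdot)$ via the radial competitor, and you verify the $g_P$-minimality of radial segments by the standard $|dr(\dot\gamma)|\leq|\dot\gamma|_{\Real}$ estimate. Your argument is more elementary and self-contained: it needs neither the second fundamental form computation of Proposition \ref{2ndFFMCProp} nor any appeal to global minimization of $g_{\tilde{B}}$-geodesics, since the minimization question is pushed entirely onto the rotationally symmetric Poincar\'e metric where you handle it by hand. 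The paper's route is shorter given that Proposition \ref{2ndFFMCProp} is already available and yields the additional information that radial lines are genuine $g_{\tilde{B}}$-geodesics, which is of independent use. Both computations of the radius, $\int_0^s\frac{2}{1-t^2}\,dt=\log\frac{1+s}{1-s}$ and its inversion $s=\tanh\frac{R}{2}$, agree.
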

\begin{proof}
	Suppose that $\ell$ is a line segment in $\mathbb{B}^{2n+2}$ with one endpoint through $\mathbf{0}$.  Proposition \ref{2ndFFMCProp} implies $\ell$ is also a geodesic of $g_{\tilde{B}}$.  The length with respect to $g_{\tilde{B}}$ is 
	$$
	R=	\int_0^s \frac{2}{1-t^2} dt = \ln\left(\frac{1+s}{1-s}\right).
	$$
	Hence,
	$$
	s(R)=\frac{e^R-1}{e^R+1}=\tanh \frac{R}{2}.
	$$
\end{proof}

It is also useful to clarify the relationship between $Vol_{\partial_\infty \mathbb{CH}}(\Gamma; p_0)$ and the geometry of submanifolds asymptotic to $\Gamma$ for varying degrees of asymptotic regularity.
\begin{lem}\label{BoundaryLem}
	Let $\Sigma \subset \mathbb{CH}^{n+1}$ be an $m$-dimensional submanifold that is weakly $C^1$-asymptotically regular and weakly asymptotically horizontal.
	For any $p_0\in \mathbb{CH}^{n+1}$,
	$$
	Vol_{\partial_\infty \mathbb{CH}}(\partial_\infty \Sigma; p_0)\leq \liminf_{r\to \infty} \frac{Vol_{\mathbb{CH}}(\Sigma\cap \partial B_{r}^{\mathbb{CH}}(p_0))}{\sinh^{m-1}(r)}
	$$
 where $B_r^{\mathbb{CH}}(p_0)$ is a ball of radius $r$ centered at $p_0$ in $\mathbb{CH}^{n+1}$.  If $\Sigma$ is $C^1$-asymptotically regular and strongly asymptotically horizontal, then
 	$$
 Vol_{\partial_\infty \mathbb{CH}}(\partial_\infty \Sigma; p_0)= \lim_{r\to \infty} \frac{Vol_{\mathbb{CH}}(\Sigma\cap \partial B_{r}^{\mathbb{CH}}(p_0))}{\sinh^{m-1}(r)}.
 $$
\end{lem}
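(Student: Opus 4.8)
The plan is to carry out the entire computation in a modified Bergman compactification $\tilde{\Sigma}=\tilde{\Sigma}_{p_0}\subset\bar{\mathbb{B}}^{2n+2}$, writing $\Gamma=\partial\tilde{\Sigma}\subset\mathbb{S}^{2n+1}$, so that $Vol_{\partial_\infty\mathbb{CH}}(\partial_\infty\Sigma;p_0)=|\Gamma|_{\mathbb{S}}$ (this value is unchanged between the Bergman and modified compactifications since $\bar{\mathcal{S}}$ restricts to the identity on $\mathbb{S}^{2n+1}$). By Lemma \ref{BallLem} the geodesic sphere $\partial B_r^{\mathbb{CH}}(p_0)$ is carried by $\tilde{\Upsilon}_{p_0}$ onto the Euclidean sphere $\{|\mathbf{z}|=\sigma\}$ with $\sigma=\tanh\frac{r}{2}$, and one computes $\sinh r=\frac{2\sigma}{1-\sigma^2}$. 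Thus $Vol_{\mathbb{CH}}(\Sigma\cap\partial B_r^{\mathbb{CH}}(p_0))$ equals the $g_{\tilde{B}}$-area of the slice $\Gamma_\sigma:=\tilde{\Sigma}\cap\{|\mathbf{z}|=\sigma\}$, which is a smooth $(m-1)$-manifold for $\sigma$ near $1$ because $\tilde{\Sigma}$ meets $\{|\mathbf{z}|=1\}$ transversally; the whole statement becomes an assertion about the behavior of this area as $\sigma\to1$.

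First I would record the restriction of $g_{\tilde{B}}$ to $\Gamma_\sigma$. Since $\Gamma_\sigma$ lies in a level set of $s=|\mathbf{z}|$ we have $ds=0$ on its tangent spaces, so using $g_{\tilde{B}}=g_P+\frac{16s^4}{(1-s^2)^4}\theta^2$ and $g_P=\frac{4}{(1-s^2)^2}g_{\Real}$ one gets
\begin{equation*}
g_{\tilde{B}}|_{\Gamma_\sigma}=A\,\eta+C\,\theta^2,\qquad A=\frac{4\sigma^2}{(1-\sigma^2)^2}=\sinh^2 r,\qquad C=A+\frac{16\sigma^4}{(1-\sigma^2)^4}.
\end{equation*}
Since $g_{\tilde{B}}|_{\Gamma_\sigma}$ is a rank-one modification of $A\eta$, the matrix determinant lemma shows that its area element equals $\sinh^{m-1}r\cdot\sqrt{1+\tfrac{C}{A}|\theta|_\eta^2}$ times the area element induced by $\eta$, where $|\theta|_\eta^2$ is the squared norm of $\theta|_{\Gamma_\sigma}$ in $\eta|_{\Gamma_\sigma}$ and $\tfrac{C}{A}=1+\tfrac{4\sigma^2}{(1-\sigma^2)^2}$. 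Dividing by $\sinh^{m-1}r$ gives the clean identity
\begin{equation*}
\frac{Vol_{\mathbb{CH}}(\Sigma\cap\partial B_r^{\mathbb{CH}}(p_0))}{\sinh^{m-1}r}=\int_{\Gamma_\sigma}\sqrt{1+\tfrac{C}{A}|\theta|_\eta^2}\;dV_\eta,
\end{equation*}
where $dV_\eta$ is the $\eta$-area element on $\Gamma_\sigma$.

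For the first (inequality) part I would simply drop the nonnegative $\theta^2$-term: as $\sqrt{1+\tfrac{C}{A}|\theta|_\eta^2}\geq1$, the identity gives $\frac{Vol_{\mathbb{CH}}(\Sigma\cap\partial B_r^{\mathbb{CH}}(p_0))}{\sinh^{m-1}r}\geq\int_{\Gamma_\sigma}dV_\eta=|\Gamma_\sigma|_\eta$. Weak $C^1$-asymptotic regularity makes $\tilde{\Sigma}$ a $C^1$ submanifold with boundary meeting $\{|\mathbf{z}|=1\}$ transversally, so the slices $\Gamma_\sigma$ converge to $\Gamma$ in $C^1$ as $\sigma\to1$; weak asymptotic horizontality means $\theta$ vanishes on $T\Gamma$, whence $\eta|_\Gamma=g_{\mathbb{S}}|_\Gamma$. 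A uniform-convergence argument then gives $|\Gamma_\sigma|_\eta\to|\Gamma|_{\mathbb{S}}$, and taking $\liminf_{r\to\infty}$ yields the first claim.

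For equality I must upgrade this to show the correction factor tends to $1$, i.e. $\tfrac{C}{A}|\theta|_\eta^2\to0$. This is the delicate point and the main obstacle, because $\tfrac{C}{A}\sim(1-\sigma)^{-2}$ diverges, so the naive bound $\theta=o(1)$ on slice tangents coming from mere $C^1$-regularity is useless; one needs the sharp rate $\theta=O((1-\sigma)^2)$ on $\Gamma_\sigma$, exactly beating the divergence. Here the stronger hypotheses enter: by Lemma \ref{WeakAsy2StrongAsyLem}, $C^1$-asymptotic regularity forces asymptotic quasi-normality, so $\tilde{\Sigma}$ meets $\{|\mathbf{z}|=1\}$ orthogonally, and transporting the $C^1$-Bergman parametrization through $\mathcal{S}$ as in the proof of that lemma produces a parametrization of the slices of the form $\tilde{\Psi}(\sigma,q)=\sigma\,\mathbf{X}(q)+O((1-\sigma)^2)$. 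Consequently the slice tangent vectors satisfy $\tilde{v}_\sigma=\sigma\,\partial_q\mathbf{X}(q)+O((1-\sigma)^2)$ with $\partial_q\mathbf{X}(q)\in T_q\Gamma$; since strong asymptotic horizontality makes $\Gamma$ horizontal, the leading term of $\theta(\tilde{v}_\sigma)=\tfrac{1}{\sigma^2}g_{\Real}(\tilde{v}_\sigma,\mathbf{T})$ vanishes and $\theta(\tilde{v}_\sigma)=O((1-\sigma)^2)$. Therefore $\tfrac{C}{A}|\theta|_\eta^2=O((1-\sigma)^2)\to0$ uniformly on $\Gamma$, the correction factor tends uniformly to $1$, and the quantity in the displayed identity converges to $|\Gamma|_{\mathbb{S}}=Vol_{\partial_\infty\mathbb{CH}}(\partial_\infty\Sigma;p_0)$, giving equality.
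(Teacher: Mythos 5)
Your argument is essentially the paper's: work in the modified Bergman compactification, use Lemma \ref{BallLem} to identify geodesic spheres with Euclidean spheres, split $g_{\tilde B}$ on the slices into a part conformal to the Levi/round metric plus the singular $\theta^2$ term, drop that term for the lower bound, and for equality use quasi-normality plus strong horizontality (via Lemma \ref{WeakAsy2StrongAsyLem}) to get the $O((1-\sigma)^2)$ decay of $\theta$ on slice tangents that beats the $(1-\sigma)^{-2}$ blow-up. The only differences are cosmetic: you compute the Jacobian exactly via the rank-one determinant formula and compare to $A\eta$, whereas the paper works with two-sided inequalities of the induced bilinear forms and compares to the conformally rescaled Euclidean metric $\tfrac{4}{(1-s^2)^2}g_{\Real}$; both land on the same limit $|\partial_\infty\Sigma|_{\mathbb{S}}$.
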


\begin{proof}
	Let $\tilde{\Upsilon}_{p_0}: \mathbb{CH}^{n+1}\to \mathbb{B}^{2n+2}$ be a modified Bergman compactification sending $p_0$ to $\mathbf{0}$.  By Lemma \ref{BallLem}, one has
	$$
	\tilde{\Upsilon}_{p_0} (\partial B_r^{\mathbb{CH}}(p_0))=\partial B_{r}^{g_{\tilde{B}}}(0)=\partial B_{s}(0)
	$$
	 where $r=\ln \left( \frac{1+s}{1-s}\right) $.
	
	Set $\Sigma_r=\Sigma\cap \partial B_{r}^{\mathbb{CH}}(p_0)$
	and let $\Sigma'=\tilde{\Upsilon}_{p_0}(\Sigma)$.  Clearly, $\tilde{\Upsilon}_{p_0}(\Sigma_r)=\Sigma'\cap \partial B_s(0)=\Sigma'_s$.  Let $g_r$ be the metric on $\Sigma_r$ induced from $g_{\mathbb{CH}}$, $g_{s}'$ be the metric induced on $\Sigma'_s$ from $g_{\Real}$, and $g_s''$ be the metric on $\Sigma'_s$ induced from $g_{\tilde{B}}$.  From the form of $g_{\tilde{B}}$ and properties of $\Sigma'$ near the boundary, that follow from $\Sigma$ being weakly $C^1$-asymptotically regular, one has
\begin{equation}\label{ComparisionMetricEqn}
	g_s''=\frac{4}{(1-s^2)^2} g_s' + \frac{4s^4}{(1-s^2)^4} (i_{\Sigma'_s}^*\theta)^2\geq \frac{4}{(1-s^2)^2} g_s',
\end{equation}
	where the inequality is in the sense of symmetric bilinear forms.
	
	Hence,
	$$
	\tilde{\Upsilon}_{p_0}^*g_{s}'\leq  \left( \frac{e^r-1}{e^r+1}\right)^2\sinh^{-2}(r)	\tilde{\Upsilon}_{p_0}^*g_{s}''=\frac{1}{(1+\cosh(r))^2} g_r.
	$$
	In particular, as $\Sigma_s'$ is $(m-1)$-dimensional, one has
	$$
	|\Sigma_s'|_{\Real}\leq \frac{|\Sigma_r|_{\mathbb{CH}}}{(1+\cosh(r))^{m-1}}.
	$$
	The definition of $C^1$-regular asymptotic boundary ensures that
	$$
	\lim_{s\to 1} |\Sigma_s'|_{\Real}=|\partial \Sigma' |_{\Real}=|\partial \Sigma' |_{\mathbb{S}}= Vol_{\partial_\infty \mathbb{CH}}(\partial_\infty \Sigma; p_0).
	$$
	As $s\to 1$,  $r\to \infty$, and so, using $\lim_{r\to \infty} \frac{\sinh(r)}{1+\cosh(r)}=1$, we conclude
	\begin{align*}
		Vol_{\partial_\infty \mathbb{CH}}(\partial_\infty \Sigma; p_0) &\leq 
		\liminf_{r\to \infty}   \frac{|\Sigma_r|_{\mathbb{CH}}}{(1+\cosh(r))^{m-1}}.
	\end{align*}

	To see the second claim, observe that Lemma \ref{WeakAsy2StrongAsyLem} implies that the new hypotheses on  $\Sigma$ encompasses the previous ones and ensures that $\Sigma'$ has the property that $\mathbf{T}^\top$ on $\Sigma'_s$ is of size $O((1-s)^2)$.  It follows that in this case,  \eqref{ComparisionMetricEqn} satisfies
	$$
g_s''=\frac{4}{(1-s^2)^2} g_s' + \frac{4s^4}{(1-s^2)^4} (i_{\Sigma'_s}^*\theta)^2\leq \frac{4}{(1-s^2)^2} g_s' +C g_s'
$$
for some constant $C>0$.
Hence, up to increasing $C$
	$$
(1+C e^{-2r}) \tilde{\Upsilon}_{p_0}^*g_{s}'\geq \frac{1-Ce^{-2r}}{(1+\cosh(r))^2} g_r.
$$
This means that up to increasing $C$, 
$$
	|\Sigma_s'|_{\Real}\geq \frac{(1-C e^{-2r})|\Sigma_r|_{\mathbb{CH}}}{(1+\cosh(r))^{m-1}}.
$$
The second claim then follows as before by taking $s\to 1$ and $r\to \infty$.
\end{proof}

Theorem \ref{main2} is a consequence of Theorem \ref{main1} and the following proposition:
\begin{prop}\label{LimitProp}
	Let $\Sigma\subset \mathbb{CH}^{n+1}$ be an $m$-dimensional submanifold that is weakly $C^1$-asymptotically regular and weakly horizontal. Then
	
	\begin{equation}
	\liminf_{t\to -\infty} \int_{\Sigma} \Phi^{t_0, p_0}_{m,1} (t,p) dVol_{\Sigma}(p)\geq\frac{Vol_{\partial_\infty \mathbb{CH}}(\partial_\infty \Gamma; p_0)}{|\mathbb{S}^{m-1}|_{\Real}}. \label{TimeDecayEqn}
	\end{equation}
	If $\Sigma$ is $C^1$-asymptotically regular and strongly asymptotically horizontal, then
	\begin{equation}
\lim_{t\to -\infty} \int_{\Sigma} \Phi^{t_0, p_0}_{m,1} (t,p) dVol_{\Sigma}(p)=\frac{Vol_{\partial_\infty \mathbb{CH}}(\partial_\infty \Gamma; p_0)}{|\mathbb{S}^{m-1}|_{\Real}}. \label{TimeDecayEqn2}
\end{equation}
\end{prop}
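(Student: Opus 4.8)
The plan is to reduce the integral to a one-dimensional weighted integral against the distance to $p_0$ via the coarea formula, and then to exploit the fact that, as $\tau = t_0-t \to +\infty$, the radial profile of the hyperbolic heat kernel pushes all of its mass off to the ideal boundary. Writing $r(p)=\dist_{g_{\mathbb{CH}}}(p,p_0)$ and $\Sigma_\rho=\Sigma\cap\partial B_\rho^{\mathbb{CH}}(p_0)$, and noting that $\Phi^{t_0,p_0}_{m,1}(t,p)=K_{m,1}(\tau,r(p))$ depends on $p$ only through $r(p)$, the coarea formula gives
\[
\int_\Sigma \Phi^{t_0,p_0}_{m,1}(t,p)\,dVol_\Sigma(p)=\int_0^\infty K_{m,1}(\tau,\rho)\,A(\rho)\,d\rho,\qquad A(\rho)=\int_{\Sigma_\rho}\frac{1}{|\nabla^\Sigma r|}\,d\mathcal{H}^{m-1}.
\]
I would record three soft properties of $K_{m,1}$, all inherited from its being (a multiple of) the heat kernel of $\mathbb{H}^m$: (i) the normalization $|\mathbb{S}^{m-1}|_{\Real}\int_0^\infty K_{m,1}(\tau,\rho)\sinh^{m-1}(\rho)\,d\rho=1$ for every $\tau$ (total heat-kernel mass in geodesic polar coordinates); (ii) $M(\tau):=\sup_{\rho\ge 0}K_{m,1}(\tau,\rho)\to 0$ as $\tau\to\infty$ (on-diagonal decay); and, as a consequence, (iii) the probability measures $d\mu_\tau(\rho)=|\mathbb{S}^{m-1}|_{\Real}K_{m,1}(\tau,\rho)\sinh^{m-1}(\rho)\,d\rho$ satisfy $\mu_\tau([0,R])\le M(\tau)\,|\mathbb{S}^{m-1}|_{\Real}\int_0^R\sinh^{m-1}\to 0$ for each fixed $R$, so their mass escapes to infinity.

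For the inequality \eqref{TimeDecayEqn}, I would discard the coarea factor using the elementary bound $A(\rho)\ge\mathcal{H}^{m-1}(\Sigma_\rho)=Vol_{\mathbb{CH}}(\Sigma_\rho)$ (as $|\nabla^\Sigma r|\le 1$), so that
\[
\int_\Sigma \Phi^{t_0,p_0}_{m,1}(t,\cdot)\,dVol_\Sigma\ \ge\ \int_0^\infty \psi(\rho)\,d\mu_\tau(\rho),\qquad \psi(\rho)=\frac{Vol_{\mathbb{CH}}(\Sigma_\rho)}{|\mathbb{S}^{m-1}|_{\Real}\sinh^{m-1}(\rho)}\ge 0.
\]
By Lemma \ref{BoundaryLem}, $\liminf_{\rho\to\infty}\psi(\rho)\ge Vol_{\partial_\infty\mathbb{CH}}(\partial_\infty\Sigma;p_0)/|\mathbb{S}^{m-1}|_{\Real}$. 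A Fatou-type argument against the escaping family $\mu_\tau$ then finishes it: given $\epsilon>0$, choose $R$ with $\psi\ge\liminf_{\rho\to\infty}\psi-\epsilon$ on $(R,\infty)$ and estimate $\int\psi\,d\mu_\tau\ge(\liminf_{\rho\to\infty}\psi-\epsilon)\,\mu_\tau((R,\infty))$, whose liminf as $\tau\to\infty$ is $\liminf_{\rho\to\infty}\psi-\epsilon$ by (iii); letting $\epsilon\to 0$ gives \eqref{TimeDecayEqn}.

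For the equality \eqref{TimeDecayEqn2}, the lower bound is already supplied by \eqref{TimeDecayEqn} since the strengthened hypotheses imply the weak ones. For the matching upper bound I would control the coarea factor: under $C^1$-asymptotic regularity, Lemma \ref{WeakAsy2StrongAsyLem} makes $\Sigma$ asymptotically quasi-normal, so its modified Bergman compactification meets $\partial\mathbb{B}^{2n+2}$ orthogonally and the unit radial field is asymptotically tangent to $\Sigma$; hence $|\nabla^\Sigma r|\to 1$ uniformly on $\Sigma_\rho$ and $A(\rho)=(1+o(1))Vol_{\mathbb{CH}}(\Sigma_\rho)$ as $\rho\to\infty$. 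Combined with the equality clause of Lemma \ref{BoundaryLem}, this yields $A(\rho)/\sinh^{m-1}(\rho)\to Vol_{\partial_\infty\mathbb{CH}}(\partial_\infty\Sigma;p_0)$. Splitting the $\rho$-integral at a large $R$, the compact part is bounded by $M(\tau)\,Vol_{\mathbb{CH}}(\Sigma\cap B_R^{\mathbb{CH}}(p_0))\to 0$ by (ii), while the outer part converges to $Vol_{\partial_\infty\mathbb{CH}}(\partial_\infty\Sigma;p_0)/|\mathbb{S}^{m-1}|_{\Real}$ by (i), (iii), and the limit just established; this gives the upper bound, hence \eqref{TimeDecayEqn2}.

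The main obstacle is the interface between the geometric input and the kernel: one must justify inserting the single $\liminf$/limit of the normalized boundary volume supplied by Lemma \ref{BoundaryLem} into the integral against $\mu_\tau$, which forces careful control of both the compact region $\rho\le R$ (handled by on-diagonal decay of $K_{m,1}$ and finiteness of $Vol_{\mathbb{CH}}(\Sigma\cap B_R^{\mathbb{CH}}(p_0))$) and the coarea factor $|\nabla^\Sigma r|$ near the ideal boundary (where, for the sharp equality, one needs the orthogonal-meeting consequence of asymptotic quasi-normality). Verifying properties (i)--(iii) of $K_{m,1}$ directly from its definition in \cite{BernsteinBhattacharya, BernsteinHypEntropy} is routine but must be carried out, since the entire concentration mechanism rests on them.
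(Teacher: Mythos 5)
Your proposal is correct and follows essentially the same route as the paper: coarea decomposition in the radial variable, the bound $|\nabla_\Sigma \rho|\leq 1$ together with Lemma \ref{BoundaryLem} for the lower bound, and asymptotic quasi-normality (via Lemma \ref{WeakAsy2StrongAsyLem}) forcing $|\nabla_\Sigma\rho|\to 1$ plus the equality clause of Lemma \ref{BoundaryLem} for the matching upper bound. The only cosmetic difference is that you derive the concentration of the kernel measure at infinity from first principles (your properties (i)--(iii)), whereas the paper simply cites the limit $\lim_{t\to-\infty}\int_R^\infty K_{m,1}(t_0-t,r)\sinh^{m-1}(r)\,dr=|\mathbb{S}^{m-1}|_{\Real}^{-1}$ from \cite[Proposition 4.2]{BernsteinHypEntropy}.
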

\begin{proof}
	Let $\Sigma_r=\Sigma\cap \partial B_{r}^{\mathbb{CH}}(p_0)$.  Observe that, by the definition of being weakly $C^1$-asymptotically regular there is an $R_0>0$ such that, for $r\geq R_0$, $\Sigma $ meets $\partial B_r^{\mathbb{CH}}(p_0)$ transversally and so $\Sigma_r$ is a smooth $(m-1)$-dimensional submanifold of  $\partial B_r^{\mathbb{CH}}(p_0)$.  
	
By Lemma \ref{BoundaryLem}, for any $\epsilon>0$, there is an $R_\epsilon>R_0$ so, for $r>R_\epsilon$,
	$$
	(1-\epsilon) { Vol_{\partial_\infty \mathbb{CH}}(\partial_\infty \Sigma; p_0)}\leq  \frac{ |\Sigma_r|_{\mathbb{CH}}}{{\sinh^{m-1}(r)}}.
	$$
	
	As $|\nabla_{\mathbb{CH}} \rho|\leq 1$, using the co-area formula, for $R>R_\epsilon$, one has
	\begin{align*}
		(1-\epsilon) Vol_{\partial_\infty\mathbb{CH}}(\partial_\infty \Sigma; p_0) &
		\int_R^\infty K_{m,1}(t_0-t, r)  \sinh^{m-1}(r) dr\\
		& \leq \int_{\Sigma\setminus B_R^{\mathbb{CH}}(p_0)} \Phi_{m,1}^{t_0,p_0}(t,p) dVol_\Sigma(p).
	\end{align*}
As $K_{m,1}(t,r)=K_{m}(t,r)$, it follows from the proof of  \cite[Proposition 4.2]{BernsteinHypEntropy} that
$$
\lim_{t\to -\infty} 	\int_R^\infty K_{m,1}(t_0-t, r)  \sinh^{m-1}(r) dr= |\mathbb{S}^{m-1}|_{\Real}^{-1}.
$$
 
Hence,
	\begin{align*}
		(1-\epsilon) \frac{ Vol_{\partial_\infty \mathbb{CH}}(\partial_\infty \Sigma; p_0) }{|\mathbb{S}^{m-1}|_{\Real}}&\leq \liminf_{t\to -\infty} \int_{\Sigma\setminus B_R^{\mathbb{CH}}(p_0)} \Phi_{m,1}^{t_0,p_0}(t,p) dVol_\Sigma(p)\\
		&=\liminf_{t\to -\infty} \int_{\Sigma} \Phi_{m,1}^{t_0,p_0}(t,p) dVol_\Sigma(p)
	\end{align*}
	where the second equality again uses \eqref{TimeDecayEqn}.
	Sending $\epsilon \to 0$ yields,
	$$
	\frac{ Vol_{\partial_\infty \mathbb{CH}}(\partial_\infty \Sigma; p_0) }{|\mathbb{S}^{m-1}|_{\Real}}\leq \liminf_{t\to -\infty} \int_{\Sigma} \Phi_{m,1}^{t_0,p_0}(t,p) dVol_\Sigma(p),
	$$
	which verifies the first claim. 
	
	Suppose $\Sigma$ is $C^1$-asymptotically regular and strongly asymptotically horizontal.  By Lemma \ref{WeakAsy2StrongAsyLem}, in the modified Bergman compactification, the compactified surface meets the ideal boundary orthogonally.  It follows that if $\rho=\dist_{\mathbb{CH}}(p,p_0)$, then 
$$
\lim_{p\to \infty} |\nabla_\Sigma \rho|=1.
$$
In particular, there is an $R_\epsilon'>0$ sufficiently large such that for $p\in \Sigma\setminus B_{R_\epsilon'}^{\mathbb{CH}}(p_0)$, 
$$
1\leq \frac{1}{|\nabla_\Sigma \rho|}\leq 1+\epsilon.
$$
Appealing to Lemma \ref{BoundaryLem}, up to increasing $R_\epsilon'>0$ one has for $r>R_{\epsilon}'$, 
	$$
 \frac{ |\Sigma_r|_{\mathbb{CH}}}{{\sinh^{m-1}(r)}}\leq (1+\epsilon) { Vol_{\partial_\infty \mathbb{CH}}(\partial_\infty \Sigma; p_0)}. 
$$

Hence, for $R>R_{\epsilon}'$, one has
\begin{align*}
	\int_{\Sigma\setminus B_R^{\mathbb{CH}}(p_0)} & \Phi_{m,1}^{t_0,p_0}(t,p) dVol_\Sigma(p)\\
	&\leq   (1+\epsilon)^2 Vol_{\partial_\infty\mathbb{CH}}(\partial_\infty \Sigma; p_0) \int_R^\infty K_{m,1}(t_0-t, r)  \sinh^{m-1}(r) dr.
\end{align*}
Arguing as above, 
$$
\limsup_{t\to -\infty} \int_{\Sigma} \Phi_{m,1}^{t_0,p_0}(t,p) dVol_\Sigma(p)\leq
\frac{ Vol_{\partial_\infty \mathbb{CH}}(\partial_\infty \Sigma; p_0) }{|\mathbb{S}^{m-1}|_{\Real}}.
$$
This proves that
$$
\lim_{t\to -\infty} \int_{\Sigma} \Phi_{m,1}^{t_0,p_0}(t,p) dVol_\Sigma(p)=
\frac{ Vol_{\partial_\infty \mathbb{CH}}(\partial_\infty \Sigma; p_0) }{|\mathbb{S}^{m-1}|_{\Real}}
$$
verifying the second claim.  
\end{proof}

We may now prove Theorem \ref{main2}.
\begin{proof}[Proof of Theorem \ref{main2}]
	By definition, for any fixed $p_0\in \mathbb{CH}^{n+1}$,
	$$
	\lambda_{\mathbb{CH}}[\Sigma]\geq \limsup_{t\to -\infty} \int_{\Sigma} \Phi^{0, p_0}_{m,1} (t,p) dVol_{\Sigma}(p).
	$$
	Hence, Proposition \ref{LimitProp} implies that
	$$
	\lambda_{\mathbb{CH}}[\Sigma]\geq  \frac{Vol_{\partial_\infty \mathbb{CH}}(\partial_\infty \Sigma; p_0)}{|\mathbb{S}^{m-1}|_{\Real}}.
	$$
	Taking the supremum over $p_0\in\mathbb{CH}^{n+1}$ and using $\Gamma=\partial_\infty\Sigma$ yields, 
	$$
	\lambda_{\mathbb{CH}}[\Sigma]\geq\frac{\lambda_{CR}[\Gamma]}{|\mathbb{S}^{m-1}|_{\Real}}.
	$$
	This proves the first claim. 
	
	To see the second claim, observe that if $\Sigma$ is weakly $C^2$-asymptotically regular, weakly asymptotically horizontal and minimal, then we may apply Theorem \ref{main1} to see that $\Sigma$ is $C^1$-asymptotically regular and strongly asymptotically horizontal. In particular, Proposition \ref{LimitProp} implies
	$$
	\lim_{t\to -\infty} \int_{\Sigma} \Phi^{0, p_0}_{m,1} (t,p) dVol_{\Sigma}(p)=\frac{Vol_{\partial_\infty \mathbb{CH}}(\partial_\infty \Sigma; p_0)}{|\mathbb{S}^{m-1}|_{\Real}}.
	$$
	 Furthermore,  $\Sigma$ may be thought of as a static solution of mean curvature flow that, by Lemma \ref{BoundaryLem}, and the co-area formula has exponential volume growth.  Hence, by \cite[Theorem 1.1]{BernsteinBhattacharya}, for all $\tau>0$, one has
	\begin{align*}
		\int_{\Sigma} \Phi^{0, p_0}_{m,1} (-\tau,p) dVol_{\Sigma}(p) &\leq \lim_{t\to -\infty} \int_{\Sigma} \Phi^{0, p_0}_{m,1} (t-\tau,p) dVol_{\Sigma}(p)\\
		&= \frac{Vol_{\partial_\infty \mathbb{CH}}(\partial_\infty \Gamma; p_0)}{|\mathbb{S}^{m-1}|_{\Real}}\leq \frac{\lambda_{CR}[\Gamma]}{|\mathbb{S}^{m-1}|_{\Real}}.
	\end{align*}
	Taking the supremum over $\tau>0$ and $p_0\in \mathbb{CH}^{n+1}$ yields
	$$
	\lambda_{\mathbb{CH}}[\Sigma]\leq \frac{\lambda_{CR}[\Gamma]}{|\mathbb{S}^{m-1}|_{\Real}}.
	$$
	Combined with the first claim this completes the proof.
\end{proof}

\appendix
\section{Huisken Monotonicity in $\mathbb{CH}^{n+1}$}
In $\mathbb{CH}^{n+1}$, the monotonicity formula from \cite{BernsteinBhattacharya} has a particularly simple form.  We first record without proof the computation of the Hessian of a radial function.
\begin{lem}\label{HessDistLem}
	Let $\rho(p)=\dist_{\mathbb{CH}}(p,p_0)$ and suppose that $F:[0, \infty)\to \Real$ is a $C^2$ function.   If $f(p)=F(\rho(p))$ on $\mathbb{CH}^{n+1}\setminus \set{p_0}$, then
\begin{align*}
	\nabla_{\mathbb{CH}}^2 f &= \coth (\rho) F'(\rho) g_{\mathbb{CH}} +\left( F''(\rho)-  \coth (\rho) F'(\rho)\right) d\rho^2 + \\
	&+ \tanh(\rho) F'(\rho)  (d\rho \circ J_{\mathbb{CH}})^2.
\end{align*}
\end{lem}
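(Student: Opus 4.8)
The plan is to reduce the statement to a computation of the Hessian of $\rho$ alone and then apply the chain rule. Writing $\nabla^{\mathbb{CH}}$ for the Levi-Civita connection of $g_{\mathbb{CH}}$, since $f=F\circ\rho$ one has $\nabla^{\mathbb{CH}}f=F'(\rho)\nabla^{\mathbb{CH}}\rho$ and hence
$$
\nabla_{\mathbb{CH}}^2 f=F''(\rho)\,d\rho^2+F'(\rho)\,\nabla_{\mathbb{CH}}^2\rho .
$$
Substituting this into the claimed identity shows that everything follows once one proves the purely metric formula
$$
\nabla_{\mathbb{CH}}^2\rho=\coth(\rho)\left(g_{\mathbb{CH}}-d\rho^2\right)+\tanh(\rho)\,(d\rho\circ J_{\mathbb{CH}})^2 ;
$$
indeed, multiplying by $F'(\rho)$, adding $F''(\rho)\,d\rho^2$, and regrouping the $d\rho^2$-terms reproduces the three terms of the Lemma exactly. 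So the real content is the Hessian of the distance function.

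To compute $\nabla_{\mathbb{CH}}^2\rho$ I would use that $\mathbb{CH}^{n+1}$ is a rank-one symmetric space, so along the unit-speed radial geodesic $\gamma$ from $p_0$ with $\gamma'=\nabla^{\mathbb{CH}}\rho$, the restriction of $\nabla_{\mathbb{CH}}^2\rho$ to the tangent space of the geodesic sphere $\partial B_\rho(p_0)$ equals $A'(\rho)A(\rho)^{-1}$, where $A$ is the matrix solution of the Jacobi equation along $\gamma$ with $A(0)=0$ and $A'(0)=\mathrm{Id}$. First I would split the tangent space at $\gamma(\rho)$ orthogonally as $\R\,\nabla^{\mathbb{CH}}\rho\oplus\R\,J_{\mathbb{CH}}\nabla^{\mathbb{CH}}\rho\oplus\mathcal{W}$, where $\mathcal{W}$ is the $J_{\mathbb{CH}}$-invariant complement. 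In a complex space form the Jacobi operator $X\mapsto R^{\mathbb{CH}}(X,\gamma')\gamma'$ is diagonalized by this splitting: the direction $J_{\mathbb{CH}}\nabla^{\mathbb{CH}}\rho$ spans a holomorphic plane with $\gamma'$ and so contributes the holomorphic sectional curvature $-4$, while each direction in $\mathcal{W}$ spans a totally real plane with $\gamma'$ and contributes $-1$; this is precisely the normalization that sectional curvatures lie in $[-4,-1]$.

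Next I would solve the scalar Jacobi equations in each eigenspace: the data $A(0)=0$, $A'(0)=1$ give $\frac{1}{2}\sinh(2\rho)$ on $\R\,J_{\mathbb{CH}}\nabla^{\mathbb{CH}}\rho$ and $\sinh(\rho)$ on $\mathcal{W}$. Hence $A'A^{-1}$ equals $2\coth(2\rho)$ on the $J_{\mathbb{CH}}$-direction and $\coth(\rho)$ on $\mathcal{W}$, while $\nabla_{\mathbb{CH}}^2\rho(\nabla^{\mathbb{CH}}\rho,\cdot)=0$ because $|\nabla^{\mathbb{CH}}\rho|\equiv 1$. To package this as a tensor I would use the identity $2\coth(2\rho)=\coth(\rho)+\tanh(\rho)$ together with the observation that $(d\rho\circ J_{\mathbb{CH}})(X)=-g_{\mathbb{CH}}(J_{\mathbb{CH}}\nabla^{\mathbb{CH}}\rho,X)$ vanishes on $\R\,\nabla^{\mathbb{CH}}\rho\oplus\mathcal{W}$ and equals $-1$ on the unit vector $J_{\mathbb{CH}}\nabla^{\mathbb{CH}}\rho$. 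Thus $\coth(\rho)(g_{\mathbb{CH}}-d\rho^2)$ contributes $\coth(\rho)$ on every sphere direction and $\tanh(\rho)(d\rho\circ J_{\mathbb{CH}})^2$ adds the extra $\tanh(\rho)$ exactly along $J_{\mathbb{CH}}\nabla^{\mathbb{CH}}\rho$, recovering the displayed expression for $\nabla_{\mathbb{CH}}^2\rho$.

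The hard part will be the curvature bookkeeping: correctly extracting the two eigenvalues of the Jacobi operator from the complex-space-form curvature tensor and, in particular, tracking the factor of two in the holomorphic direction, which is what produces $2\coth(2\rho)=\coth(\rho)+\tanh(\rho)$ and hence the $\tanh(\rho)$-term. An alternative, more pedestrian route that avoids Jacobi fields is to use homogeneity to place $p_0$ at the center $\mathbf{0}$ of the Bergman model, where radial segments are geodesics and $\rho$ is an explicit function of $r$; one then computes the Hessian directly from the Christoffel symbols of $g_B$, with the $\theta$-direction realizing $J_{\mathbb{CH}}\nabla^{\mathbb{CH}}\rho$ and the $\eta$-directions realizing $\mathcal{W}$. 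This trades the curvature identities for a longer but entirely explicit connection computation.
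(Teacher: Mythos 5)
The paper records this lemma explicitly \emph{without proof}, so there is no argument of the authors to compare yours against; what matters is only whether your computation is correct, and it is. The reduction via $\nabla^2_{\mathbb{CH}} f = F''(\rho)\,d\rho^2 + F'(\rho)\,\nabla^2_{\mathbb{CH}}\rho$ is standard, the eigenvalue bookkeeping for the Jacobi operator in a complex space form with holomorphic sectional curvature $-4$ (eigenvalue $-4$ on $J_{\mathbb{CH}}\nabla^{\mathbb{CH}}\rho$, eigenvalue $-1$ on the totally real complement $\mathcal{W}$) is right for the paper's normalization, the resulting radial derivatives $2\coth(2\rho)=\coth(\rho)+\tanh(\rho)$ and $\coth(\rho)$ are correct, and the identification of $(d\rho\circ J_{\mathbb{CH}})^2$ as the rank-one tensor supported on the $J_{\mathbb{CH}}\nabla^{\mathbb{CH}}\rho$-direction (together with $\nabla^2_{\mathbb{CH}}\rho(\nabla^{\mathbb{CH}}\rho,\cdot)=0$ from $|\nabla^{\mathbb{CH}}\rho|\equiv 1$) assembles into exactly the displayed formula. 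Your alternative suggestion of computing directly in the Bergman model at $\mathbf{0}$ would also work and is consistent with how the paper organizes its other computations (e.g.\ Lemma \ref{BallLem} identifies radial segments as geodesics), but the Jacobi-field route you lead with is cleaner and complete as stated.
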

We may now specialize some of the conclusions of \cite{BernsteinBhattacharya} to $\mathbb{CH}^{n+1}$.
\begin{prop}\label{MonotonicityFormulaProp}
	Suppose that $\set{\Sigma_t}_{t\in [0,T)}$ is a mean curvature flow in $\mathbb{CH}^{n+1}$ of $m$-dimensional submanifolds with exponential volume growth. For any $t_0>0$, $p_0\in \mathbb{CH}^{n+1}$, and $t\in (0,\min\set{t_0, T})$ one has
	\begin{align*}
		\frac{d}{dt} \int_{\Sigma_t}  \Phi_{m,1}^{t_0,p_0} dV_{\Sigma_t} =- \int_{\Sigma_t}\left( \left| \frac{\nabla^\perp_{\Sigma_t} \Phi_{m,1}^{t_0,p_0}}{\Phi_{m,1}^{t_0,p_0}} -\mathbf{H}_{\Sigma_t}\right|^2 +Q_{m,1}^{t_0,p_0}(t,x, N_x \Sigma_t) \right) \Phi_{m,1}^{t_0,p_0} dV_{\Sigma_t}.
	\end{align*}
	Here,
\begin{align*}
Q_{m,1}^{t_0,p_0}(t,x, N_x \Sigma_t)&=\left(\log K_{m,1}''(t,\rho)-\coth \rho \log K_{m,1}'(t,\rho)\right) |\nabla^\perp_\Sigma \rho|^2 \\
&-\tanh \rho |(J_{\mathbb{CH}}(\nabla_{\mathbb{CH}}\rho))^\top|^2    \log K_{m,1}'(t,\rho)\geq 0.
\end{align*}
Moreover, this inequality is strict somewhere unless $\Sigma_t$ is an isotropic cone over $p_0$.
\end{prop}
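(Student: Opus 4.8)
The plan is to obtain the identity by specializing the general Cartan--Hadamard monotonicity formula of \cite{BernsteinBhattacharya} and then evaluating its error term explicitly with Lemma \ref{HessDistLem}. Write $\Phi=\Phi_{m,1}^{t_0,p_0}$ and $\rho=\dist_{\mathbb{CH}}(\cdot,p_0)$. Following the derivation in \cite{BernsteinBhattacharya}, the first variation of weighted area along the flow gives $\frac{d}{dt}\int_{\Sigma_t}\Phi\,dV=\int_{\Sigma_t}\big(\partial_t\Phi+g_{\mathbb{CH}}(\nabla^\perp\Phi,\mathbf{H}_{\Sigma_t})-\Phi|\mathbf{H}_{\Sigma_t}|^2\big)\,dV$; completing the square in $\mathbf{H}_{\Sigma_t}$, substituting $g_{\mathbb{CH}}(\nabla^\perp\Phi,\mathbf{H}_{\Sigma_t})=\Delta_{\Sigma_t}\Phi-\mathrm{tr}_{\Sigma_t}\nabla^2_{\mathbb{CH}}\Phi$, and discarding $\int_{\Sigma_t}\Delta_{\Sigma_t}\Phi\,dV=0$ (justified, as in \cite{BernsteinBhattacharya}, by the exponential volume growth hypothesis together with the Gaussian-type spatial decay of $\Phi$) reduces everything to identifying
\[
Q_{m,1}^{t_0,p_0}=-\frac{\partial_t\Phi}{\Phi}-\frac{\mathrm{tr}_{\Sigma_t}\nabla^2_{\mathbb{CH}}\Phi}{\Phi}-\left|\frac{\nabla^\perp\Phi}{\Phi}\right|^2
\]
with the stated expression and proving $Q_{m,1}^{t_0,p_0}\ge 0$.

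First I would compute $\mathrm{tr}_{\Sigma_t}\nabla^2_{\mathbb{CH}}\Phi$. Since $\Phi(t,\cdot)=K_{m,1}(t_0-t,\rho)$ is radial, Lemma \ref{HessDistLem} applies with $F(\rho)=K_{m,1}(t_0-t,\rho)$. Tracing its three tensors over an orthonormal frame of $T\Sigma_t$, using $\mathrm{tr}_{\Sigma_t}g_{\mathbb{CH}}=m$, $\mathrm{tr}_{\Sigma_t}d\rho^2=|(\nabla_{\mathbb{CH}}\rho)^\top|^2$ and $\mathrm{tr}_{\Sigma_t}(d\rho\circ J_{\mathbb{CH}})^2=|(J_{\mathbb{CH}}\nabla_{\mathbb{CH}}\rho)^\top|^2$, gives, with $K=K_{m,1}$,
\[
\mathrm{tr}_{\Sigma_t}\nabla^2_{\mathbb{CH}}\Phi=m\coth\rho\,\partial_\rho K+(\partial_\rho^2 K-\coth\rho\,\partial_\rho K)\,|(\nabla_{\mathbb{CH}}\rho)^\top|^2+\tanh\rho\,\partial_\rho K\,|(J_{\mathbb{CH}}\nabla_{\mathbb{CH}}\rho)^\top|^2,
\]
while $\partial_t\Phi=-\partial_\tau K$ and $|\nabla^\perp\Phi|^2=(\partial_\rho K)^2\,|\nabla^\perp\rho|^2$.

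Next I would clear the $m\coth\rho$ term using the defining equation of $K_{m,1}$: as it is the radial heat-kernel profile of $\mathbb{H}^m$ (following \cite{Coldinga,BernsteinHypEntropy}), it satisfies $\partial_\tau K=\partial_\rho^2 K+(m-1)\coth\rho\,\partial_\rho K$. Substituting this, using $|\nabla^\perp\rho|^2=1-|(\nabla_{\mathbb{CH}}\rho)^\top|^2$ together with $\tfrac{\partial_\rho^2 K}{K}-\tfrac{(\partial_\rho K)^2}{K^2}=(\log K)''$ and $\tfrac{\partial_\rho K}{K}=(\log K)'$, all the $\coth\rho$- and $|(\nabla_{\mathbb{CH}}\rho)^\top|^2$-terms recombine and $Q_{m,1}^{t_0,p_0}$ collapses to exactly $\big((\log K)''-\coth\rho(\log K)'\big)|\nabla^\perp\rho|^2-\tanh\rho(\log K)'\,|(J_{\mathbb{CH}}\nabla_{\mathbb{CH}}\rho)^\top|^2$. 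Non-negativity is then termwise: $\tanh\rho\ge 0$ and the two geometric factors are non-negative, so it suffices to invoke the two profile estimates from \cite{BernsteinHypEntropy,BernsteinBhattacharya}, namely $(\log K)'<0$ for $\rho>0$ (monotone radial decay) and $(\log K)''-\coth\rho(\log K)'\ge 0$ (the log-convexity bound already driving the hyperbolic monotonicity).

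For the rigidity claim, both coefficients are strictly positive for $\rho>0$, so $Q_{m,1}^{t_0,p_0}\equiv 0$ on $\Sigma_t$ exactly when $|\nabla^\perp\rho|^2\equiv 0$ and $|(J_{\mathbb{CH}}\nabla_{\mathbb{CH}}\rho)^\top|^2\equiv 0$. To interpret these I would pass to the modified Bergman model centered at $p_0$, where by Lemma \ref{BallLem} geodesics through $p_0$ are Euclidean segments through $\mathbf{0}$ and $\nabla_{\mathbb{CH}}\rho$ is a positive multiple of the position field $\mathbf{X}$. The first condition then says $\Sigma_t$ is a Euclidean cone over $\mathbf{0}$; and since the formula for $J_{\tilde{B}}$ in Section \ref{sec_bergman} gives $J_{\tilde{B}}\mathbf{X}$ as a nonzero multiple of $\mathbf{T}$, the second condition is exactly $\mathbf{T}^\top=\mathbf{0}$ along $\Sigma_t$, i.e.\ horizontality of its links. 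As in Corollary \ref{MinimalConeLem}, a cone with $\mathbf{T}^\top=\mathbf{0}$ is isotropic with respect to $\omega_{\tilde{B}}$, hence isotropic in $\mathbb{CH}^{n+1}$; conversely an isotropic cone over $p_0$ has $\nabla_{\mathbb{CH}}\rho$ tangent and $J_{\mathbb{CH}}\nabla_{\mathbb{CH}}\rho$ normal, giving $Q_{m,1}^{t_0,p_0}\equiv 0$. I expect the bulk of the labor to be the bookkeeping in the two computational steps, with the genuinely substantive inputs---the two profile estimates for $K_{m,1}$ and the vanishing of the surface-divergence term under exponential volume growth---imported from \cite{BernsteinHypEntropy,BernsteinBhattacharya}; the point requiring the most care is the rigidity translation of ``$J_{\mathbb{CH}}\nabla_{\mathbb{CH}}\rho$ normal along a geodesic cone'' into ``isotropic cone,'' the complex-hyperbolic analog of Corollary \ref{MinimalConeLem}.
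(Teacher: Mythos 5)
Your proposal is correct and follows essentially the same route as the paper: the paper likewise specializes the general monotonicity of \cite{BernsteinBhattacharya} (citing its Proposition 5.1 for the reduction you re-derive by hand from the first variation), evaluates the error term with Lemma \ref{HessDistLem} (tracing the Hessian over the normal rather than the tangent frame, which is equivalent), defers the two profile estimates for $K_{m,1}$ and the strictness-unless-geodesic-cone statement to \cite{BernsteinHypEntropy, BernsteinBhattacharya}, and closes the rigidity case exactly as you do, by identifying a geodesic cone along which $J_{\mathbb{CH}}(\nabla_{\mathbb{CH}}\rho)$ is normal with an isotropic cone via the Bergman model. The one point to watch is that your cancellation of the radial terms uses the exact equation $\partial_\tau K_{m,1}=\partial_\rho^2K_{m,1}+(m-1)\coth\rho\,\partial_\rho K_{m,1}$, a property of $K_{m,1}$ that the paper never states explicitly but absorbs into the citation of \cite{BernsteinBhattacharya}; this is consistent with the fact that the stated $Q_{m,1}^{t_0,p_0}$ contains no purely radial remainder, but strictly speaking it must be checked against the definition of $K_{m,1}$ given there.
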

\begin{proof}
   By \cite[Proposition 5.1]{BernsteinBhattacharya} to obtain the above formulas it is enough to compute
   \begin{align*}
   Q_{m,1}^{t_0, p_0}(t,x,N_x \Sigma_t) &= \sum_{i=1}^{k} \nabla^2_{\mathbb{CH}} \Phi_{m,1}^{t_0, p_0} (E_i, E_i) \\
   &+\left((m-1) \coth(\rho) -\Delta_{\mathbb{CH}} \rho\right) \partial_\rho \log \Phi_{m,1}^{t_0, p_0}
  \end{align*}
  where $k=2n+2-m$ and $E_1, \ldots, E_{k}$ is an orthonormal basis of $N_x\Sigma_t$.
  By Lemma \ref{HessDistLem},
 $$
 \Delta_{\mathbb{CH}} \rho= (2n+1) \coth(\rho) +\tanh(\rho) 
 $$
 while  
\begin{align*}
 \sum_{i=1}^{k} \nabla^2_{\mathbb{CH}} \Phi_{m,1}^{t_0, p_0} (E_i, E_i) &= k \coth(\rho) \log K_{m,1}' +(\log K_{m,1}''- \coth \rho \log K_{m,1}') |\nabla^\perp_\Sigma \rho|^2\\
 &+\tanh(\rho)  \log K_{m,1}' |(J_{\mathbb{CH}}(\nabla_{\mathbb{CH}} \rho))^\perp|^2.
\end{align*}
The claimed formula follows from this.  The inequality is an immediate consequence of properties of  $K_{m,1}$ -- see \cite[Section  5]{BernsteinBhattacharya}.  The strictness of the inequality unless $\Sigma_t$ is a geodesic cone over $p_0$ follows from \cite[Lemma 5.3]{BernsteinBhattacharya}.  Finally, if $\Sigma$ is a geodesic cone, then $\nabla_{\mathbb{CH}} \rho$ is tangent to $\Sigma_t$.  For the inequality to not be strict one must have $J_{\mathbb{CH}}( \nabla_{\mathbb{CH}} \rho)$ orthogonal to $\Sigma_t$.  If $\Upsilon: \mathbb{CH}^{n+1}\to \mathbb{B}^{2n+2}$ is a Bergman compactification with $\Upsilon(p_0)=\mathbf{0}$, then this condition is equivalent to $\Upsilon(\Sigma_t)$, which is a cone over $\mathbf{0}$,  being orthogonal to $\mathbf{T}$ that can only occur if the cone is isotropic as this is equivalent to the link of the cone being horizontal. 
   \end{proof}
\section{Geometric Computations}
%
%

\subsection{Rank one deformation} \label{RankOneSec}
Let $(M,g)$ be a Riemannian manifold, $\tau$ a smooth one-form on $M$ and $\alpha$ a smooth  function.  Suppose that $|\tau|_g^2 \alpha>-1$.  We may then define a new Riemannian metric
$$
h=g+\alpha \tau\otimes \tau.
$$
Let $\nabla^g$ denote the Levi-Civita connection of $g$ and $\nabla^h$ be the Levi-Civita connection of $h$.
Let $C(X,Y)$ be the $(1,2)$ tensor field given by
$$
C(X,Y)=\nabla^h_{X} Y-\nabla^g_X Y.
$$
Using the Koszul formula we compute that
\begin{align*}
 h(C(X,Y),Z)=	g(C(X,Y),Z)+\alpha \tau(C(X,Y))\tau(Z)= c(X,Y,Z)
\end{align*}
where $c(X,Y,Z)$ satisfies
\begin{align*}
	c(X,Y,Z) &= \frac{1}{2} (\nabla^g_{X}\alpha \tau)(Y)\tau(Z)+\frac{1}{2} (\nabla^g_{Y}\alpha \tau)(X) \tau(Z)+\frac{1}{2} \alpha \nabla_X \tau(Z) \tau(Y)\\
	&+\frac{1}{2} \alpha \nabla_Y \tau(Z) \tau(X)-\frac{1}{2}  (\nabla_Z^g \alpha \tau)(X)\tau(Y)-\frac{1}{2} \alpha\nabla_Z^g  \tau(Y)\tau(X).
\end{align*}
Let $\mathbf{T}$ be the vector field such that
$$
g(\mathbf{T}, Z)=\tau(Z).
$$
Choose orthonormal vectors $E_1, \ldots, E_{n}$ that are orthogonal to $\mathbf{T}$ and so $\set{E_1, \ldots, E_n, \mathbf{T}}$ spans $T_p M$.
It follows that
$$
C(X,Y)=\frac{|\mathbf{T}|^{-2}_g}{1+\alpha|\mathbf{T}|_g^2} c(X,Y, \mathbf{T}) \mathbf{T}+\sum_{i=1}^n c(X,Y, E_i) E_i
$$
where $c$ and $d_i$ are symmetric $(0,2)$ tensor fields that satisfy
\begin{align*}
	c(X,Y, \mathbf{T})=\frac{1}{2} \big(g(\nabla^g_X \alpha \mathbf{T}, Y)|\mathbf{T}|^{2}_g+g(\nabla^g_Y \alpha\mathbf{T}, X)|\mathbf{T}|^{2}_g+\alpha  g(\nabla_X^g \mathbf{T}, \mathbf{T})g(\mathbf{T}, Y)\\ 
	+\alpha  g(\nabla_Y^g \mathbf{T}, \mathbf{T})g(\mathbf{T}, X)-g(\nabla_{\mathbf{T}}^g \alpha \mathbf{T}, X)g(\mathbf{T}, Y)-\alpha g(\nabla_{\mathbf{T}}^g  \mathbf{T}, Y)g(\mathbf{T},X) \big)
\end{align*}
and
\begin{align*}
	c(X,Y, E_i)= \frac{1}{2} \alpha g(\nabla_X^g \mathbf{T}, E_i) g(\mathbf{T},Y)+\frac{1}{2} \alpha g(\nabla_Y^g \mathbf{T}, E_i) g(\mathbf{T},X)\\-\frac{1}{2} g(\nabla_{E_i}^g \alpha \mathbf{T}, X) g(\mathbf{T},Y)-\frac{1}{2} g(\nabla_{E_i}^g \alpha \mathbf{T}, Y) g(\mathbf{T},X).
\end{align*}

\subsection{Second fundamental form}\label{CurvatureAppendix}
We compute the relationship between the second fundamental form of a submanifold in a Riemannian manifold $(M,g)$ with the same quantity with respect to a metric $h$ obtained as a rank one deformation of $g$.

Let $\Sigma\subset M$ be a submanifold.  Observe that for $X$ and $Y$ tangent to $\Sigma$ one has
\begin{align*}
h(X,Y)&=g(X,Y)+\tau(X)\tau(Y)=g(X,Y)+g(\mathbf{T}, X)g(\mathbf{T}, Y),\\
&h_{\Sigma}(X,Y)=g_\Sigma(X,Y)+g_\Sigma(\mathbf{T}^\top, X)g_\Sigma(\mathbf{T}^\top, Y)
\end{align*}
where $\mathbf{T}^\top$ is the $g$-tangential component of $\mathbf{T}$.  Denote by $\nabla^{\Sigma,g}$ and $\nabla^{\Sigma,h}$ the induced connections on $\Sigma$ from $\nabla^g$ and $\nabla^h$ respectively.  
Let $\mathbf{T}^N$ be the $g$-normal component of $\mathbf{T}$ to $\Sigma$ and let $\mathbf{T}^{\hat{N}}$ be the $h$-normal component.

We have
$$
\mathbf{T}^{\hat{N}}= \mathbf{T}^N-\frac{|\mathbf{T}^N|_{g}^2}{1+|\mathbf{T}^\top|^2_g} \mathbf{T}^\top=\mathbf{T}-\frac{1+|\mathbf{T}|_g^2}{1+|\mathbf{T}^\top|^2_g} \mathbf{T}^\top.
$$
Now choose $N_1,\ldots, N_l$ to be $g$-unit length tangent vectors that are $g$-orthogonal to $\mathbf{T}$ and $\Sigma$.  These are also of $h$-unit length and are $h$-orthogonal to $\mathbf{T}$ and $\Sigma$.  Likewise, let $E_1, \ldots, E_k$ be $g$-unit length vectors tangent to $\Sigma$ and $g$-orthogonal to $\mathbf{T}$. 
For a vector $\mathbf{V}\in T_p M$ with $p\in \Sigma$, let us denote 
$$
\mathbf{V}^{\tilde{N}}=\mathbf{V}^N-g(\mathbf{V}^N, \mathbf{T}) \frac{\mathbf{T}^{N}}{|\mathbf{T}^{N}|_g^2}=\sum_{i=1}^l g(\mathbf{V}, N_i)N_i.
$$
So $\mathbf{V}^{\tilde{N}}$ is both $h$ and $g$-orthogonal to $\Sigma$ and $\mathbf{T}$. In particular, 
$$g(\mathbf{V}^{\tilde{N}}, \mathbf{T}^{\hat{N}})=g(\mathbf{V}^{{N}}, \mathbf{T}^{\hat{N}})=g(\mathbf{V}^{\tilde{N}}, \mathbf{T})=0.$$

\begin{prop}\label{RankOne2ndFFProp}
	Let $(M,g)$ be a Riemannian manifold and $h$ a Riemannian metric obtained as a rank one deformation of $g$ by the one form $\tau=g(\mathbf{T}, \cdot)$.  Suppose that
	$$
	\nabla_Z^g \mathbf{T} = -\mathbf{a}(Z)
	$$
	where  $\mathbf{a}$ is a $(1,1)$-tensor satisfying
	$$
	g(\mathbf{a}(X), Y)=-g(X, \mathbf{a}(Y)). 
	$$
For a submanifold $\Sigma\subset M$ one has the following relationship between the second fundamental form of $\Sigma$ with respect to $g$ and $h$
$$
(	\mathbf{A}_\Sigma^h(X,Y))^{\tilde{N}}= (\mathbf{A}_\Sigma^g(X,Y))^{\tilde{N}}- g(\mathbf{T}, Y) (\mathbf{a}(X))^{{\tilde{N}}} -g(\mathbf{T}, X) (\mathbf{a}(Y))^{{\tilde{N}}}
$$
and
\begin{align*}
	g(\mathbf{A}_{\Sigma}^h(X,Y), \mathbf{T}^{\hat{N}})&=g( \mathbf{A}_{\Sigma}^g(X,Y), \mathbf{T})\\
	&-\frac{1+|\mathbf{T}^\top|^2_g}{1+|\mathbf{T}|_g^2}\big( g(\mathbf{T}^{\hat{N}}, \mathbf{a}(X))g(\mathbf{T}, Y)	+  g(\mathbf{T}^{\hat{N}},\mathbf{a}(Y))g(\mathbf{T}, X)\big).  
\end{align*}
\end{prop}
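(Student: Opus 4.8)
The plan is to reduce the statement to the difference tensor $C(X,Y)=\nabla^h_XY-\nabla^g_XY$ produced in Appendix \ref{RankOneSec}, which satisfies $h(C(X,Y),Z)=c(X,Y,Z)$ for the explicit trilinear expression $c$ given there. The first step, which drives everything, is to simplify $c$ using the hypothesis $\nabla^g_Z\mathbf{T}=-\mathbf{a}(Z)$. Since $\alpha\equiv 1$ one has $(\nabla^g_U\tau)(V)=g(\nabla^g_U\mathbf{T},V)=-g(\mathbf{a}(U),V)$, and substituting this into the six terms of $c$ and invoking the antisymmetry $g(\mathbf{a}(X),Y)=-g(X,\mathbf{a}(Y))$ collapses them: the two terms symmetric in $X,Y$ cancel and the remaining four combine in pairs, leaving
\[
c(X,Y,Z)=-g(\mathbf{T},Y)\,g(\mathbf{a}(X),Z)-g(\mathbf{T},X)\,g(\mathbf{a}(Y),Z).
\]
This already displays the shape of the correction terms in both formulas.

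Next I would write $\mathbf{A}^h_\Sigma(X,Y)=(\nabla^h_XY)^{N_h}$ and use $\nabla^h_XY=\nabla^{\Sigma,g}_XY+\mathbf{A}^g_\Sigma(X,Y)+C(X,Y)$. Because the intrinsic term $\nabla^{\Sigma,g}_XY$ lies in $T_p\Sigma$, a subspace independent of the metric, its $h$-normal part vanishes, so $\mathbf{A}^h_\Sigma=(\mathbf{A}^g_\Sigma)^{N_h}+(C)^{N_h}$. For the first identity I pair against the frame $N_1,\dots,N_l$: each $N_i$ is $g$-orthogonal to $\mathbf{T}$, hence $h(\,\cdot\,,N_i)=g(\,\cdot\,,N_i)$, giving $g(\mathbf{A}^h_\Sigma,N_i)=g(\mathbf{A}^g_\Sigma,N_i)+c(X,Y,N_i)$. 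Summing $\sum_i(\,\cdot\,)N_i$ and reading off $c(X,Y,N_i)=-g(\mathbf{T},Y)g(\mathbf{a}(X),N_i)-g(\mathbf{T},X)g(\mathbf{a}(Y),N_i)$ recovers $(\mathbf{A}^h_\Sigma(X,Y))^{\tilde N}=(\mathbf{A}^g_\Sigma(X,Y))^{\tilde N}-g(\mathbf{T},Y)(\mathbf{a}(X))^{\tilde N}-g(\mathbf{T},X)(\mathbf{a}(Y))^{\tilde N}$. Since the $N_i$ behave identically under the two metrics, this part is pure bookkeeping once $c$ is in hand.

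The $\mathbf{T}^{\hat N}$ identity is the delicate part, and I expect it to be the main obstacle: $\mathbf{T}^{\hat N}$ is $h$-normal but not $g$-normal, so the $g$- and $h$-pairings against it differ and must be tracked separately. My plan is to compute the clean $h$-pairing first. Writing $\mathbf{T}^{\hat N}=\mathbf{T}-\frac{1+|\mathbf{T}|^2_g}{1+|\mathbf{T}^\top|^2_g}\mathbf{T}^\top$ and expanding $h=g+\tau\otimes\tau$, I would evaluate $h(\nabla^g_XY,\mathbf{T}^{\hat N})$; the key cancellation is that the coefficient multiplying the purely tangential quantity $g(\nabla^g_XY,\mathbf{T}^\top)$ reduces to $1+\frac{|\mathbf{T}^N|^2_g}{1+|\mathbf{T}^\top|^2_g}-\frac{1+|\mathbf{T}|^2_g}{1+|\mathbf{T}^\top|^2_g}=0$, leaving only a multiple of $g(\mathbf{A}^g_\Sigma,\mathbf{T})$. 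Adding $h(C,\mathbf{T}^{\hat N})=c(X,Y,\mathbf{T}^{\hat N})$ then yields $h(\mathbf{A}^h_\Sigma,\mathbf{T}^{\hat N})$ in closed form. Finally I convert to the desired pairing by expanding $\mathbf{A}^h_\Sigma$ in the $h$-orthogonal normal frame $\{N_1,\dots,N_l,\mathbf{T}^{\hat N}\}$ and normalizing by $|\mathbf{T}^{\hat N}|^2_h=\frac{1+|\mathbf{T}|^2_g}{1+|\mathbf{T}^\top|^2_g}|\mathbf{T}^N|^2_g$; this norm ratio is exactly what supplies the prefactor $\frac{1+|\mathbf{T}^\top|^2_g}{1+|\mathbf{T}|^2_g}$ on the $\mathbf{a}$-terms. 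The points requiring care are the auxiliary identities $g(\mathbf{T},\mathbf{T}^{\hat N})=|\mathbf{T}^N|^2_g/(1+|\mathbf{T}^\top|^2_g)$ and the above norm, together with the decomposition $\mathbf{T}^N=\mathbf{T}^{\hat N}+\frac{|\mathbf{T}^N|^2_g}{1+|\mathbf{T}^\top|^2_g}\mathbf{T}^\top$, which exhibits $\mathbf{T}^{\hat N}$ as the $h$-normal projection of $\mathbf{T}^N$ and keeps the two normal decompositions aligned so that the spurious tangential terms cancel.
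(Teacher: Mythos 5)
Your simplification of the difference tensor to
\[
c(X,Y,Z)=-g(\mathbf{T},Y)\,g(\mathbf{a}(X),Z)-g(\mathbf{T},X)\,g(\mathbf{a}(Y),Z)
\]
is correct, and your derivation of the first formula by pairing with the $N_i$ is exactly the paper's argument. For the second formula your route to the key intermediate identity is genuinely different and cleaner: the paper writes $\mathbf{T}^{\hat N}=\mathbf{T}-\tfrac{1+|\mathbf{T}|_g^2}{1+|\mathbf{T}^\top|_g^2}\mathbf{T}^\top$ and treats $h_\Sigma$ as a rank one deformation of $g_\Sigma$, which forces a lengthy computation relating $c_\Sigma(X,Y,\mathbf{T}^\top)$ back to $c$ and to $\mathbf{A}^g_\Sigma$, whereas your direct evaluation of $h(\nabla^g_XY,\mathbf{T}^{\hat N})$, with the cancellation of the coefficient of the tangential term that you identify, reaches the same statement
\[
h(\nabla^h_XY,\mathbf{T}^{\hat N})=\tfrac{1+|\mathbf{T}|_g^2}{1+|\mathbf{T}^\top|_g^2}\,g(\mathbf{A}^g_\Sigma(X,Y),\mathbf{T})-g(\mathbf{T},Y)g(\mathbf{a}(X),\mathbf{T}^{\hat N})-g(\mathbf{T},X)g(\mathbf{a}(Y),\mathbf{T}^{\hat N})
\]
in a few lines.

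The gap is in your final conversion. Expanding $\mathbf{A}^h_\Sigma$ in the frame $\{N_1,\dots,N_l,\mathbf{T}^{\hat N}\}$ and using $g(N_i,\mathbf{T}^{\hat N})=0$ gives
\[
g(\mathbf{A}^h_\Sigma(X,Y),\mathbf{T}^{\hat N})=\frac{|\mathbf{T}^{\hat N}|^2_g}{|\mathbf{T}^{\hat N}|^2_h}\,h(\nabla^h_XY,\mathbf{T}^{\hat N}),
\]
so the relevant ratio is $|\mathbf{T}^{\hat N}|^2_g/|\mathbf{T}^{\hat N}|^2_h$, not $|\mathbf{T}^N|^2_g/|\mathbf{T}^{\hat N}|^2_h$. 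One computes $|\mathbf{T}^{\hat N}|^2_g=|\mathbf{T}^N|^2_g\bigl(1+\tfrac{|\mathbf{T}^\top|^2_g|\mathbf{T}^N|^2_g}{(1+|\mathbf{T}^\top|^2_g)^2}\bigr)$, so your norm ratio produces the prefactor $\tfrac{1+|\mathbf{T}^\top|^2_g}{1+|\mathbf{T}|^2_g}$ only when $|\mathbf{T}^\top|_g|\mathbf{T}^N|_g=0$. What your computation honestly yields is
\[
(1+|\mathbf{T}^\top|^2_g)\,g(\mathbf{A}^h_\Sigma(X,Y),\mathbf{T})=\tfrac{1+|\mathbf{T}^\top|^2_g}{1+|\mathbf{T}|^2_g}\,h(\mathbf{A}^h_\Sigma(X,Y),\mathbf{T}^{\hat N})=\mathrm{RHS},
\]
which agrees with the displayed left-hand side $g(\mathbf{A}^h_\Sigma(X,Y),\mathbf{T}^{\hat N})$ only in those degenerate cases; a test with $g$ Euclidean on $\Real^3$, $\mathbf{T}=\partial_z$, $\mathbf{a}=0$ and $\Sigma$ a tilted parabolic graph confirms the discrepancy. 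Be aware that the paper's own last line makes the same identification, replacing $g(\mathbf{T},\mathbf{A}^h_\Sigma)$ by $g(\mathbf{T}^{\hat N},\mathbf{A}^h_\Sigma)$, which fails because $\mathbf{A}^h_\Sigma$ is $h$-normal but not $g$-orthogonal to $\mathbf{T}^\top$. Since the two sides differ by a positive factor, the later applications (which only use the vanishing of this quantity for minimal $\Sigma$) are unaffected, but your write-up should either target the corrected left-hand side above or carry the extra factor explicitly.
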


\begin{proof}
The computations of Section \ref{RankOneSec} yield, for $X$ and $Y$ tangent to $\Sigma$, and $N_j$ $g$-orthogonal to $\Sigma$ and $\mathbf{T}$,
\begin{align*}
	h(\nabla_X^h Y&, N_j)=g(\nabla_X^g Y, N_j)+c(X,Y, N_j)\\
	&= g(\mathbf{A}_\Sigma^g(X,Y), N_j)+\frac{1}{2} g(\nabla_X^g \mathbf{T}, N_j) g(\mathbf{T},Y)+\frac{1}{2} g(\nabla_Y^g \mathbf{T}, N_j) g(\mathbf{T},X)\\
	&-\frac{1}{2} g(\nabla_{N_j}^g  \mathbf{T}, X) g(\mathbf{T},Y)-\frac{1}{2} g(\nabla_{N_j}^g  \mathbf{T}, Y) g(\mathbf{T},X).                    
\end{align*}
The additional hypothesis on $\mathbf{T}$ and $\mathbf{a}$ imply
\begin{align*}
		h(\nabla_X^h Y, N_j)&= g(\mathbf{A}_\Sigma^g(X,Y), N_j)- g(\mathbf{a}(X), N_j) g(\mathbf{T},Y)- g(\mathbf{a}(Y), N_j) g(\mathbf{T},X).
\end{align*}
This immediately yields the first formula.

Observe that
\begin{align*}
	h(\nabla_X^h Y, \mathbf{T}^{\hat{N}})
	&=h(\nabla_X^h Y, \mathbf{T})- \frac{1+|\mathbf{T}|_g^2}{1+|\mathbf{T}^\top|_g^2} h(\nabla_X^{\Sigma,h} Y, \mathbf{T}^\top).
\end{align*}
It directly follows from Section \ref{RankOneSec} that
\begin{align*}
	h(\nabla_X^h Y, \mathbf{T})&=(1+|\mathbf{T}|^2_g) g(\nabla_X^g Y, \mathbf{T})+c(X,Y, \mathbf{T})
\end{align*}
where
\begin{align*}
	c(X,Y,\mathbf{T})&=\frac{1}{2} \big(g(\nabla^g_X  \mathbf{T}, Y)|\mathbf{T}|^{2}_g+g(\nabla^g_Y \mathbf{T}, X)|\mathbf{T}|^{2}_g+  g(\nabla_X^g \mathbf{T}, \mathbf{T})g(\mathbf{T}, Y)\\ 
	&	+  g(\nabla_Y^g \mathbf{T}, \mathbf{T})g(\mathbf{T}, X)-g(\nabla_{\mathbf{T}}^g  \mathbf{T}, X)g(\mathbf{T}, Y)- g(\nabla_{\mathbf{T}}^g  \mathbf{T}, Y)g(\mathbf{T},X) \big).
\end{align*}	
Likewise, treating $h_\Sigma$ as a rank one deformation of $g_\Sigma$ one has
\begin{align*}
	h(\nabla_X^{\Sigma,h} Y, \mathbf{T}^\top)&=(1+|\mathbf{T}^\top|_g^2) g(\nabla_X^{\Sigma,g} Y, \mathbf{T}^\top)+c_\Sigma(X,Y,\mathbf{T}^\top)\\
 &=	(1+|\mathbf{T}^\top|_g^2) g(\nabla_X^{\Sigma,g} Y, \mathbf{T})+c_\Sigma(X,Y,\mathbf{T}^\top)
\end{align*}
where the definition and properties of the connection yield
\begin{align*}
 c_\Sigma&(X,Y, \mathbf{T}^\top)\\
 &  =\frac{1}{2} \big(g(\nabla^{\Sigma,g}_X  \mathbf{T}^\top, Y)|\mathbf{T}^\top|^{2}_g+g(\nabla^{\Sigma,g}_Y \mathbf{T}^\top, X)|\mathbf{T}^\top|^{2}_g+  g(\nabla_X^{\Sigma,g} \mathbf{T}^\top, \mathbf{T}^\top)g(\mathbf{T}^\top, Y)\\ 
 &	+  g(\nabla_Y^{\Sigma,g} \mathbf{T}^\top, \mathbf{T}^\top)g(\mathbf{T}^\top, X)-g(\nabla_{\mathbf{T}^\top}^{\Sigma,g}  \mathbf{T}^\top, X)g(\mathbf{T}^\top, Y)- g(\nabla_{\mathbf{T}^\top}^{\Sigma,g}  \mathbf{T}^\top, Y)g(\mathbf{T}^\top,X) \big)\\
&= \frac{1}{2} \big(g(\nabla^g_X  \mathbf{T}^\top, Y)|\mathbf{T}^\top|^{2}_g+g(\nabla^g_Y \mathbf{T}^\top, X)|\mathbf{T}^\top|^{2}_g+ g(\nabla_X^{g} \mathbf{T}^\top, \mathbf{T}^\top)g(\mathbf{T}^\top, Y)\\ 
&	+  g(\nabla_Y^{g} \mathbf{T}^\top, \mathbf{T}^\top)g(\mathbf{T}^\top, X)-g(\nabla_{\mathbf{T}^\top}^{g}  \mathbf{T}^\top, X)g(\mathbf{T}^\top, Y)- g(\nabla_{\mathbf{T}^\top}^{g}  \mathbf{T}^\top, Y)g(\mathbf{T}^\top,X) \big).
\end{align*}
For tangent vectors $V,W$ one has
$$
g(\nabla_V^g \mathbf{T}^\top, W) =g(\nabla_V^g (\mathbf{T}-\mathbf{T}^N), W)=g(\nabla_V^g \mathbf{T}, W)+g(\mathbf{T}, A_{\Sigma}^g(V,W)).
$$
Hence,
\begin{align*}
c_\Sigma&(X,Y, \mathbf{T}^\top)\\
&=\frac{1}{2} \big(g(\nabla^g_X  \mathbf{T}, Y)|\mathbf{T}^\top|^{2}_g+g(\nabla^g_Y \mathbf{T}, X)|\mathbf{T}^\top|^{2}_g+  g(\nabla_X^{g} \mathbf{T}, \mathbf{T}^\top)g(\mathbf{T}, Y)\\ 
&	+  g(\nabla_Y^{g} \mathbf{T}, \mathbf{T}^\top)g(\mathbf{T}, X)-g(\nabla_{\mathbf{T}^\top}^{g}  \mathbf{T}, X)g(\mathbf{T}, Y)- g(\nabla_{\mathbf{T}^\top}^{g}  \mathbf{T}, Y)g(\mathbf{T},X) \big)\\
&+\frac{1}{2} \big(g( \mathbf{T}, \mathbf{A}_{\Sigma}(X, Y))|\mathbf{T}^\top|^{2}_g+g( \mathbf{T}, \mathbf{A}_{\Sigma}(Y, X))|\mathbf{T}^\top|^{2}_g+  g( \mathbf{T}, \mathbf{A}_{\Sigma}(X,\mathbf{T}^\top))g(\mathbf{T}, Y)\\ 
&	+  g(\mathbf{T}, \mathbf{A}_{\Sigma}(Y, \mathbf{T}^\top))g(\mathbf{T}, X)-g(  \mathbf{T},\mathbf{A}_\Sigma( X,\mathbf{T}^\top))g(\mathbf{T}, Y))- g(\mathbf{T}, \mathbf{A}_{\Sigma}(Y, \mathbf{T}^\top))g(\mathbf{T},X) \big)\\
&=\frac{1}{2} \big(g(\nabla^g_X  \mathbf{T}, Y)|\mathbf{T}^\top|^{2}_g+g(\nabla^g_Y \mathbf{T}, X)|\mathbf{T}^\top|^{2}_g+  g(\nabla_X^{g} \mathbf{T}, \mathbf{T}^\top)g(\mathbf{T}, Y)\\ 
&	+  g(\nabla_Y^{g} \mathbf{T}, \mathbf{T}^\top)g(\mathbf{T}, X)-g(\nabla_{\mathbf{T}^\top}^{g}  \mathbf{T}, X)g(\mathbf{T}, Y)- g(\nabla_{\mathbf{T}^\top}^{g}  \mathbf{T}, Y)g(\mathbf{T},X) \big)\\
&+g( \mathbf{T}^N, \mathbf{A}_{\Sigma}(X, Y))|\mathbf{T}^\top|^{2}_g.
\end{align*}
Putting everything together yields
\begin{align*}
	h(\nabla_X^h Y,& \mathbf{T}^{\hat{N}})=	(1+|\mathbf{T}|^2_g)g(\mathbf{T}, \mathbf{A}_\Sigma^g(X,Y)+c(X,Y, \mathbf{T})-\frac{1+|\mathbf{T}|_g^2}{1+|\mathbf{T}^\top|_g^2}c_\Sigma(X,Y, \mathbf{T}^\top)\\
	&=\frac{1+|\mathbf{T}|^2_g}{1+|\mathbf{T}^\top|_g^2}	g(\mathbf{T}, \mathbf{A}_{\Sigma}^g (X,Y)) +\frac{|\mathbf{T}^{N}|^{2}_g}{2(1+|\mathbf{T}^\top|^2_g)} \big(g(\nabla^g_X  \mathbf{T}, Y)+g(\nabla^g_Y \mathbf{T}, X)\big)\\
	&+ \frac{1}{2}\big( g(\nabla_X^g \mathbf{T}, \mathbf{T}^{\hat{N}})g(\mathbf{T}, Y)	+  g(\nabla_Y^g \mathbf{T}, \mathbf{T}^{\hat{N}})g(\mathbf{T}, X)\\
	&-g(\nabla_{\mathbf{T}^{\hat{N}}}^g  \mathbf{T}, X)g(\mathbf{T}, Y)- g(\nabla_{\mathbf{T}^{\hat{N}}}^g  \mathbf{T}, Y)g(\mathbf{T},X) \big).
\end{align*}
Using the hypotheses on $\mathbf{T}$ and $\mathbf{a}$ yields
\begin{align*}
	h(\nabla_X^h Y, \mathbf{T}^{\hat{N}})&=\frac{1+|\mathbf{T}|^2_g}{1+|\mathbf{T}^\top|_g^2}	g(\mathbf{T}^{\hat{N}}, \mathbf{A}_{\Sigma}^g (X,Y)) \\
	&-\big( g(\mathbf{a}(X), \mathbf{T}^{\hat{N}})g(\mathbf{T}, Y)	+  g(\mathbf{a}(Y), \mathbf{T}^{\hat{N}})g(\mathbf{T}, X)\big)
\end{align*}
where we used $g(\mathbf{T}, \mathbf{A}_{\Sigma}^g (X,Y))=g(\mathbf{T}^{\hat{N}}, \mathbf{A}_{\Sigma}^g (X,Y))$. 

By definition,
$$
h(\mathbf{A}_\Sigma^h(X,Y), \mathbf{T})=h(\mathbf{A}_\Sigma^h(X,Y), \mathbf{T}^{\hat{N}})=	h(\nabla_X^h Y, \mathbf{T}^{\hat{N}}).
$$
This means
\begin{align*}
h(\nabla_X^h Y, \mathbf{T}^{\hat{N}})&=g(\mathbf{A}_\Sigma^h(X,Y), \mathbf{T}^{\hat{N}})+\frac{|\mathbf{T}^N|_g^2}{1+|\mathbf{T}^\top|_g^2}  g(\mathbf{A}_\Sigma^h(X,Y), \mathbf{T}^{\hat{N}})\\
&=\frac{1+|\mathbf{T}|_g^2}{1+|\mathbf{T}^\top|_g^2}g(\mathbf{A}_\Sigma^h(X,Y), \mathbf{T}^{\hat{N}}) .
\end{align*}
Combining this with the previous computation yields the second formula.
\end{proof}

\begin{cor}\label{MCRankOneCor}
	Likewise, the relationship between the mean curvatures is given by
	\begin{equation*}
		(	\mathbf{H}_\Sigma^h)^{\tilde{N}}= (\mathbf{H}_\Sigma^g)^{\tilde{N}}- \frac{(\mathbf{A}_\Sigma^g(\mathbf{T}^\top, \mathbf{T}^\top))^{\tilde{N}} +2  (\mathbf{a}(\mathbf{T}^\top))^{\tilde{N}}}{1+|\mathbf{T}^\top|_g^2} 
	\end{equation*}
	and
	\begin{align*}
		g(\mathbf{H}_{\Sigma}^h, \mathbf{T}^{\hat{N}})&=g( \mathbf{H}_{\Sigma}^g, \mathbf{T})- \frac{g(\mathbf{A}_\Sigma^g(\mathbf{T}^\top, \mathbf{T}^\top), \mathbf{T})}{1+|\mathbf{T}^\top|_g^2}-\frac{2g(\mathbf{a}(\mathbf{T}^\top), \mathbf{T})}{1+|\mathbf{T}|_g^2}.  
	\end{align*}
\end{cor}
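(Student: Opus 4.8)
The plan is to obtain both formulas by taking the appropriate $h_\Sigma$-trace of the two identities for $\mathbf{A}_\Sigma^h$ already established in Proposition \ref{RankOne2ndFFProp}. The one subtlety is that the mean curvature is the trace of the second fundamental form with respect to the \emph{induced} metric $h_\Sigma$, which differs from $g_\Sigma$. Since $h_\Sigma = g_\Sigma + g(\mathbf{T}^\top, \cdot)\otimes g(\mathbf{T}^\top, \cdot)$ is itself a rank one deformation of $g_\Sigma$ in the direction $\mathbf{T}^\top$, I would first record the elementary trace identity that for any symmetric $(0,2)$-tensor $B$ on $T\Sigma$,
\begin{equation*}
\mathrm{tr}_{h_\Sigma} B = \mathrm{tr}_{g_\Sigma} B - \frac{B(\mathbf{T}^\top, \mathbf{T}^\top)}{1+|\mathbf{T}^\top|_g^2}.
\end{equation*}
This follows from the Sherman--Morrison formula for the inverse metric, $h_\Sigma^{-1} = g_\Sigma^{-1} - (1+|\mathbf{T}^\top|_g^2)^{-1}\,\mathbf{T}^\top \otimes \mathbf{T}^\top$, or equivalently by evaluating in a $g_\Sigma$-orthonormal frame adapted to $\mathbf{T}^\top$.

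For the first formula I would apply this identity to $B = (\mathbf{A}_\Sigma^h)^{\tilde{N}}$, using that the projection $(\cdot)^{\tilde{N}}$ commutes with the trace. The first formula of Proposition \ref{RankOne2ndFFProp} gives $\mathrm{tr}_{g_\Sigma}(\mathbf{A}_\Sigma^h)^{\tilde{N}} = (\mathbf{H}_\Sigma^g)^{\tilde{N}} - 2(\mathbf{a}(\mathbf{T}^\top))^{\tilde{N}}$, where the factor of $2$ and the appearance of $\mathbf{a}(\mathbf{T}^\top)$ come from $\sum_a g(\mathbf{T}, E_a)\,\mathbf{a}(E_a) = \mathbf{a}(\mathbf{T}^\top)$ over a $g_\Sigma$-orthonormal frame $\{E_a\}$ (and $\mathbf{H}_\Sigma^g = \mathrm{tr}_{g_\Sigma}\mathbf{A}_\Sigma^g$). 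The correction term requires $(\mathbf{A}_\Sigma^h(\mathbf{T}^\top, \mathbf{T}^\top))^{\tilde{N}}$, which the same identity evaluates using $g(\mathbf{T}, \mathbf{T}^\top) = |\mathbf{T}^\top|_g^2$. Collecting terms, the two contributions proportional to $(\mathbf{a}(\mathbf{T}^\top))^{\tilde{N}}$ combine --- via $1 - |\mathbf{T}^\top|_g^2/(1+|\mathbf{T}^\top|_g^2) = (1+|\mathbf{T}^\top|_g^2)^{-1}$ --- into the claimed coefficient.

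For the second formula I would apply the trace identity to the scalar bilinear form $\psi(X,Y) = g(\mathbf{A}_\Sigma^h(X,Y), \mathbf{T}^{\hat{N}})$ and substitute the second formula of Proposition \ref{RankOne2ndFFProp}. Tracing produces $g(\mathbf{H}_\Sigma^g, \mathbf{T})$ together with terms in $g(\mathbf{T}^{\hat{N}}, \mathbf{a}(\mathbf{T}^\top))$; after combining the $\mathrm{tr}_{g_\Sigma}\psi$ contribution with the $-\psi(\mathbf{T}^\top, \mathbf{T}^\top)/(1+|\mathbf{T}^\top|_g^2)$ correction, the coefficient of $g(\mathbf{T}^{\hat{N}}, \mathbf{a}(\mathbf{T}^\top))$ collapses from $-2(1+|\mathbf{T}^\top|_g^2)/(1+|\mathbf{T}|_g^2)$ to $-2/(1+|\mathbf{T}|_g^2)$, using $-(1+|\mathbf{T}^\top|_g^2)+|\mathbf{T}^\top|_g^2 = -1$. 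The final step is to replace $\mathbf{T}^{\hat{N}}$ by $\mathbf{T}$: since $\mathbf{T}^{\hat{N}} = \mathbf{T} - \frac{1+|\mathbf{T}|_g^2}{1+|\mathbf{T}^\top|_g^2}\mathbf{T}^\top$ and the antisymmetry of $\mathbf{a}$ forces $g(\mathbf{a}(\mathbf{T}^\top), \mathbf{T}^\top)=0$, one has $g(\mathbf{a}(\mathbf{T}^\top), \mathbf{T}^{\hat{N}}) = g(\mathbf{a}(\mathbf{T}^\top), \mathbf{T})$, yielding exactly the stated identity. The main obstacle is purely bookkeeping: keeping track of these several cancellations, since conceptually everything reduces to the single trace identity above together with the antisymmetry of $\mathbf{a}$.
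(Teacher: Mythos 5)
Your proposal is correct and is essentially the paper's own argument: the paper also computes the $h$-trace of the two formulas from Proposition \ref{RankOne2ndFFProp}, using an $h$-orthonormal frame obtained by rescaling the $\mathbf{T}^\top$-direction of a $g$-orthonormal frame, which is exactly the frame-based version of your Sherman--Morrison trace identity, and it closes the second formula with the same antisymmetry observation $g(\mathbf{a}(\mathbf{T}^\top),\mathbf{T}^\top)=0$. The only cosmetic difference is that you isolate the trace identity as a standalone lemma while the paper carries the rescaling factor $2\beta-|\mathbf{T}^\top|_g^2\beta^2=(1+|\mathbf{T}^\top|_g^2)^{-1}$ through the computation directly.
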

\begin{proof}

Let us now compute the mean curvature.
At a point, $p\in \Sigma$, where $\mathbf{T}^\top(p)=\mathbf{0}$, we immediately see that
$$
\mathbf{H}_{\Sigma}^h(p) =  \mathbf{H}_{\Sigma}^g(p).
$$
This verifies both claimed formulas in this case.

When $\mathbf{T}^\top(p)\neq 0$,  we may choose a $g$-orthonormal  basis of $T_p \Sigma$ of the form $\set{E_1'= |\mathbf{T}^\top|_g^{-1}\mathbf{T}^\top,  E_2, \ldots, E_k}$. 
Setting, 
$$ 
E_1=\frac{\mathbf{T}^\top}{|\mathbf{T}^\top|_g\sqrt{1+|\mathbf{T}^\top|_g^2}}=(1-\beta|\mathbf{T}^\top|_g^2) E_1' \mbox{ where } \beta= \frac{1}{1+|\mathbf{T}^\top|_g^2+ \sqrt{ 1+|\mathbf{T}^\top|_g^2}},
$$
we obtain an $h$-orthonormal basis $\set{E_1, \ldots, E_k}$. 
Clearly, $g(\mathbf{T}, E_j)=0$ for $2\leq j\leq k$ and
$$
2\beta-|\mathbf{T}^\top|_g^2\beta^2= \frac{1}{1+|\mathbf{T}^\top|_g^2}.
$$
Using this $h$-orthonormal basis and Proposition \ref{RankOne2ndFFProp} one obtains
\begin{align*}
	(\mathbf{H}_\Sigma^h)^{\tilde{N}} &= (\mathbf{H}_\Sigma^g)^{\tilde{N}} -(2 \beta-|\mathbf{T}^\top|_g^2\beta^2) (\mathbf{A}_\Sigma^g(\mathbf{T}^\top,\mathbf{T}^\top))^{\tilde{N}}-\frac{2(\mathbf{a}(\mathbf{T}^\top))^{\tilde{N}}}{1+ |\mathbf{T}^\top|_g^2} \\
	&=(\mathbf{H}_\Sigma^g)^{\tilde{N}}- \frac{(\mathbf{A}_\Sigma^g(\mathbf{T}^\top, \mathbf{T}^\top))^{\tilde{N}}}{1+|\mathbf{T}^\top|_g^2} -\frac{2}{1+|\mathbf{T}^\top|^2} (\mathbf{a}(\mathbf{T}^\top))^{\tilde{N}}.
\end{align*}
This gives the first equation.

In the same manner, one obtains
\begin{align*}
  g(\mathbf{H}_\Sigma^h, \mathbf{T}^{\hat{N}})&=	 g(\mathbf{H}_\Sigma^h, \mathbf{T})-(2 \beta-|\mathbf{T}^\top|_g^2\beta^2) g(\mathbf{A}_\Sigma^g(\mathbf{T}^\top, \mathbf{T}^\top), \mathbf{T})-\frac{2 g(\mathbf{T}^{\hat{N}}, \mathbf{a}(\mathbf{T}^\top))}{1+|\mathbf{T}|_g^2}\\
  &=g( \mathbf{H}_{\Sigma}^g, \mathbf{T})- \frac{g(\mathbf{A}_\Sigma^g(\mathbf{T}^\top, \mathbf{T}^\top), \mathbf{T})}{1+|\mathbf{T}^\top|_g^2}-\frac{2g(\mathbf{T}, \mathbf{a}(\mathbf{T}^\top))}{1+|\mathbf{T}|_g^2}.
\end{align*} 
Here the second equality used the anti-symmetry of $\mathbf{a}$ to see that
\begin{align*}
g(\mathbf{T}^{\hat{N}}, \mathbf{a}(\mathbf{T}^\top))&= g(\mathbf{T}, \mathbf{a}(\mathbf{T}^\top))-\frac{1+|\mathbf{T}|^2_g}{1+|\mathbf{T}^\top|_g^2} g(\mathbf{T}^\top, \mathbf{a}(\mathbf{T}^\top)) = g(\mathbf{T}, \mathbf{a}(\mathbf{T}^\top)).
\end{align*}
\end{proof}

\bibliographystyle{hamsabbrv}
\bibliography{Library2}
\end{document}